\documentclass[final]{siamart190516}
\usepackage{amsmath}
\usepackage{amsfonts}
\usepackage{subfigure}
\usepackage[algo2e]{algorithm2e}
\usepackage{algorithmic}
\usepackage{array}
\usepackage{url}
\usepackage{hyperref}
\usepackage{url}
\usepackage{bm}
\usepackage{graphicx}
\usepackage{verbatim}
\usepackage{placeins}
\usepackage{multicol}
\usepackage{capt-of}
\usepackage{booktabs}

\DeclareMathOperator{\Tr}{Tr}

\newcommand{\TheTitle}{A template for gradient norm minimization}
\newcommand{\TheAuthors}{Mihai I. Florea}

\newcommand{\defq}{\overset{\operatorname{def}}{=}}
\newcommand{\prox}{\operatorname{prox}}
\newcommand{\inn}[2]{= \overline{#1,#2}}
\newcommand{\on}[1]{\operatorname{#1}}

\newcommand{\Umax}{U_{\on{max}}}

\newcommand{\Lmax}{L_{\on{max}}}

\title{\TheTitle}

\author{Mihai I. Florea\thanks{Department of Mathematical Engineering (INMA), Catholic University of Louvain (UCL), Belgium. E-mail: \email{mihai.florea@uclouvain.be}}.}

\headers{OCGM-G}{\TheAuthors}

\begin{document}

\maketitle

\begin{abstract}
The gradient mapping norm is a strong and easily verifiable stopping criterion for first-order methods on composite problems. When the objective exhibits the quadratic growth property, the gradient mapping norm minimization problem can be solved by online parameter-free and adaptive first-order schemes with near-optimal worst-case rates. In this work we address problems where quadratic growth is absent, a class for which no methods with all the aforementioned properties are known to exist. We formulate a template whose instantiation recovers the existing Performance Estimation derived approaches. Our framework provides a simple human-readable interpretation along with runtime convergence rates for these algorithms. Moreover, our template can be used to construct a quasi-online parameter-free method applicable to the entire class of composite problems while retaining the optimal worst-case rates with the best known proportionality constant. The analysis also allows for adaptivity. Preliminary simulation results suggest that our scheme is highly competitive in practice with the existing approaches, either obtained via Performance Estimation or employing Accumulative Regularization.
\end{abstract}

\section{Introduction}

A very wide range of applications in mathematics, statistics, machine learning and signal processing can be modeled as large-scale composite problems. The size of the problems precludes the computation of second-order information and optimization algorithms may only employ objective function values and gradient-like quantities such as the composite gradient mapping. Two properties render this class tractable: the convexity of the objective, allowing us to find a global optimum, and a local Lipschitz property of the gradient-like quantities enabling acceleration.

Limitations in computational resources such as expended energy or total running time make it desirable to have an accurate quality measure available at runtime for the current estimate of the solution $\hat{x}_k$ at iteration $k$ produced by the optimization scheme. Given an objective function $F(x)$, the gradient-like quantity $g(x)$, the distance $d(x)$ from $x$ to the optimum set $X^*$ and the optimal value $F^*$, the three most commonly used evaluation criteria for $\hat{x}_k$ are the distance to optimum $d(\hat{x}_k)$, the function residual $F(\hat{x}_k) - F^*$ and the norm of the gradient-like quantity $\|g(\hat{x}_k)\|_*$.

Let us examine each metric in turn. In many applications, such as signal restoration tasks, the most desirable criterion is $d(\hat{x}_k)$. However, unless the problem has additional structure, it has been shown in \cite{ref_019} that there exists a worst-case objective function for which no rate on $d(\hat{x}_k)$ can be obtained by \emph{any} optimization method employed. On the other hand, in many loss minimization applications found, for instance, in economics and industrial engineering, the obvious quality measure is the function residual. Whereas the worst-case rate of $\mathcal{O}(1/k^2)$~\cite{ref_019} can be obtained by accelerated schemes such as the Fast Gradient Method~\cite{ref_018} and its generalizations (see, e.g. \cite{ref_020,ref_002,ref_023}), the actual evaluation of this criterion requires the knowledge of $F^*$, which is seldom available.

The remaining alternative is the norm $\|g(\hat{x}_k)\|_*$, being the only criterion among the three that is readily computable at runtime. The optimal rate of $\mathcal{O}(1/k^2)$~\cite{ref_016,ref_017} for minimizing the composite gradient norm on the composite problem class implies the optimal rate for the function residual, rendering the $\|g(\hat{x}_k)\|_*$ criterion the stronger of the two. It can also be very useful on its own. For instance, the gradient norm pertaining to the dual formulation of equality constrained strongly convex minimization measures both the feasibility and the optimality of the primal solution~\cite{ref_021}.

In the context of composite problems, the behavior of optimization schemes is strongly effected by the existence of the quadratic functional growth (QFG)~\cite{ref_015} property, which can be considered a watershed within this class. When the objective exhibits QFG with parameter $\mu > 0$, the three criteria can be minimized optimally simultaneously, that is by the same scheme at the same point, and the worst-case rate for all of them is linear and given by $\mathcal{O}\left(\exp\left(-c \sqrt{\frac{\mu}{L}} k\right)\right)$ with $L$ being the largest local Lipschitz constant encountered and $c$ being an algorithm and criterion dependent constant. The Accelerated Gradient Method with Memory with adaptive restart (R-AGMM) proposed in \cite{ref_007} has virtually all the desirable characteristics needed to address this class: its possesses the aforementioned optimal worst-case rate, it is parameter-free, meaning that it requires no knowledge of the problem parameters, it is adaptive because it dynamically estimates $L$ at every iteration, being further equipped with a runtime convergence guarantee adjustment procedure, and lastly it is an online method that progressively produces better estimates of the solution without the need to specify in advance the number of iterations or an accuracy threshold. With such a collection of properties, R-AGMM essentially solves the problem of composite minimization with quadratic growth.

When quadratic growth is absent, the distance to optimum criterion does not allow a worst-case rate whereas the problem of function residual minimization is solved by the Accelerated Multistep Gradient Scheme (AMGS)~\cite{ref_020}, the Accelerated Composite Gradient Method (ACGM)~\cite{ref_009,ref_010} as well as AGMM~\cite{ref_006,ref_007}, because all aforementioned schemes are optimal, parameter-free, adaptive and online. The challenge remains to develop a method with the four properties that minimizes the gradient mapping norm, a stronger condition.

One of the first works to address this criterion \cite{ref_021} artificially imposes the watershed property: it alters the objective by adding a quadratic regularizer. The regularization parameter depends on a target accuracy of the composite gradient norm rendering this approach offline only. By solving the regularized problem with a version of FGM designed to take advantage of strong convexity, this technique attains a worst-case rate that differs from the optimal one by a logarithmic term in the target accuracy that is inherent in the analysis and cannot be removed.

The first truly optimal approach was proposed in \cite{ref_011} in the form of the Optimized Gradient Method for minimizing the Gradient norm (OGM-G). The derivation of this method was based on the numerical output of the Performance Estimation Problem (PEP)~\cite{ref_004} under the criterion of minimizing the gradient norm for a given initial function residual. A related method is the Optimized Gradient Method~\cite{ref_012}, developed in the same manner but under the different criterion of optimizing the function residual subject to an initial distance to the optimum. Running OGM for a certain $T$ number of iterations, followed by OGM-G for the same number $T$ iterations produces a combination scheme that is optimal and currently has the best known rate on the subproblem of smooth unconstrained minimization~\cite{ref_022}. A quasi-online behavior can be obtained by branching off an instance of OGM-G as a subprocess of OGM every $T$ iterations, with $T$ increasing exponentially. However, even this combination scheme cannot deal with composite problems and needs to be supplied the value of Lipschitz constant $L$ to converge. Also, the opaque nature of the PEP does not provide a theoretical tool for analyzing OGM-G that would facilitate the extension to composite problems or the construction of an adaptive mechanism.

A preliminary study in \cite{ref_003} attempted to derive a Lyapunov (potential) function mathematical tool for achieving the above goals, without success. Interestingly, the method proposed in \cite{ref_003}, which we denote by OGM-G-DW, actually differs from OGM-G and has a worst-case rate constant lower by a factor of $4$.

Another attempt in \cite{ref_014}, seemingly based on Performance Estimation, has managed to produce a valid Lyapunov function for OGM-G. However, this function does not appear to have a clear explanation. To remedy this shortcoming, the work in \cite{ref_014} seeks to find a geometric structure in optimal first-order methods, independent of the Lyapunov analysis. It observes that in FGM the oracle points are obtained from the previous two main iterates by extrapolation. The analysis speculates that since FGM optimally minimizes the function residual, the extrapolation property should be found in a method that optimally minimizes the composite gradient norm as well. By enforcing this assumption in the computer assisted analysis of OGM-G, the authors of \cite{ref_014} formulate FISTA-G, a method optimal under the same criteria as OGM-G, that is further applicable to composite problems. However, FISTA-G also inherits the weaknesses of OGM-G: it relies on $L$ being known and the derivations do not provide a means of adding an adaptive mechanism. Moreover, the computer assisted analysis cannot explain the meaning of the quantities involved in the algorithm updates and in the convergence proof.

A very recent work in \cite{ref_013} addresses the gradient norm minimizationn problem without resorting to Performance Estimation. Instead, it employs Accumulative Regularization (AR), a meta-algorithm that combines the restarting and regularization techniques in \cite{ref_015} and \cite{ref_021}, respectively, to attain an optimal and parameter-free scheme. Successive runs of FGM or a similar accelerated scheme are performed on quadratically regularized objectives for exponentially \emph{decreasing} numbers of iterations, each time exponentially \emph{increasing} the regularization parameter. The regularizer centers are weighted averages of the previous run outputs, the weights being the corresponding regularization parameters. Interestingly, this approach attains an optimal rate without supplying the quadratic growth parameter of the auxiliary problem objectives to the accelerated schemes used to solve them, a technique that resembles the adaptive restart proposed in \cite{ref_001}. This approach is very transparent but the exponentially increasing perturbation of the objective does not leave room for an online mode of operation.

In this work we provide a simple template for minimizing the norm of a gradient-like quantity given an initial function residual. Instantiating this template we recover both OGM-G and FISTA-G, providing a clear explanation for the algorithmic quantities associated with these methods. The template also reveals the quantities these methods are actually minimizing at runtime. Moreover, using the template we can enhance FISTA-G to obtain an Optimized Composite Gradient Method for minimizing the Gradient norm (OCGM-G), and optimal method possessing the currently highest known worst-case rate. Combining ACGM with OCGM-G produces a quasi-online parameter-free method that is fully-adaptive in the ACGM part and allows for adaptivity in the OCGM-G part. Preliminary simulation results show that our approach can outperform the state-of-the-art.

\section{Problem formulations and bounds}

Large-scale composite problems are the main focus of this work but we also need to study the simpler class of smooth unconstrained minimization. Performance Estimation~\cite{ref_004} can only reliably produce numerical instances of optimal methods on this narrower class~\cite{ref_024} and OGM-G~\cite{ref_011} was designed to function exclusively under these restrictive assumptions. Several lessons learned from OGM-G will also apply to composite minimization, as we will see in the sequel. At this stage, we provide the gradient-like quantities, the gradient steps as well as the usable bounds on both classes. All results in this section for composite problems are generalizations of those for smooth unconstrained problems with the exception of the bounds, which are tighter for the smaller class enabling faster convergence.

\subsection{Smooth unconstrained minimization} We first consider the optimization problem
\begin{equation} \label{label_001}
\min_{x \in \mathbb{E}} F(x) = f(x),
\end{equation}
having an objective function $f:\mathbb{E} \rightarrow \mathbb{R}$ that is convex and differentiable on the entire domain $\mathbb{E} \defq \mathbb{R}^n$. The gradient $g = f'$ is $L > 0$ Lipschitz meaning that $f$ is $L$-smooth. Throughout this work we use the Euclidean norm as $\| x \| \defq \sqrt{ \langle B x, x \rangle }$, $x \in \mathbb{E}$, with $B \succ 0$ with the dual norm given by $\| s \|_* \defq \sqrt{ \langle s, B^{-1} s \rangle }$, $s \in \mathbb{E}^*$.

We define the gradient step operator as $T_{\hat{L}}(z) = z - \frac{1}{\hat{L}} B^{-1} f'(z)$ for any estimate $\hat{L} > 0$ of $L$. A bound that completely describes the problem~\cite{ref_024} can be written~\cite{ref_003,ref_008} using the gradient step operator as
\begin{equation} \label{label_002}
f(x) \geq f(y) + \langle f'(y), T_L(x) - y \rangle + \frac{1}{2 L} \| f'(x) \|_*^2 + \frac{1}{2 L} \| f'(y) \|_*^2, \quad x, y \in \mathbb{E}.
\end{equation}

\subsection{Composite minimization}

We next turn our attention to the broader class of composite minimization, formally described as
\begin{equation} \label{label_003}
\min_{x \in \mathbb{E}} F(x) = f(x) + \Psi(x),
\end{equation}
with the properties of $f$ carrying over from the previous problem. We further assume that $L$ may not be known and that the optimization scheme starts with an initial arbitrary guess $L_0 > 0$. The regularizer $\Psi$ is an extended value convex lower-semicontinuous function for which the exact computation of the proximal operator, given by
\begin{equation} \label{label_004}
\prox_{\tau}(x) \defq \arg\min_{z \in \mathbb{E}}\left\{\tau \Psi(z) + \frac{1}{2} \| z - x \|^2 \right\}, \quad x \in \mathbb{E},
\end{equation}
is tractable for all $\tau > 0$. Constraints are imposed by setting the regularizer to be infinite outside the feasible set $Q$. The gradient-like quantity for this class is given by the gradient mapping~\cite{ref_020}
\begin{equation} \label{label_005}
g_{\hat{L}}(x) \defq \hat{L} B^{-1} (x - T_{\hat{L}}(x)), \quad x \in \mathbb{E},
\end{equation}
where $\hat{L} > 0$ and $T_{\hat{L}}$ is the proximal step operator, defined as
\begin{equation} \label{label_006}
T_{\hat{L}}(x) \defq \arg\min_{z \in \mathbb{E}} \left\{ f(x) + \langle f'(x), z - x \rangle + \frac{\hat{L}}{2} \| z - x \|^2 + \Psi(z) \right\}.
\end{equation}
On this class we use the following bounds~\cite{ref_008} that allow for adaptivity.
\begin{proposition} \label{label_007}
For any $y \in \mathbb{E}$ and $\hat{L} > 0$ satisfying the descent condition
\begin{equation} \label{label_008}
f(T_{\hat{L}}(y)) \leq f(y) + \langle f'(y), T_{\hat{L}}(y) - y \rangle + \frac{\hat{L}}{2} \| T_{\hat{L}}(y) - y \|^2,
\end{equation}
we can construct a global lower bound on the objective of the form
\begin{equation} \label{label_009}
F(x) \geq F(T_{\hat{L}}(y)) + \frac{1}{2 \hat{L}} \| g_{\hat{L}}(y) \|_*^2 + \langle g_{\hat{L}}(y), x - y \rangle, \quad x \in \mathbb{E}.
\end{equation}
\end{proposition}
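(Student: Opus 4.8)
The plan is to derive \eqref{label_009} as the composite analogue of the smooth bound \eqref{label_002}: linearize the two pieces of $F$ at suitable base points, add, and invoke the descent condition \eqref{label_008} only at the very end. Write $y^+ \defq T_{\hat{L}}(y)$ and $g \defq g_{\hat{L}}(y)$, using the identification under which $g_{\hat{L}}$ reduces to $f'$ when $\Psi \equiv 0$, i.e. $g = \hat{L} B (y - y^+)$. This yields the two bookkeeping identities
\[
\tfrac{1}{2\hat{L}}\|g\|_*^2 = \tfrac{\hat{L}}{2}\|y - y^+\|^2,
\qquad
\langle g, \, y - y^+ \rangle = \tfrac{1}{\hat{L}}\|g\|_*^2,
\]
which let us pass freely between the compact bound \eqref{label_009} and an equivalent form anchored at $y^+$.

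First I would extract the optimality condition for the proximal model in \eqref{label_006}. Its minimand $z \mapsto f(y) + \langle f'(y), z - y \rangle + \tfrac{\hat{L}}{2}\|z - y\|^2 + \Psi(z)$ is strongly convex, so Fermat's rule gives $0 \in f'(y) + \hat{L} B(y^+ - y) + \partial\Psi(y^+)$; equivalently, $\xi \defq g - f'(y) \in \partial\Psi(y^+)$.

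Next, for arbitrary $x \in \mathbb{E}$ I would combine two linearizations: convexity of $f$ at $y$, $f(x) \ge f(y) + \langle f'(y), x - y \rangle$, and convexity of $\Psi$ together with the subgradient $\xi$, $\Psi(x) \ge \Psi(y^+) + \langle \xi, x - y^+ \rangle$. Adding these and substituting $\xi = g - f'(y)$, the terms carrying $f'(y)$ collapse to $\langle f'(y), y^+ - y \rangle$, so
\[
F(x) \ge \big( f(y) + \langle f'(y), y^+ - y \rangle \big) + \Psi(y^+) + \langle g, \, x - y^+ \rangle .
\]
The descent condition \eqref{label_008} is exactly $f(y) + \langle f'(y), y^+ - y \rangle \ge f(y^+) - \tfrac{\hat{L}}{2}\|y^+ - y\|^2$; plugging it in and writing $f(y^+) + \Psi(y^+) = F(y^+) = F(T_{\hat{L}}(y))$ gives $F(x) \ge F(T_{\hat{L}}(y)) - \tfrac{\hat{L}}{2}\|y^+ - y\|^2 + \langle g, \, x - y^+ \rangle$. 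Finally, using the two identities above to replace $-\tfrac{\hat{L}}{2}\|y^+-y\|^2$ by $-\tfrac{1}{2\hat{L}}\|g\|_*^2$ and $\langle g, x - y^+ \rangle$ by $\langle g, x - y \rangle + \tfrac{1}{\hat{L}}\|g\|_*^2$ turns the right-hand side into precisely $F(T_{\hat{L}}(y)) + \tfrac{1}{2\hat{L}}\|g\|_*^2 + \langle g, x - y\rangle$, which is \eqref{label_009}.

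I do not expect a genuine obstacle; this is the standard gradient-mapping lower-bound argument. The two places that need care are the subdifferential calculus for the nonsmooth $\Psi$ --- confirming that the unique proximal minimizer carries the subgradient $\xi = g - f'(y)$ --- and the primal/dual bookkeeping with $B$, so that $\|g\|_*^2 = \hat{L}^2\|y - y^+\|^2$ holds exactly and the two identities are not merely approximate. It is also worth noting that \eqref{label_008} is used at a single step, and that when $\hat{L} \ge L$ it holds automatically by $L$-smoothness of $f$, which is the source of the claimed adaptivity.
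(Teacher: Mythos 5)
Your proof is correct and is exactly the standard gradient-mapping lower-bound argument (Fermat's rule on the proximal subproblem to extract the subgradient $\xi = g_{\hat{L}}(y) - f'(y) \in \partial\Psi(T_{\hat{L}}(y))$, two convexity linearizations, then the descent condition), which is the argument the paper itself defers to its citation of \cite{ref_008} rather than reproducing. Your bookkeeping with $B$ is also the right reading: the working definition $g_{\hat{L}}(y) = \hat{L}B(y - T_{\hat{L}}(y))$ used in the algorithms (rather than the $B^{-1}$ appearing in \eqref{label_005}) is what makes $\|g_{\hat{L}}(y)\|_*^2 = \hat{L}^2\|y - T_{\hat{L}}(y)\|^2$ exact and the final cancellation come out to $+\frac{1}{2\hat{L}}\|g_{\hat{L}}(y)\|_*^2$.
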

For both problems we assume the optimal set $X^*$ of points $x^*$ is non-empty.

\section{Revisiting quadratic growth} \label{label_010}

Let us first show that the problem of composite gradient norm minimization when the objective \emph{does} exhibit quadratic growth is essentially solved. For conciseness, we omit proofs in this section. For an extensive presentation with complete proofs we refer the reader to \cite[Section 3.3]{ref_007}.

\subsection{Definition and properties}
For a growth parameter $\mu \geq 0$, the Quadratic Functional Growth (QFG) property is stated as
\begin{equation} \label{label_011}
F({x}) \geq F^* + \frac{\mu}{2} d({x}_0)^2, \quad {x} \in \mathbb{E} ,
\end{equation}
where $\pi({x})$ and $d({x})$ are the projection onto and the distance to the optimal set $X^*$, respectively defined as
\begin{equation}
\pi({x}) \defq \displaystyle \arg\min_{{x^*} \in X^*}\|{x^*} - {x}\|, \quad d({x}) \defq \| {x} - \pi({x}) \|, \quad {x} \in \mathbb{E}.
\end{equation}
A link between the subgradient norm and the function residual is provided by a variation of the Polyak-\L{}ojasiewicz inequality with the tight constant $\frac{2}{\mu}$, stated as
\begin{equation}
F(x) - F^* \leq \frac{2}{\mu} \| \xi \|_*^2, \quad x \in \mathbb{E}, \quad \xi \in \partial F(x).
\end{equation}

\subsection{A simple method with all the key strengths}
We describe and analyze in the following ACGM with adaptive restart (R-ACGM), one of the simplest optimal online adaptive parameter-free methods addressing this class. R-ACGM is a two-level scheme wherein the wrapper restart meta-algorithm (Algorithm~\ref{label_012}) repeatedly calls instances of ACGM (Algorithm~\ref{label_014}).

\begin{algorithm}[h]
\footnotesize
\caption{A parameter-free adaptive restart scheme calling ACGM}
\label{label_012}
\begin{algorithmic}[1]
\STATE \textbf{Input:} $r_0 \in \mathbb{R}^n$, $0 < \sigma < \frac{1}{2}$, $s > 1$, $L_0 > 0$, $\gamma_d \in (0, 1]$, $\gamma_u > 1$
\STATE $(r_1, U_1, n_1) = \on{ACGM}(r_0, +\infty, E_0, L_0, \gamma_d, \gamma_u)$ \hspace{1em} \#Reference step \label{label_013}
\STATE $\bar{U}_1 := U_1$
\FOR{$j = 1, 2, \ldots{}$}
\STATE $(r_{j + 1}, U_{j + 1}, n_{j + 1}) = \on{ACGM}(r_j, \bar{U}_j, E_j, L_0, \gamma_d, \gamma_u)$
\IF {$E_j$ holds}
\STATE \textbf{break}
\ENDIF
\IF {$F(r_{j}) - F(r_{j + 1}) \leq \frac{\sigma}{1 - \sigma} (F(r_{j - 1}) - F(r_{j}))$}
\STATE $\bar{U}_{j + 1} := \bar{U}_j$ \hspace{1em} \# Normal step
\ELSE
\STATE $\bar{U}_{j + 1} := s \cdot \bar{U}_j$ \hspace{1em} \# Adjustment step
\ENDIF
\ENDFOR
\end{algorithmic}
\end{algorithm}

\begin{algorithm}[h]
\footnotesize
\caption{$(r_{j + 1}, U_{j + 1}, n_{j + 1}) = \on{ACGM}(r_j, \bar{U}_j, E_j, L_0, \gamma_d, \gamma_u)$}
\label{label_014}
\begin{algorithmic}[1]
\STATE $x_0 = v_0 = r_j$, $A_0 = 0$, $k := 0$
\REPEAT
\STATE $L_{k + 1} := \gamma_d L_k$
\LOOP
\STATE $a_{k + 1} := \frac{1 + \sqrt{1 + 4 L_{k + 1} A_k}}{2 L_{k + 1}}$
\STATE $y_{k + 1} := \frac{1}{A_k + a_{k + 1}}(A_k {x}_k + a_{k + 1} {v}_k)$
\STATE ${x}_{k + 1} := T_{L_{k + 1}}(y_{k + 1})$
\IF {$f({x}_{k + 1}) \leq f(y_{k + 1}) + \langle f'(y_{k + 1}), {x}_{k + 1} - y_{k + 1} \rangle + \frac{L_{k + 1}}{2} \| {x}_{k + 1} - y_{k + 1} \|^2$}
\STATE Break from loop
\ELSE
\STATE $L_{k + 1} := \gamma_u L_{k + 1}$
\ENDIF
\ENDLOOP
\STATE $A_{k + 1} = A_k + a_{k + 1}$
\STATE $v_{k + 1} = v_k + a_{k + 1} L_{k + 1} (x_{k + 1} - y_{k + 1})$
\STATE $k := k + 1$
\UNTIL $A_{k} \geq \bar{U}_j$ or $E_j$ holds
\STATE $r_{j + 1} = \arg\min\{F(x_{k}), F(x_1)\}$
\STATE $U_{j + 1} = A_{k}$
\STATE $n_{j + 1} = k$
\end{algorithmic}
\end{algorithm}

The wrapper scheme takes as input the starting point $r_0$, which needs to be feasible. If uncertain, a starting point can be made feasible by either projecting it onto $Q$ or by calling $\prox_{\tau}(r_0)$ with arbitrary $\tau_0 > 0$. The scalar $\sigma$ can be either the value $\sigma^* \approx 0.0981709$, shown in \cite{ref_007} to be optimal, or the value $\sigma = e^{-2}$, found to work well in practice. The parameter $s$ multiplicatively increases the threshold $\bar{U}_j$ at wrapper iteration $j \geq 1$, if necessary, and experiments show that a good choice is $s = 4$. This value is also optimal when dealing with a resisting oracle. The initial $L_0$ can be set to $1$ unless a better guess of $L$ is available. The line-search parameters $\gamma_d$ and $\gamma_u$ can be set to $0.9$ and $2.0$, respectively, as per original ACGM specification~\cite{ref_009}. We denote by $L(r_j)$ the Lipschitz estimate $\hat{L}$ returned by the line-search procedure of ACGM for each evaluated point $y = r_j$ satisfying \eqref{label_008} in Proposition~\ref{label_007}.

The heuristic $E_0$ should ensure that $U_1 = \bar{U}_1 < s \Umax$ with $\Umax \defq \frac{1}{\mu \sigma}$. One such criterion is described in \cite{ref_001} and returns $U_1 = A_k$ with $k$ being the first ACGM iteration satisfying
\begin{equation} \label{label_015}
F(x_m) - F(x_k) \leq \frac{\sigma}{1 - \sigma} ( F(x_0) - F(x_m) ),\quad m = \left\lceil \frac{k}{\sqrt{s}} \right\rceil \geq 1.
\end{equation}

The termination criterion $E_j$, $j \geq 1$, can be triggered by the spent computational budget, which can be the total number of ACGM iterations, total running time, total expended energy or any other similar constraint.

\subsection{Convergence analysis}

The worst-case rate of ACGM in Algorithm~\ref{label_014} is, according to \cite[Appendix~E]{ref_009}, given by
\begin{equation} \label{label_016}
F({x}_k) - F^* \leq \frac{1}{2 A_k} d({x}_0)^2, \quad k \geq 1,
\end{equation}
with $A_k$ lower bounded as $A_k \geq \frac{(k + 1)^2}{4 L_u}$, where $L_u \defq \max\{\gamma_d L_0, \gamma_u L\}$ is the worst-case Lipschitz estimate. We can thus approximate $A_k$ from below using the monomial term $c k^p$ with $c = \frac{1}{4 L_u}$ and $p = 2$.

The heuristic $E_0$ ensures that $k \leq \lceil \sqrt{4 L_u s \Umax}\rceil$ (see \cite[Lemma 3.4]{ref_007}) and the worst-case overall rate for R-ACGM in ACGM iterations $N$ is given by
\begin{equation} \label{label_017}
\begin{gathered}
d(x_N) \leq \sqrt{\frac{2}{\mu} (F({x}_N) - F^*)}, \quad
F({x}_N) - F^* \leq \frac{2}{\mu} \left( 1 + \frac{L_f}{L({r}_j)} \right)^2 \| g_{L({r}_j)}({r}_j) \|_*^2, \\
\| g_{L({r}_j)}({r}_j) \|_*^2 \leq 2L({r_{j}}) C_b
\exp{\left(-\eta(\sigma) \frac{1}{e} \sqrt{\frac{\mu}{s L_u}} (N + N_o)\right)} \left( F({r}_0) - F({r}_1) \right),
\end{gathered}
\end{equation}
where $j$ marks the last restart before $N$ iterations have elapsed, the adjustment overhead is given by $C_b \defq \left(\frac{1 - \sigma}{\sigma} \right)^{b + 1} \prod_{i \in \mathbb{B}} \frac{F(r_i) - F(r_{i + 1})}{F(r_{i - 1}) - F(r_i)}$, with the total number of adjustments $b \leq \left\lceil log_s\left(\frac{U_{\operatorname{max}}}{U_1}\right)\right\rceil$ and the iteration efficiency $\eta(\sigma) \in (0,1)$, defined as the efficiency with respect to the case when $\mu$ is known, has the expression
\begin{equation} \label{label_018}
\eta(\sigma) \defq -\frac{e}{2} \sqrt{\sigma} \ln\left(\frac{\sigma}{1 - \sigma}\right) , \quad \sigma \in \left(0, \frac{1}{2} \right).
\end{equation}
The total overhead gain in iterations satisfies $N_o \defq (b + 1) \tilde{n} - (N_r + N_b + N_t) \geq 0$, with the maximal number of iterations between successive restarts given by $\tilde{n} = \left\lfloor 2 \sqrt{ \frac{s L_u}{\mu \sigma} }\right\rfloor$. The number of iterations pertaining to the first instance of ACGM, the adjustments as well as those elapsed from the last restart are, respectively, given by $N_r \leq \tilde{n} {s}^{-\frac{b}{2}}$, $N_b \leq \tilde{n} \cdot{} \min\left\{b, \frac{\sqrt{s}}{\sqrt{s} - 1}\right\}$ and $N_t \leq \tilde{n}$. The value $\sigma^*$ maximizes the efficiency with $\eta(\sigma^*) \approx 0.9444$ whereas $\sigma = e^{-2}$ gives $\eta(e^{-2}) \approx 0.9273$, both efficiency levels being very close to $1$.

We thus see in \eqref{label_017} that all three convergence criteria are minimized optimally at the same point by the online parameter-free adaptive scheme R-ACGM described in Algorithms \ref{label_012} and \ref{label_014}.

\section{A template for gradient norm minimization given an initial function residual}

We now turn our attention to the more difficult scenario where quadratic growth is too small to be effectively harnessed and analyze the common aspects of smooth unconstrained and composite minimization.

We denote the points where the oracle is called by $y_k$, $k \geq 1$. We consider that for each oracle point, the algorithm obtains a Lipschitz estimate $L_k$, $k \geq 1$, with $L_0$ being an initial guess for $L$, as well as the gradient-like quantity
\begin{equation} \label{label_019}
g_k \defq g_{L_k}(y_k), \quad k \geq 1.
\end{equation}
We have $y_1 = x_0$, where $x_0$ is the initial point, and the gradient step points $x_k$, $k \geq 1$, are respectively given by
\begin{equation} \label{label_020}
x_{k} \defq T_{L_{k}}(y_{k}) \overset{\eqref{label_005}}{=} y_k - \frac{1}{L_k} B^{-1} g_{L_k}(y_k) \overset{\eqref{label_019}}{=} y_k - \frac{1}{L_k} B^{-1} g_k, \quad k \geq 1.
\end{equation}
Whenever we refer to the initial point $x_0$ and the gradient step points $x_k$ with $k \geq 1$ collectively, we use the term x-points.

\subsection{The algorithmic template}

We propose the template outlined in Algorithm~\ref{label_021} that minimizes the gradient mapping norm subject to an initial function residual. It takes as input the starting point $x_0$ and the number of iterations $T$. The template itself is not online and $T \geq 2$ must be specified in advance. For this reason, throughout our analysis, we will use bounded integer ranges where $k \inn{i_1}{i_2}$ denotes $k \in [i_1, i_2] \cap \mathbb{Z}$.

\begin{algorithm}[h]
\caption{A template for minimizing $\|g_T\|$ given $F(x_0) - F^*$}
\label{label_021}
\begin{algorithmic}[1]
\STATE \textbf{Input:} $x_0 \in \mathbb{E}$, $T \geq 2$
\STATE \textbf{Output:} $y_T$, $g_T$, $L_T$, $x_T$
\STATE $s_0 = 0$ \label{label_021_s0}
\FOR{$k = 0, \ldots{}, T - 1$}
\STATE $y_{k + 1} = \left\{\begin{array}{ll}x_0, & k = 0 \\ x_k - b_k B^{-1} s_k, & k \geq 1 \end{array}\right.$ \label{label_021_y}
\STATE Compute $L_{k + 1}$ based on $y_{k + 1}$ \label{label_021_var}
\STATE $x_{k + 1} = T_{L_{k + 1}}(y_{k + 1})$
\STATE $g_{k + 1} = L_{k + 1} B (y_{k + 1} - x_{k + 1})$ \label{label_021_L_new}
\IF {$k = T - 1$}
\STATE break
\ENDIF
\STATE Compute $a_{k + 1}$ and $b_{k + 1}$
\STATE $s_{k + 1} = s_k + a_{k + 1} g_{k + 1}$ \label{label_021_s_new}
\ENDFOR
\end{algorithmic}
\end{algorithm}

To instantiate the template into an actual algorithm, one only needs to define a means of obtaining the Lipschitz estimates $L_k$ for each $k \inn{1}{T}$ and, very importantly, determine update rules for the \emph{positive} sequences $\{a_k\}_{k \inn{1}{T - 1}}$, $\{b_k\}_{k \inn{1}{T-1}}$, as shown in line~\ref{label_021_var}.

To aid us in our analysis, we define the forward accumulators $A_k \defq A_0 + \sum_{i = 1}^{k} a_i$, $k \inn{1}{T - 1}$ with $A_0 > 0$ and the ranged accumulators $B_{k,\bar{k}} \defq \sum_{i = k}^{\bar{k} - 1} b_i$ where both $k$ and $\bar{k}$ span $\overline{1,T}$, noting that $B_{k, \bar{k}} = 0$ when $k \geq \bar{k}$. We thus have for all $T \geq 2$ that
\begin{gather}
A_{k} + a_{k + 1} = A_{k + 1}, \quad a_{k + 1} > 0, \quad A_0 > 0, \quad k \inn{0}{T-2}, \label{label_027} \\
B_{k,T} = B_{k + 1,T} + b_{k}, \quad b_{k} > 0, \quad B_{T,T} = 0, \quad k \inn{1}{T-1}. \label{label_028}
\end{gather}

The template outputs the state parameters with desirable properties obtained after performing $T$ iterations: $y_T$ and $g_T$ as well as $L_T$ and $x_T$, if necessary. Note that the output oracle point $y_T$ requires only $T - 1$ gradient-like quantity computations, excluding the overhead pertaining to line-search or similar mechanisms. This explains why the weight sequences $a_k$ and $b_k$ are only defined up to index $T-1$. As we shall see, when instantiating the template for OGM-G, the value of $a_T$ becomes infinite.

For the sake of completeness let us study what happens if we use $T = 1$ in Algorithm~\ref{label_021}. Our template reduces to one step of the Gradient Method with the sequences $a_1$ and $b_1$ having no impact on the behavior of the scheme. We instead have $y_1 = x_0$ and $x_1 = T_{L_1}(x_0)$ and the well-known descent rule~\cite{ref_019,ref_020} implies
\begin{equation} \label{label_029}
\|g_1\|_*^2 \leq 2 L_1 (F(x_0) - F(x_1)) \leq 2 L_1 (F(x_0) - F^*).
\end{equation}

\subsection{Equivalent forms}

Gradient norm minimization algorithms have been proposed using a variety of update rules. For instance, the calculation of the oracle point in the original OGM-G~\cite{ref_011} relies on the previous two x-points and the previous oracle point. This formulation was been denominated in \cite{ref_014} as the momentum form and extended to a variety of optimal first-order schemes. A subexpression in the momentum oracle point update can be updated separately, resulting in an auxiliary iterate form. This refactoring allows for an infinite number of auxiliary iterate expressions. The one listed for OGM-G in \cite{ref_011} was chosen to resemble the OGM counterpart but \cite{ref_014} proposes the only auxiliary sequence that can be updated independently in an \emph{additive} fashion. Due to this an several other useful properties we consider the sequence in \cite{ref_014} to be the canonical reformulation. Also, \cite{ref_003} lists a variant of OGM-G with poorer convergence guarantees employing \emph{two} separate auxiliary iterates. To better compare with existing methods and to discover new insights into the process of optimal gradient norm minimization, we investigate how our template relates to these other formulations by rearranging it accordingly.

In the momentum form, the oracle points $y_{k + 1}$, $k \geq 2$, are obtained from the previous two x-points $x_k$ and $x_{k - 1}$, using extrapolation with a factor of $\mathcal{E}_k > 0$ while adding a gradient step correction term with parameter $\mathcal{C}_k > 0$, according to
\begin{equation}
y_{k + 1} = x_k + \mathcal{E}_k (x_k - x_{k - 1}) + \mathcal{C}_k (x_k - y_k), \quad k \geq 2.
\end{equation}
Note that we can restrict our analysis to $k \geq 2$, if applicable, because the first two oracle points have simple expressions that do not utilize weight accumulation, namely $y_1 = x_0$ and $y_2 = x_1 - b_1 a_1 g_1$.
From Algorithm~\ref{label_021} we infer that
\begin{equation} \label{label_030}
\begin{gathered}
x_k - x_{k - 1} = y_k - \frac{1}{L_k} B^{-1} g_k - x_{k - 1} = -b_{k - 1} B^{-1} s_{k - 1} - \frac{1}{L} B^{-1} g_k \\
\overset{\on{line}\ref{label_021_s_new}}{=} -b_{k - 1} B^{-1} (s_k - a_k g_k) - \frac{1}{L_k} B^{-1} g_k =
-b_{k - 1} B^{-1} s_k - \left(\frac{1}{L_k} - a_k
b_{k - 1}\right) B^{-1} g_k \\ \overset{\on{line}\ref{label_021_L_new}}{=} -b_{k - 1} B^{-1} s_k - \left(1 - L_k a_k b_{k - 1}\right) (y_k - x_k), \quad k \geq 2.
\end{gathered}
\end{equation}
From line~\ref{label_021_y} we also have that
\begin{equation} \label{label_031}
-b_{k - 1} B^{-1} s_k = \frac{b_{k - 1}}{b_k}(y_{k + 1} - x_k), \quad k \geq 2.
\end{equation}
Substituting \eqref{label_031} in \eqref{label_030} and refactoring gives
\begin{equation} \label{label_032}
y_{k + 1} = x_k + \frac{b_k}{b_{k - 1}}(x_k - x_{k - 1}) + b_k \left(L_k a_k - \frac{1}{b_{k - 1}} \right) (x_k - y_k), \quad k \geq 2,
\end{equation}
and hence
\begin{equation} \label{label_033}
\mathcal{E}_k = \frac{b_k}{b_{k - 1}}, \quad \mathcal{C}_k = b_k \left(L_k a_k - \frac{1}{b_{k - 1}} \right) , \quad k \geq 2.
\end{equation}
We list the extrapolation based equivalent template in Algorithm~\ref{label_034}.

\begin{algorithm}[h]
\caption{Extrapolation based form of Algorithm~\ref{label_021}}
\label{label_034}
\begin{algorithmic}[1]
\STATE \textbf{Input:} $x_0 \in \mathbb{E}$, $T \geq 2$
\STATE \textbf{Output:} $y_T$, $g_T$, $L_T$, $x_T$
\FOR{$k = 0, \ldots{}, T - 1$}
\STATE $y_{k + 1} = \left\{\begin{array}{ll}x_0, &k = 0 \\
x_1 - b_1 a_1 B^{-1} g_1, & k = 1 \\
x_k + \frac{b_k}{b_{k - 1}}(x_k - x_{k - 1}) + b_k \left(L_k a_k - \frac{1}{b_{k - 1}} \right) (y_k - x_k), & k \geq 2 \end{array}\right.$\label{label_035}
\STATE Based on $y_{k + 1}$, compute $L_{k + 1}$, $a_{k + 1}$ and $b_{k + 1}$ \label{label_036}
\STATE $x_{k + 1} = T_{L_{k + 1}}(y_{k + 1})$
\STATE $g_{k + 1} = L_{k + 1} B (y_{k + 1} - x_{k + 1})$ \label{label_037}
\ENDFOR
\end{algorithmic}
\end{algorithm}

We next turn our attention to the efficient form described in \cite{ref_011}, whereby the next oracle point is obtained via a convex combination between the previous x-point and an auxiliary sequence, denoted by $(v_k)_{k \inn{0}{T - 1}}$, that is updated by adding a constant times the new gradient-like quantity at each iteration. Note that $(v_k)_{k \inn{0}{T - 1}}$ differs markedly from our gradient accumulation sequence $(s_k)_{k \inn{0}{T - 1}}$ in Algorithm~\ref{label_021}.

For every $k \geq 0$, we denote the scalar step sequence $(c_k)_{k \inn{1}{T - 1}}$ with positive entries defining the auxiliary sequence update and the multiplier sequence $(d_k)_{k \inn{1}{T - 1}}$ with all entries in $[0,1]$ blending $x_k$ and $v_k$ in the oracle point update. We have
\begin{align}
y_{k + 1} &= (1 - d_{k}) x_k + d_{k} v_k, \quad k \inn{1}{T-1}, \label{label_038} \\
v_{k + 1} &= v_k - c_{k + 1} B^{-1} g_{k + 1}, \quad k \inn{0}{T-2}, \label{label_039}
\end{align}
where $v_0 = x_0$. The first update in \eqref{label_038} implies
\begin{equation} \label{label_040}
y_{k + 1} = x_k + d_{k} (v_k - x_k), \quad k \inn{1}{T-1}.
\end{equation}
We can unroll the calculation of $v_k$ and $x_k$ up to $x_0$ for $k \inn{1}{T-1}$ as follows:
\begin{gather}
v_{k} \overset{\eqref{label_039}}{=} x_0 - \sum_{i = 1}^{k} c_i B^{-1} g_i, \label{label_041}\\
x_{k} \overset{\eqref{label_020}}{=} y_k - \frac{1}{L_k} B^{-1} g_k \overset{\on{line}\ref{label_021_y}}{=} x_{k - 1} - b_{k - 1} B^{-1} s_{k - 1} - \frac{1}{L_k} B^{-1} g_k \label{label_042}
\\ \overset{\eqref{label_020}}{=} y_{k - 1} - \frac{1}{L_{k - 1}} B^{-1} g_{k - 1} - b_{k - 1} B^{-1} s_{k - 1} - \frac{1}{L_k} B^{-1} g_k = ... = x_0 - \mathcal{L}_k - \mathcal{S}_{k - 1}, \nonumber
\end{gather}
where
\begin{equation} \label{label_043}
\mathcal{L}_k \defq \sum_{i = 1}^{k} \frac{1}{L_i} B^{-1} g_i, \quad \mathcal{S}_k \defq \sum_{i = 1}^{k} b_i B^{-1} s_i, \quad k \inn{0}{T - 1}.
\end{equation}
From lines \ref{label_021_s0} and \ref{label_021_s_new} in Algorithm~\ref{label_021} we can also unroll the computation of $s_k$ as
\begin{equation} \label{label_044}
s_k = \sum_{i = 1}^{k} a_i g_i, \quad k \inn{0}{T - 1}.
\end{equation}
We can refactor $\mathcal{S}_k$ using the ranged accumulator notation, taking into account that $a_{k + 1} B_{k + 1, k + 1} = 0$, for all $k \inn{0}{T - 2}$ as follows:
\begin{equation} \label{label_045}
\mathcal{S}_k = \sum_{i = 1}^{k} b_i B^{-1} \sum_{j = 1}^{i} a_j g_j = \sum_{i = 1}^{k} a_i B_{i, k + 1} B^{-1} g_i = \sum_{i = 1}^{k + 1} a_i B_{i, k + 1} B^{-1} g_i.
\end{equation}
Substituting \eqref{label_041} and \eqref{label_042} using \eqref{label_045} in \eqref{label_040} and rearranging terms we obtain
\begin{equation}
y_{k + 1} = x_k + d_{k} \sum_{i = 1}^{k} \left(\frac{1}{L_i} + a_i B_{i, k} - c_i \right) B^{-1} g_i, \quad k \inn{1}{T-1}.
\end{equation}
For the update in line~\ref{label_021_y} of Algorithm~\ref{label_021} to match this auxiliary iterate form, we must have
\begin{equation}
d_{k} \left(\frac{1}{L_i} + a_i B_{i, k} - c_i \right) = -b_k a_i, \quad i \inn{1}{k}, \quad k \inn{1}{T-1}.
\end{equation}
We define the sequence $\tilde{c}_k = \frac{1}{a_k} \left( c_k - \frac{1}{L_k} \right)$ satisfying $c_k = \frac{1}{L_k} + a_k \tilde{c_k}$, $k \geq 1$ and we thus need to have
\begin{equation}
d_{k} (B_{i,k} - \tilde{c}_i) = -b_k, \quad i \inn{1}{k}, \quad k \inn{1}{T-1}.
\end{equation}
Noting that $B_{i,k} + B_{k, T} = B_{i, T}$ for all $i \inn{1}{k}$ and $k \inn{1}{T-1}$, we arrive at the solution
\begin{equation} \label{label_046}
d_k = \frac{b_k}{B_{k,T}}, \quad \tilde{c}_k = B_{k, T}, \quad c_k = \frac{1}{L_k} + a_k B_{k, T}, \quad k \inn{1}{T-1}.
\end{equation}
Interestingly, taking \eqref{label_046} with $k = T - 1$ implies that $d_{T - 1} = \frac{b_{T - 1}}{B_{T -1, T}} = 1$ and from \eqref{label_038} we have
\begin{equation} \label{label_047}
y_T = v_{T - 1}.
\end{equation}

We can now write down the auxiliary sequence formulation of our template in Algorithm~\ref{label_048}. It produces the same iterates as Algorithm~\ref{label_021}, but relies on the precomputation of the weight sequence $(b_k)_{k \inn{0}{T - 1}}$.

\begin{algorithm}[h]
\caption{Auxiliary sequence form of Algorithm~\ref{label_021}}
\label{label_048}
\begin{algorithmic}[1]
\STATE \textbf{Input:} $x_0 \in \mathbb{E}$, $T \geq 2$
\STATE \textbf{Output:} $y_T = v_{T - 1}$
\STATE Precompute the sequence $(b_k)_{k \inn{1}{T - 1}}$
\STATE $v_0 = x_0$ \label{label_049}
\FOR{$k = 0, \ldots{}, T - 2$}
\STATE $y_{k + 1} = \left\{\begin{array}{ll} x_0, & k = 0 \\ \frac{1}{B_{k, T}} \left(B_{k + 1, T} x_k + b_k v_k \right), & k \geq 1 \end{array} \right.$ \label{label_050}
\STATE Based on $y_{k + 1}$, compute $L_{k + 1}$ and $a_{k + 1}$ \label{label_051}
\STATE $x_{k + 1} = T_{L_{k + 1}}(y_{k + 1})$
\STATE $g_{k + 1} = L_{k + 1} B (y_{k + 1} - x_{k + 1})$ \label{label_052}
\STATE $v_{k + 1} = v_k - \left(\frac{1}{L_{k + 1}} + a_{k + 1} B_{k + 1, T}\right) B^{-1} g_{k + 1}$ \label{label_053}
\ENDFOR
\end{algorithmic}
\end{algorithm}

We can further combine Algorithm~\ref{label_021} with Algorithm~\ref{label_048} to produce a form that relies on the two auxiliary sequences $(s_k)_{k \inn{0}{T - 1}}$ and $(v_k)_{k \inn{0}{T - 1}}$. For instance, the oracle points for OGM-G-DW were updated in \cite{ref_003} using both sequences, an expression that we generalize to
\begin{equation} \label{label_054}
A_{k + 1} y_{k + 1} = A_k x_k + a_{k + 1} v_k - \bar{c}_{k + 1} B^{-1} s_k, \quad k \inn{1}{T - 2},
\end{equation}
where $(\bar{c}_k)_{k \inn{1}{T - 1}}$ is a positive weight sequence. Outside the range $k \inn{1}{T - 2}$ we have the updates $y_1 = x_0$ and $y_T = v_{T - 1}$ as given by \eqref{label_047}. Note that \eqref{label_047} is a more efficient substitute for the equivalent update $y_T = x_{T - 1} - b_{T - 1} B^{-1} s_{T - 1}$, also found in \cite{ref_003}.

From \eqref{label_041} and \eqref{label_042} we have that
\begin{equation} \label{label_055}
\begin{gathered}
x_k - v_k = \sum_{i = 1}^{k}(c_i - \frac{1}{L_i} - a_i B_{i,k}) B^{-1} g_i = \sum_{i = 1}^{k}(c_i - \frac{1}{L_i} - a_i B_{i,T} + a_i B_{k,T}) B^{-1} g_i \\ \overset{\eqref{label_046}}{=} \sum_{i = 1}^{k} a_i B_{k,T} B^{-1} g_i \overset{\eqref{label_044}}{=} B_{k, T} B^{-1} s_k, \quad k \inn{1}{T - 2}.
\end{gathered}
\end{equation}
From line~\ref{label_021_y} in Algorithm~\ref{label_021} we have
\begin{equation} \label{label_054_c}
\begin{gathered}
A_{k + 1} y_{k + 1} = A_k x_k + a_{k + 1} v_k + a_{k + 1}(x_k - v_k) - A_{k + 1} b_k B^{-1} s_k \\
= A_k x_k + a_{k + 1} v_k - (A_{k + 1} b_k - a_{k + 1} B_{k, T}) B^{-1} s_k, \quad k \inn{1}{T - 2}.
\end{gathered}
\end{equation}
The weights are thus given by
\begin{equation} \label{label_057}
\begin{gathered}
\bar{c}_{k + 1} = A_{k + 1} b_k - a_{k + 1} B_{k, T} = (A_{k} + a_{k + 1}) b_k - a_{k + 1} (b_k + B_{k + 1, T}) \\\ = A_{k} b_k - a_{k + 1} B_{k + 1, T}, \quad k \inn{1}{T - 2}.
\end{gathered}
\end{equation}

We list this alternate form in Algorithm~\ref{label_021_diak}. It is the most restrictive because it requires $A_0$ to be supplied as an input parameter and the sequence $(b_k)_{k \inn{1}{T - 1}}$ needs to be precomputed as well.

\begin{algorithm}[h]
\caption{Two auxiliary sequence form of Algorithm~\ref{label_021}}
\label{label_021_diak}
\begin{algorithmic}[1]
\STATE \textbf{Input:} $x_0 \in \mathbb{E}$, $A_0 > 0$, $T \geq 2$
\STATE \textbf{Output:} $y_T = v_{T - 1}$, $A_{T - 1}$
\STATE Precompute the sequence $(b_k)_{k \inn{1}{T - 1}}$
\STATE $v_0 = x_0$
\STATE $s_0 = 0$
\FOR{$k = 0, \ldots{}, T - 2$}
\STATE Compute $a_{k + 1}$ \label{label_059}
\STATE $A_{k + 1} = A_k + a_{k + 1}$
\STATE $y_{k + 1} = \left\{\begin{array}{ll}x_0, &k = 0 \\
\frac{1}{A_{k + 1}} \left( A_k x_k + a_{k + 1} v_k - (A_k b_k - a_{k + 1} B_{k + 1, T}) B^{-1} s_k \right), & k \geq 1 \end{array}\right.$\label{label_060}
\STATE Based on $y_{k + 1}$ compute $L_{k + 1}$
\STATE $x_{k + 1} = T_{L_{k + 1}}(y_{k + 1})$
\STATE $g_{k + 1} = L_{k + 1} B (y_{k + 1} - x_{k + 1})$ \label{label_061}
\STATE $s_{k + 1} = s_k + a_{k + 1} g_{k + 1}$
\STATE $v_{k + 1} = v_k - \left(\frac{1}{L_{k + 1}} + a_{k + 1} B_{k + 1, T}\right) B^{-1} g_{k + 1}$
\ENDFOR
\end{algorithmic}
\end{algorithm}

\subsection{Convergence results} \label{label_062}

The template in Algorithm~\ref{label_021} is specific enough to allow us to obtain useful results that will be shared among the convergence analyses of \emph{all} instances, without requiring any knowledge on the problem. Key to our analysis are the following gradient aggregators
\begin{equation} \label{label_063}
G^{(1)}_T \defq \sum_{k = 0}^{T - 1} A_k \langle g_{k + 1}, x_k - y_{k + 1} \rangle, \quad
G^{(2)}_T \defq \sum_{k = 1}^{T - 1} a_k \langle g_{k}, x_T - y_{k} \rangle.
\end{equation}
We apply line~\ref{label_021_y} in Algorithm~\ref{label_021} to expand $G^{(1)}_T$ in \eqref{label_063} as
\begin{equation} \label{label_064}
G^{(1)}_T = \sum_{k = 1}^{T - 1} \langle g_{k + 1}, A_k b_k B^{-1} s_k \rangle \overset{\eqref{label_044}}{=} \sum_{i = 1}^{T} \sum_{j = i + 1}^{T} a_i A_{j - 1} b_{j - 1} \langle g_i, B^{-1} g_j \rangle.
\end{equation}
From \eqref{label_042} we have that
\begin{gather}
x_T - y_k = x_0 - (\mathcal{L}_T - \mathcal{L}_{k - 1}) - (\mathcal{S}_{T - 1} - \mathcal{S}_{k - 1}) , \quad k \inn{1}{T-1}. \label{label_065}
\end{gather}
Then, we can substitute the unrolled update in \eqref{label_065} to $G^{(2)}_T$ in \eqref{label_063}, yielding
\begin{equation} \label{label_066}
G^{(2)}_T = -\sum_{k = 1}^{T - 1} \langle g_{k}, a_k (\mathcal{L}_T - \mathcal{L}_{k - 1}) \rangle -\sum_{k = 1}^{T - 1} \langle g_{k}, a_k (\mathcal{S}_{T - 1} - \mathcal{S}_{k - 1}) \rangle.
\end{equation}
We expand two subexpressions in \eqref{label_066} and attain for all $k \inn{1}{T-1}$
\begin{gather}
\langle g_k, a_k (\mathcal{L}_T - \mathcal{L}_{k - 1}) \rangle = \frac{a_k}{L_k} \| g_k \|_*^2 + \sum_{i = k + 1}^{T} \frac{a_k}{L_i} \langle g_k, B^{-1} g_i \rangle , \label{label_067} \\
\mathcal{S}_{T - 1} - \mathcal{S}_{k - 1} = \sum_{i = k}^{T - 1} b_i B^{-1} s_i = \sum_{i = k}^{T - 1} b_i B^{-1} (s_i - s_{k - 1}) + \sum_{i = k}^{T - 1} b_i B^{-1} s_{k - 1}. \label{label_068}
\end{gather}
Further, we use the definition $B_{k,T}$ to express \eqref{label_068} as
\begin{equation} \label{label_069}
\begin{gathered}
\mathcal{S}_{T - 1} - \mathcal{S}_{k - 1} \overset{\eqref{label_044}}{=} \sum_{i = k}^{T - 1} b_i B^{-1} \sum_{j = k}^{i} a_j g_j + \sum_{i = k}^{T - 1} b_i B^{-1} \sum_{j = 1}^{k - 1} a_j g_j \\ \overset{\eqref{label_028}}{=} a_k B_{k,T} B^{-1} g_k + \sum_{i = k + 1}^{T - 1} a_i B_{i,T} B^{-1} g_i + \sum_{i = 1}^{k - 1} a_i B_{k,T} B^{-1} g_i, \quad k \inn{1}{T-1}.
\end{gathered}
\end{equation}
We apply $\langle g_k, . \rangle$ to both sides of \eqref{label_069} and have for all $k \inn{1}{T-1}$ that
\begin{equation} \label{label_070}
\begin{gathered}
\langle g_k, a_k (\mathcal{S}_{T - 1} - \mathcal{S}_{k - 1}) \rangle = a_k^2 B_{k,T} \| g_k \|_*^2 \\
+ \sum_{i = 1}^{k - 1} a_i a_k B_{k,T} \langle g_i, B^{-1} g_k \rangle
+ \sum_{i = k + 1}^{T - 1} a_k a_i B_{i,T} \langle g_k, B^{-1} g_i \rangle.
\end{gathered}
\end{equation}

We substitute \eqref{label_067} and \eqref{label_070} in \eqref{label_066} and add the result to \eqref{label_064} to obtain
\begin{equation} \label{label_071}
\begin{gathered}
G^{(1)}_T + G^{(2)}_T = \sum_{i = 1}^{T - 1} \sum_{j = i + 1}^{T} a_i A_{j - 1} b_{j - 1} \langle g_i, B^{-1} g_j \rangle - \sum_{k = 1}^{T - 1} \frac{a_k}{L_k} \| g_k \|_*^2 \\
- \sum_{i = 1}^{T - 1} \sum_{j = i + 1}^{T} \frac{a_i}{L_j} \langle g_i, B^{-1} g_j \rangle
- \sum_{k = 1}^{T - 1} a_k^2 B_{k, T} \| g_k \|_*^2 - \sum_{i = 1}^{T - 1} \sum_{j = i + 1}^{T - 1} 2 a_i a_j B_{j, T} \langle g_i, B^{-1} g_j \rangle.
\end{gathered}
\end{equation}
Rearranging terms in \eqref{label_071} using $B_{T, T} = 0$ gives the main technical result
\begin{equation} \label{label_072}
G^{(1)}_T + G^{(2)}_T = \sum_{k = 1}^{T - 1} \left( - \frac{a_k}{L_k} - a_k^2 B_{k, T} \right) \| g_k \|_*^2 + D_{T} \overset{\eqref{label_046}}{=} -\sum_{k = 1}^{T - 1} a_k c_k \| g_k \|_*^2 + D_{T},
\end{equation}
where for all $T \geq 2$ we define $D_{T}$ as
\begin{gather}
D_{T} \defq \sum_{i = 1}^{T - 1} \sum_{j = i + 1}^{T - 1} a_i \left( A_{j - 1} b_{j - 1} - \frac{1}{L_j} - 2 a_j B_{j, T} \right) \langle g_i, B^{-1} g_j \rangle \nonumber \\
+ \sum_{i = 1}^{T - 1} a_i \left( A_{T - 1} b_{T - 1} - \frac{1}{L_T} \right) \langle g_i, B^{-1} g_T \rangle \\
\overset{\eqref{label_044}}{=} \sum_{k = 1}^{T - 2} \left( A_{k} b_{k} - \frac{1}{L_{k + 1}} - 2 a_{k + 1} B_{k + 1, T} \right) \langle s_k, B^{-1} g_{k + 1} \rangle \nonumber\\ + \left( A_{T - 1} b_{T - 1} - \frac{1}{L_{T}} \right) \langle s_{T - 1}, B^{-1} g_T \rangle.
\end{gather}

\section{Revisiting OGM-G} With the template and auxiliary results defined, we next study whether OGM-G is compatible with our framework, and if so, what we can gain from this new perspective.

\subsection{Building a method}
Let us first consider constructing an algorithm applicable to smooth unconstrained problems based on the bounds in \eqref{label_002}. These bounds are not known to allow for a line-search procedure when $x$ is an unknown or future point so we assume the algorithm knows $L$ and set $L_k = L$ for all $k \inn{1}{T}$.

Applying \eqref{label_002} with $x = y_k$ and $y = y_{k + 1}$ gives
\begin{equation} \label{label_073}
f(y_k) \geq f(y_{k + 1}) + \langle g_{k + 1}, x_k - y_{k + 1} \rangle + \frac{1}{2 L} \| g_k \|_*^2 + \frac{1}{2 L} \| g_{k + 1} \|_*^2, \quad k \inn{1}{T - 1}.
\end{equation}
Multiplying every \eqref{label_073} with $A_k$ for $k \inn{1}{T-1}$ and adding them together gives
\begin{equation} \label{label_074}
\begin{gathered}
A_1 f(y_1) + \sum_{k = 2}^{T - 1} a_k f(y_k) \geq A_{T - 1} f(y_T) \\
+ \frac{A_1}{2 L} \| g_{1} \|_*^2 + \sum_{k = 1}^{T - 2} \frac{A_k + A_{k + 1}}{2 L} \| g_{k + 1} \|_*^2 + \frac{A_{T - 1}}{2 L} \| g_{T} \|_*^2 + G^{(1)}_T.
\end{gathered}
\end{equation}
Applying \eqref{label_002} this time with $x = y_T$ and $y = y_{k + 1}$ yields
\begin{equation} \label{label_075}
f(y_T) \geq f(y_{k + 1}) + \langle g_{k + 1}, x_T - y_{k + 1} \rangle + \frac{1}{2 L} \| g_T \|_*^2 + \frac{1}{2 L} \| g_{k + 1} \|_*^2, \quad k \inn{0}{T-2}.
\end{equation}
Multiplying every \eqref{label_075} with $a_{k + 1}$ for $k \inn{0}{T-2}$ and taking the summation gives
\begin{equation} \label{label_076}
(A_{T - 1} - A_0) f(y_T) \geq \sum_{k = 1}^{T - 1} a_k f(y_k) + \sum_{k = 0}^{T - 2} \frac{a_{k + 1}}{2 L} \| g_{k + 1} \|_*^2 + \frac{A_{T - 1} - A_0}{2 L} \| g_T \|_*^2 + G^{(2)}_T.
\end{equation}
Note that the ranges of $k$ in \eqref{label_073} and \eqref{label_075} are the widest that avoid redundancies.

Adding together \eqref{label_074} with \eqref{label_076} and canceling matching terms while taking into account that $A_1 f(y_1) = A_0 f(x_0) + a_1 f(y_1)$ we obtain
\begin{equation} \label{label_077}
\begin{gathered}
A_0 (f(x_0) - f(y_T)) \geq G^{(1)}_T + G^{(2)}_T \\ + \frac{A_1 + a_1}{2 L} \| g_{1} \|_*^2 + \sum_{k = 2}^{T - 1} \frac{A_k}{L} \| g_{k} \|_*^2 + \left(\frac{A_{T - 1}}{L} - \frac{A_0}{2 L} \right) \| g_{T} \|_*^2.
\end{gathered}
\end{equation}
The gradient step point $x_T$ satisfies the well-known descent rule~\cite{ref_019,ref_020}
\begin{equation} \label{label_078}
f(x_T) \leq f(y_T) - \frac{1}{2 L} \| g_{T} \|_*^2.
\end{equation}
Multiplying both sides of \eqref{label_078} with $\frac{A_0}{2 L}$ and adding the result to \eqref{label_077} gives
\begin{equation} \label{label_079}
A_0 (f(x_0) - f(x_T)) \geq G^{(1)}_T + G^{(2)}_T + \frac{A_1 + a_1}{2 L} \| g_{1} \|_*^2 + \sum_{k = 2}^{T - 1} \frac{A_k}{L} \| g_{k} \|_*^2 + \frac{A_{T - 1}}{L} \| g_{T} \|_*^2.
\end{equation}
Expanding $G^{(1)}_T + G^{(2)}_T$ using \eqref{label_072} we finally get
\begin{equation} \label{eq:ogm-g-main}
\begin{gathered}
A_0 (f(x_0) - f(x_T)) \geq \left(\frac{A_0}{2 L} - a_1^2 B_{1, T} \right) \| g_{1} \|_*^2 + \sum_{k = 2}^{T - 1} \left(\frac{A_{k - 1}}{L} - a_k^2 B_{k, T} \right)\| g_{k} \|_*^2 \\
+ \frac{A_{T - 1}}{L} \| g_{T} \|_*^2 + \sum_{k = 1}^{T - 2} \left( A_{k} b_{k} - \frac{1}{L} - 2 a_{k + 1} B_{k + 1, T} \right) \langle s_k, B^{-1} g_{k + 1} \rangle \\
+ \left( A_{T - 1} b_{T - 1} - \frac{1}{L} \right) \langle s_{T - 1}, B^{-1} g_{T} \rangle.
\end{gathered}
\end{equation}
Our aim is to maximize $\|g_T\|$ subject to $f(x_0) - f(x_T)$, irrespective of the algorithmic state. The most aggressive means of achieving this is by enforcing in \eqref{eq:ogm-g-main} the following equalities:
\begin{align}
\frac{A_0}{2 L} &= a_1^2 B_{1, T}, \label{label_080} \\
\frac{A_{k - 1}}{L} &= a_k^2 B_{k, T}, \quad k \inn{2}{T - 1}, \label{label_081} \\
A_{k} b_{k} &= \frac{1}{L} + 2 a_{k + 1} B_{k + 1, T}, \quad k \inn{1}{T - 1}. \label{label_082} \\
A_{T - 1} b_{T - 1} &= \frac{1}{L}. \label{label_083}
\end{align}
Under the assumptions \eqref{label_080}, \eqref{label_081}, \eqref{label_082} and \eqref{label_083}, we can refactor \eqref{eq:ogm-g-main} to yield
\begin{equation} \label{label_084}
\| g_T \|^2 \leq \frac{L A_0}{A_{T - 1}} (f(x_0) - f(x_T)) \leq \frac{L A_0}{A_{T - 1}} (f(x_0) - f(x^*)).
\end{equation}

\begin{proposition}
The conditions in \eqref{label_080}, \eqref{label_081} and \eqref{label_082} admit the following solution:
\begin{align}
A_k A_{k + 1}^2 &= A_{T - 1} a_{k + 1}^2, \quad k \inn{1}{T - 2}, \label{label_081_rec}\\
A_0 A_1^2 &= 2 A_{T - 1} a_1^2, \label{label_080_rec} \\
B_{k, T} &= \frac{A_{T - 1}}{L A_k^2}, \quad k \inn{1}{T - 1}, \label{label_087}\\
b_k &= \frac{1}{L} \left(\frac{1}{A_k} + \frac{2}{a_{k + 1}} \right), \quad k \inn{1}{T - 2}, \label{label_088}
\end{align}
\end{proposition}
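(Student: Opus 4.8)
The plan is to treat the closed form \eqref{label_087} as an ansatz for the ranged accumulator $B_{k,T}$, show that imposing it turns \eqref{label_080} and \eqref{label_081} verbatim into the recursions \eqref{label_080_rec} and \eqref{label_081_rec}, then read off $b_k$ from \eqref{label_082}, and finally confirm that the resulting sequences are consistent with the defining telescoping relation \eqref{label_028} (which, as a bonus, delivers \eqref{label_083} automatically).

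First I would substitute $B_{k,T} = \tfrac{A_{T-1}}{L A_k^2}$ into \eqref{label_081}. Since $a_k^2 B_{k,T} = \tfrac{a_k^2 A_{T-1}}{L A_k^2}$, condition \eqref{label_081} becomes $A_{k-1} A_k^2 = A_{T-1} a_k^2$ for $k \inn{2}{T-1}$; renaming $k \mapsto k+1$ this is exactly \eqref{label_081_rec}. The same substitution in \eqref{label_080} gives $\tfrac{A_0}{2L} = \tfrac{a_1^2 A_{T-1}}{L A_1^2}$, i.e. $A_0 A_1^2 = 2 A_{T-1} a_1^2$, which is \eqref{label_080_rec}. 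Thus \eqref{label_080_rec} and \eqref{label_081_rec} are just \eqref{label_080} and \eqref{label_081} rewritten under the ansatz, and conversely the ansatz is forced once one requires $B_{k,T}$ to depend only on $A_k$.

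Next I would recover $b_k$ from \eqref{label_082}. By \eqref{label_081} (equivalently the ansatz together with \eqref{label_081_rec}) we have $B_{k+1,T} = \tfrac{A_k}{L a_{k+1}^2}$ for $k \inn{1}{T-2}$; plugging this into \eqref{label_082} yields $A_k b_k = \tfrac1L + 2 a_{k+1}\cdot \tfrac{A_k}{L a_{k+1}^2} = \tfrac1L + \tfrac{2 A_k}{L a_{k+1}}$, hence $b_k = \tfrac1L\bigl(\tfrac1{A_k} + \tfrac2{a_{k+1}}\bigr)$, which is \eqref{label_088}.

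The hard part — the only step that is a genuine identity rather than a rearrangement — will be checking that \eqref{label_087} and \eqref{label_088} are compatible with the telescoping relation $B_{k,T} = B_{k+1,T} + b_k$ of \eqref{label_028}. Using a difference of squares and $A_{k+1} - A_k = a_{k+1}$, I would write $\tfrac{A_{T-1}}{L A_k^2} - \tfrac{A_{T-1}}{L A_{k+1}^2} = \tfrac{A_{T-1}\,a_{k+1}(A_k + A_{k+1})}{L A_k^2 A_{k+1}^2}$, then substitute $A_{T-1} a_{k+1}^2 = A_k A_{k+1}^2$ from \eqref{label_081_rec} and use $A_k + A_{k+1} = 2 A_k + a_{k+1}$; the right-hand side collapses to $\tfrac1L\bigl(\tfrac1{A_k} + \tfrac2{a_{k+1}}\bigr) = b_k$, so the telescoping holds for $k \inn{1}{T-2}$. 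For $k = T-1$ it reduces to $B_{T-1,T} = b_{T-1}$ since $B_{T,T} = 0$, while \eqref{label_087} gives $B_{T-1,T} = \tfrac1{L A_{T-1}}$, hence $A_{T-1} b_{T-1} = \tfrac1L$, recovering \eqref{label_083} without imposing it separately; the degenerate case $T = 2$ is covered by the same chain with the empty index ranges read as vacuous. Beyond spotting the ansatz \eqref{label_087}, I expect no further obstacle: the argument is a short sequence of substitutions plus the one difference-of-squares simplification that actually uses \eqref{label_081_rec}.
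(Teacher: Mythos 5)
Your proof is correct and rests on the same core identity as the paper's: that the closed form $B_{k,T} = \tfrac{A_{T-1}}{L A_k^2}$ telescopes consistently with $b_k$ from \eqref{label_088} once \eqref{label_081_rec} is invoked (your difference-of-squares computation is exactly the paper's inductive step \eqref{label_089}--\eqref{label_090}, read as a difference instead of a sum). The only difference is organizational — you verify the ansatz, whereas the paper derives it by reverse induction from the boundary $B_{T,T}=0$ — and since the proposition only claims the formulas are \emph{a} solution, your verification direction suffices.
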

\begin{proof}
First we restrict ourselves $k = T - 1$, unless specified otherwise. Combining \eqref{label_083} with $B_{T - 1, T} = b_{T - 1}$ implies \eqref{label_087}. Substituting \eqref{label_087} in \eqref{label_081} gives \eqref{label_081_rec} for $k = T - 2$. Under the restriction $k = T - 2$, \eqref{label_081_rec} and \eqref{label_082} imply \eqref{label_088}.

Now, we proceed by induction in reverse for \eqref{label_081_rec}, \eqref{label_087} and \eqref{label_088} . We have shown that \eqref{label_081_rec}, \eqref{label_087} and \eqref{label_088} hold for the last index in their respective ranges.

We assume that for a given $\bar{k}$, \eqref{label_081_rec}, \eqref{label_087} and \eqref{label_088} hold for $\bar{k} + 1$, $\bar{k} + 2$ and $\bar{k} + 1$, respectively. We have
\begin{equation} \label{label_089}
B_{\bar{k} + 1, T} = b_{\bar{k} + 1} + B_{\bar{k} + 2, T} = \frac{1}{L}\left(\frac{1}{A_{\bar{k} + 1}} + \frac{2}{a_{\bar{k} + 2}}\right) + \frac{A_{T - 1}}{L A_{\bar{k} + 2}^2}.
\end{equation}
Applying \eqref{label_081_rec} to \eqref{label_089} we get
\begin{equation} \label{label_090}
\begin{gathered}
B_{\bar{k} + 1, T} = \frac{1}{L}\left(\frac{A_{\bar{k} + 2}^2}{A_{\bar{k} + 1} A_{\bar{k} + 2}^2} + \frac{2 A_{T - 1} a_{\bar{k} + 2}}{A_{\bar{k} + 1} A_{\bar{k} + 2}^2} \right) + \frac{A_{T - 1}}{L A_{\bar{k} + 2}^2} \\
\overset{\eqref{label_081_rec}}{=} \frac{1}{L}\left(\frac{A_{T - 1} a_{\bar{k} + 2}^2}{A_{\bar{k} + 1}^2 A_{\bar{k} + 2}^2} + \frac{2 A_{T - 1} a_{\bar{k} + 2} A_{\bar{k} + 1}}{A_{\bar{k} + 1}^2 A_{\bar{k} + 2}^2} \right) + \frac{A_{T - 1} A_{\bar{k} + 1}^2}{L A_{\bar{k} + 1}^2 A_{\bar{k} + 2}^2} = \frac{A_{T - 1}}{L A_{\bar{k} + 1}^2}.
\end{gathered}
\end{equation}
In \eqref{label_090} we have just proven that \eqref{label_087} holds for $k = \bar{k} + 1$. Further applying \eqref{label_081} with $k = \bar{k} + 1$ gives \eqref{label_081_rec} with $k = \bar{k}$. Also, putting together \eqref{label_082} with $k = \bar{k}$, \eqref{label_081_rec} with $k = \bar{k}$ and \eqref{label_090} yields \eqref{label_088} with $k = \bar{k}$. We have just shown by induction that \eqref{label_081_rec}, \eqref{label_087} and \eqref{label_088} hold for the ranges $k \inn{1}{T - 2}$, $k \inn{2}{T - 1}$ and $k \inn{1}{T - 2}$, respectively.

The proof for $B_{1,T}$ follows in the same way as in \eqref{label_089} and \eqref{label_090}.
Finally, applying \eqref{label_080} to \eqref{label_087} with $k = 1$ produces \eqref{label_080_rec}.
\end{proof}
\subsection{Simplifying the updates}
Taking the square root in \eqref{label_081_rec} and multiplying both sides with $\frac{\sqrt{A_{T - 1}}}{A_k A_{k + 1}}$ yields
\begin{equation} \label{label_091}
\sqrt{\frac{A_{T - 1}}{A_k}} = \frac{A_{T - 1}}{A_k} - \frac{A_{T - 1}}{A_{k + 1}}, \quad k \inn{1}{T - 2}.
\end{equation}
Performing the same operation with $k = 0$ in \eqref{label_080_rec} leads to
\begin{equation} \label{label_092}
\sqrt{\frac{A_{T - 1}}{A_0}} = \sqrt{2} \frac{A_{T - 1}}{A_0} - \sqrt{2} \frac{A_{T - 1}}{A_1}.
\end{equation}
To adapt our analysis to the notation found in \cite{ref_011}, we define the sequence $(\theta_{k, T})_{k \inn{0}{T}}$ for any $T \geq 2$ as
\begin{equation} \label{label_093}
\theta_{k, T} = \left\{\begin{array}{ll}
\frac{1}{2}\left( 1 + \sqrt{1 + 8 \theta_{k + 1, T}^2} \right), &k = 0 \\
\frac{1}{2}\left( 1 + \sqrt{1 + 4 \theta_{k + 1, T}^2} \right), &k \inn{1}{T-2} \\
1, &k = T - 1 \\
0, &k = T
\end{array} \right..
\end{equation}
The definition in \eqref{label_093} ensures the recursions
\begin{equation} \label{label_094}
\theta_{0, T} = \theta_{0, T}^2 - 2 \theta_{1, T}^2, \quad \theta_{k, T} = \theta_{k, T}^2 - \theta_{k + 1, T}^2, \quad k \inn{1}{T - 1}.
\end{equation}
From \eqref{label_091}, \eqref{label_092}, \eqref{label_093} and \eqref{label_094} we have that
\begin{equation} \label{label_095}
A_0 = 2 \frac{A_{T-1}}{\theta_{0,T}^2}, \quad A_k = \frac{A_{T-1}}{\theta_{k,T}^2}, \quad k \inn{1}{T - 1}.
\end{equation}
Taking the differences in \eqref{label_095} using \eqref{label_094} we obtain the simple formula
\begin{equation} \label{label_096}
a_{k + 1} = \frac{A_{T-1}}{\theta_{k,T} \theta_{k + 1,T}^2}, \quad k \inn{0}{T - 2}.
\end{equation}
Substituting \eqref{label_095} and \eqref{label_096} into \eqref{label_088} and \eqref{label_083} yields a unified expression for all the steps $b_k$ taking the form
\begin{equation} \label{label_097}
b_{k} = \frac{1}{L A_{T-1}} \theta_{k,T} (\theta_{k,T} + 2 \theta_{k + 1,T}^2) \overset{\eqref{label_094}}{=} \frac{1}{L A_{T-1}} \theta_{k,T}^2 (2 \theta_{k,T} - 1), \quad k \inn{1}{T - 1}.
\end{equation}
Further using \eqref{label_087}, we obtain a single expression for the reverse accumulator and an alternative formula for $b_k$, respectively given by
\begin{equation} \label{label_098}
B_{k,T} \overset{\eqref{label_095}}{=} \frac{\theta_{k,T}^4}{L A_{T - 1}}, \quad b_k \overset{\eqref{label_028}}{=} B_{k + 1,T} - B_{k,T} = \frac{\theta_{k + 1,T}^4 - \theta_{k,T}^4}{L A_{T-1}}, \quad k \inn{1}{T - 1}.
\end{equation}
Instantiating the template in Algorithm~\ref{label_021} with \eqref{label_096} and \eqref{label_097}, noticing that all $A_{T - 1}$ terms cancel, we obtain the method listed in Algorithm~\ref{label_099}.

\begin{algorithm}[h]
\caption{An optimal method for minimizing the gradient norm}
\label{label_099}
\begin{algorithmic}[1]
\STATE \textbf{Input:} $x_0 \in \mathbb{E}$, $T \geq 2$
\STATE \textbf{Output:} $y_T$, $g_T$, optionally $x_T$
\STATE Precompute the entire sequence $(\theta_{k, T})_{k \inn{0}{T}}$ using \eqref{label_093}
\STATE $s_0 = 0$
\FOR{$k = 0, \ldots{}, T - 1$}
\STATE $y_{k + 1} = x_k - \frac{1}{L} \theta_{k,T}^2 (2 \theta_{k,T} - 1) B^{-1} s_k$ \label{label_099_y}
\STATE $g_{k + 1} = f'(y_{k + 1})$
\STATE $x_{k + 1} = y_{k + 1} - \frac{1}{L} B^{-1} g_{k + 1}$
\IF{$k = T - 1$}
\STATE break from loop
\ENDIF
\STATE $s_{k + 1} = s_k + \frac{1}{\theta_{k,T} \theta_{k + 1,T}^2} g_{k + 1}$
\ENDFOR
\end{algorithmic}
\end{algorithm}
\subsection{Relationship to OGM-G} We have just formulated an algorithm for smooth unconstrained gradient norm minimization with the largest last iterate guarantees allowed by our analysis. The update rules in Algorithm~\ref{label_099} clearly differ from all previously introduced methods. We therefore need to explore its equivalent forms and compare them with all previously introduced forms of the method with the highest known guarantees, OGM-G.
\begin{proposition}
Algorithm~\ref{label_099} yields the same output $y_T$ as the original extrapolated form of OGM-G in \cite{ref_011}.
\end{proposition}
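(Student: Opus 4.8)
The plan is to show that Algorithm~\ref{label_099}, after a change of variables, reproduces exactly the iteration of OGM-G in its original extrapolated (momentum) form as given in \cite{ref_011}. Since we have already derived in \eqref{label_032}--\eqref{label_033} the momentum coefficients $\mathcal{E}_k$ and $\mathcal{C}_k$ of a generic template instance, the natural route is to substitute the OGM-G-specific weights \eqref{label_096}, \eqref{label_097}, \eqref{label_098} (together with $L_k = L$) into these formulas and check that the resulting recursion matches, term for term, the published OGM-G update. Concretely, I would first record that OGM-G in \cite{ref_011} uses a sequence that is precisely $(\theta_{k,T})$ as defined in \eqref{label_093} (the two-regime recursion with the factor $8$ at the endpoint $k=0$ is exactly the OGM-G bookkeeping), so the sequences are already aligned and no reindexing of $\theta$ is needed.

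Next I would compute $\mathcal{E}_k = b_k / b_{k-1}$ and $\mathcal{C}_k = b_k(L a_k - 1/b_{k-1})$ for $k \inn{2}{T-1}$ using the closed forms. From \eqref{label_098}, $b_k = (\theta_{k+1,T}^4 - \theta_{k,T}^4)/(L A_{T-1})$, so the ratio $b_k/b_{k-1}$ has all $A_{T-1}$ and $L$ factors cancel, leaving a ratio of consecutive fourth-power differences — which I expect to simplify, via the recursion $\theta_{k,T}^2 - \theta_{k+1,T}^2 = \theta_{k,T}$ from \eqref{label_094}, to the known OGM-G extrapolation factor. Similarly, $L a_k = A_{T-1}/(\theta_{k-1,T}\theta_{k,T}^2) \cdot L / L$ wait — from \eqref{label_096}, $L a_k = L A_{T-1}/(\theta_{k-1,T}\theta_{k,T}^2)$, and combined with $1/b_{k-1} = L A_{T-1}/(\theta_{k,T}^4 - \theta_{k-1,T}^4)$ the $L A_{T-1}$ factors again cancel inside $\mathcal{C}_k$, reducing everything to a rational expression in the $\theta$'s that I would simplify using \eqref{label_094} and the identity $\theta_{k,T}^4 - \theta_{k-1,T}^4 = (\theta_{k,T}^2+\theta_{k-1,T}^2)(\theta_{k-1,T}^2 - \theta_{k,T}^2) = -(\theta_{k,T}^2+\theta_{k-1,T}^2)\theta_{k-1,T}$ for $k\ge 2$. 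I would also separately verify the boundary cases: $y_1 = x_0$ (immediate) and $y_2 = x_1 - b_1 a_1 B^{-1} g_1$ from \eqref{label_032}/Algorithm~\ref{label_034}, checking that $b_1 a_1 = 1/L \cdot$ (the OGM-G first-step coefficient) — here $L a_1 b_1 = (\theta_{1,T}^4 - \theta_{0,T}^4)/(\theta_{0,T}\theta_{1,T}^2)$, which should match the $k=1$ case of the OGM-G momentum update (noting the factor-$2$/factor-$8$ asymmetry at the initial step).

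The main obstacle I anticipate is purely notational/bookkeeping rather than conceptual: the paper in \cite{ref_011} writes OGM-G directly in terms of gradient steps $x_k = y_k - \frac1L f'(y_k)$ and an extrapolation of the form $y_{k+1} = x_k + \text{(coeff}_1)(x_k - x_{k-1}) + \text{(coeff}_2)(x_k - y_k)$, but its coefficients are expressed through its own index convention (descending $\theta$ with possibly shifted subscripts, and a distinct treatment of the last iterate). I will need to carefully match my $(\theta_{k,T})$ to their sequence and confirm the index offsets line up across the whole range $k \inn{1}{T-1}$, including the two special endpoints. Once the coefficient identities $\mathcal{E}_k$, $\mathcal{C}_k$ (and the $k=1$ initialization) are verified to coincide with OGM-G's, an induction on $k$ — with base cases $y_1 = x_0$ and $y_2$ — shows that the two schemes produce identical $y_k$, $g_k = f'(y_k)$, and $x_k$ for all $k$, hence in particular the same output $y_T$. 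I would present this as: (i) state the OGM-G extrapolated recursion and its coefficients from \cite{ref_011}; (ii) invoke \eqref{label_033} with $L_k \equiv L$ and the OGM-G weights; (iii) simplify $\mathcal{E}_k, \mathcal{C}_k$ to the OGM-G coefficients using \eqref{label_094}; (iv) check the $k \le 1$ initialization; (v) conclude by induction.
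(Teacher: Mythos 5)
Your proposal follows essentially the same route as the paper's proof: verify the $k=1$ boundary case $y_2 = x_1 - b_1 a_1 B^{-1}g_1$ against OGM-G's first step, then substitute the weights \eqref{label_096}--\eqref{label_097} into the template coefficients $\mathcal{E}_k = b_k/b_{k-1}$ and $\mathcal{C}_k = b_k(L a_k - 1/b_{k-1})$ from \eqref{label_033} and simplify via \eqref{label_094} to recover the OGM-G momentum and correction factors. The only cosmetic difference is that the paper works from the form $b_k = \tfrac{1}{LA_{T-1}}\theta_{k,T}^2(2\theta_{k,T}-1)$ in \eqref{label_097} rather than the fourth-power differences of \eqref{label_098}, which makes the cancellations slightly more direct (and sidesteps the sign bookkeeping in your factorization of $\theta_{k,T}^4 - \theta_{k-1,T}^4$), but the two are algebraically identical.
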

\begin{proof}
OGM-G updates the sequence $(y_k)_{k \inn{1}{T}}$ starting with $y_1 = x_0$ and continuing for all $k \inn{1}{T - 1}$ with
\begin{equation} \label{label_101}
y_{k + 1} = x_k + \frac{(\theta_{k - 1,T} - 1) (2 \theta_{k,T} - 1)}{\theta_{k - 1,T} (2 \theta_{k - 1,T} - 1)}(x_k - x_{k - 1}) + \frac{2 \theta_{k,T} - 1}{2 \theta_{k - 1,T} - 1} (x_k - y_k).
\end{equation}
When $k = 1$, \eqref{label_101} simplifies to
\begin{equation} \label{label_102}
\begin{aligned}
y_{2} & = x_1 - \left(1 + \frac{\theta_{0,T} - 1}{\theta_{0,T}}\right) \frac{2 \theta_{1,T} - 1}{2 \theta_{0,T} - 1} \frac{1}{L} B^{-1} g_1 \\
& = x_1 - \frac{2 \theta_{0,T} - 1}{\theta_{0,T}} \frac{2 \theta_{1,T} - 1}{2 \theta_{0,T} - 1} \frac{1}{L} B^{-1} g_1 = x_1 - \frac{2 \theta_{1,T} - 1}{L \theta_{0,T}} B^{-1} g_1.
\end{aligned}
\end{equation}
The corresponding update in Algorithm~\ref{label_099} is
\begin{equation} \label{label_103}
\begin{aligned}
y_{2} &= x_1 - b_1 a_1 B^{-1} g_1 = x_1 - \frac{1}{L} \theta_{1,T}^2 (2 \theta_{1,T} - 1) \frac{1}{\theta_{0,T} \theta_{1,T}^2} B^{-1} g_1 \\ & = x_1 - \frac{2 \theta_{1,T} - 1}{L \theta_{0,T}} B^{-1} g_1.
\end{aligned}
\end{equation}
Given the equivalence of \eqref{label_102} and \eqref{label_103}, in the following we need only consider the case when $k \inn{2}{T - 1}$, where the extrapolation and correction factors are respectively given by
\begin{equation} \label{label_104}
\begin{aligned}
\mathcal{E}_k = \frac{b_k}{b_{k - 1}} &= \frac{\theta_{k, T}^2 (2 \theta_{k, T} - 1)}{\theta_{k - 1, T}^2 (2 \theta_{k - 1, T} -1)} \overset{\eqref{label_094}}{=} \frac{(\theta_{k - 1, T}^2 - \theta_{k - 1, T}) (2 \theta_{k, T} - 1)}{\theta_{k - 1, T}^2 (2 \theta_{k - 1, T} -1)} \\ &= \frac{(\theta_{k - 1,T} - 1) (2 \theta_{k,T} - 1)}{\theta_{k - 1,T} (2 \theta_{k - 1,T} - 1)},
\end{aligned}
\end{equation}
\begin{equation} \label{label_105}
\begin{gathered}
\mathcal{C}_k = L a_k b_k - \frac{b_k}{b_{k - 1}} =
\frac{\theta_{k, T} (2 \theta_{k, T} - 1)}{\theta_{k - 1, T}} - \frac{(\theta_{k - 1,T} - 1) (2 \theta_{k,T} - 1)}{\theta_{k - 1,T} (2 \theta_{k - 1,T} - 1)}
= \frac{2 \theta_{k,T} - 1}{2 \theta_{k - 1,T} - 1}.
\end{gathered}
\end{equation}
The factors in \eqref{label_104} and \eqref{label_105} match those of OGM-G.
\end{proof}

For completeness, we write down the extrapolation based form of OGM-G using our notation in Algorithm~\ref{label_099-extra}.
\begin{algorithm}[h]
\caption{Extrapolation based form of Algorithm~\ref{label_099}}
\label{label_099-extra}
\begin{algorithmic}[1]
\STATE \textbf{Input:} $x_0 \in \mathbb{E}$, $T \geq 2$
\STATE \textbf{Output:} $y_T$, $g_T$, optionally $x_T$
\STATE Precompute the entire sequence $(\theta_{k, T})_{k \inn{0}{T}}$ using \eqref{label_093}
\FOR{$k = 0, \ldots{}, T - 1$}
\STATE $y_{k + 1} = \left\{\begin{array}{ll} x_0, & k = 0 \\
x_k + \frac{(\theta_{k - 1,T} - 1) (2 \theta_{k,T} - 1)}{\theta_{k - 1,T} (2 \theta_{k - 1,T} - 1)}(x_k - x_{k - 1}) + \frac{2 \theta_{k,T} - 1}{2 \theta_{k - 1,T} - 1} (x_k - y_k), & k \geq 1\end{array}\right.$ \label{label_099-extra_y}
\STATE $g_{k + 1} = f'(y_{k + 1})$
\STATE $x_{k + 1} = y_{k + 1} - \frac{1}{L} B^{-1} g_{k + 1}$
\ENDFOR
\end{algorithmic}
\end{algorithm}

We next write Algorithm~\ref{label_099} in auxiliary sequence based form using
\begin{equation} \label{label_106}
d_k \overset{\eqref{label_046}}{=} \frac{b_{k, T}}{B_{k, T}} \overset{\eqref{label_028}}{=}1 - \frac{B_{k + 1, T}}{B_{k, T}} \overset{\eqref{label_098}}{=} 1 - \frac{\theta_{k + 1, T}^4}{\theta_{k, T}^4}, \quad k \inn{1}{T - 2}.
\end{equation}
Because $v_0 = x_0$, we can extend \eqref{label_106} to the first iteration $k = 0$ without altering the update in line~\ref{label_050} of Algorithm~\ref{label_048}. The auxiliary sequence weights also become
\begin{equation}
c_{k + 1} \overset{\eqref{label_046}}{=} \frac{1}{L} + a_{k + 1} B_{k + 1, T} \overset{\eqref{label_096}}{=} \frac{1}{L} + \frac{A_{T - 1}}{\theta_{k, T} \theta_{k + 1, T}^2} \frac{\theta_{k + 1, T}^4}{L A_{T - 1}} = \frac{\theta_{k,T} + \theta_{k + 1,T}^2}{L \theta_{k,T}}.
\end{equation}
In light of \eqref{label_094} we have
\begin{equation} \label{label_107}
c_1 = \frac{\theta_{0,T} + \theta_{1,T}^2}{L \theta_{0,T}} = \frac{\frac{1}{2}(\theta_{0,T}^2 - \theta_{0,T}) + \theta_{0,T}}{L \theta_{0,T}} = \frac{\theta_{0,T}+1}{2 L}.
\end{equation}
whereas
\begin{equation} \label{label_108}
c_{k + 1} = \frac{\theta_{k,T} + \theta_{k + 1,T}^2}{L \theta_{k,T}} = \frac{\theta_{k,T}^2}{L \theta_{k,T}} = \frac{\theta_{k,T}}{L}, \quad k \inn{1}{T - 1}.
\end{equation}
We can thus list this one auxiliary sequence form in Algorithm~\ref{label_099_aux1}. The update rules in Algorithm~\ref{label_099_aux1} indeed match those listed in \cite{ref_014}.

\begin{algorithm}[h]
\caption{Auxiliary sequence form of Algorithm~\ref{label_099}}
\label{label_099_aux1}
\begin{algorithmic}[1]
\STATE \textbf{Input:} $x_0 \in \mathbb{E}$, $T \geq 2$
\STATE \textbf{Output:} $y_T = v_{T - 1}$
\STATE Precompute the entire sequence $(\theta_{k, T})_{k \inn{0}{T}}$ using \eqref{label_093}
\STATE $v_0 = x_0$ \label{label_099_aux1_s0}
\FOR{$k = 0, \ldots{}, T - 2$}
\STATE $y_{k + 1} = \frac{\theta_{k + 1, T}^4}{\theta_{k, T}^4} x_k + \left(1 - \frac{\theta_{k + 1, T}^4}{\theta_{k, T}^4} \right) v_k$ \label{label_099_aux1_y}
\STATE $g_{k + 1} = f'(y_{k + 1})$
\STATE $x_{k + 1} = y_{k + 1} - \frac{1}{L} B^{-1} g_{k + 1}$
\STATE $v_{k + 1} = \left\{\begin{array}{ll} v_0 - \frac{\theta_{0,T} + 1}{2 L} B^{-1} g_{1}, & k = 0 \\ v_k - \frac{\theta_{k,T}}{L} B^{-1} g_{k + 1}, & k \geq 1\end{array}\right.$
\ENDFOR
\end{algorithmic}
\end{algorithm}

The final form for Algorithm~\ref{label_099} that we explore is the two auxiliary sequence one introduced in \cite{ref_003}. We adopt the notation in \cite{ref_003} and design the update rules to use the sequences $a_k$ and $A_k$. We thus have
\begin{equation} \label{label_112}
\begin{gathered}
\bar{c}_{k + 1} \overset{\eqref{label_057}}{=} A_k b_k - a_{k + 1} B_{k + 1, T} = \frac{A_{T - 1}}{\theta_{k, T}^2} \frac{\theta_{k, T}^2 (2 \theta_{k, T} - 1)}{L A_{T - 1}} - \frac{A_{T - 1}}{\theta_{k, T} \theta_{k + 1, T}^2}\frac{\theta_{k + 1, T}^4}{L A_{T - 1}}
\\ = \frac{1}{L}\left(2 \theta_{k, T} - 1 - \frac{\theta_{k + 1, T}^2}{\theta_{k, T}}\right)
\overset{\eqref{label_094}}{=} \frac{\theta_k}{L} \overset{\eqref{label_095}}{=} \frac{A_{k + 1}}{L a_{k + 1}}, \quad k \inn{1}{T - 2}.
\end{gathered}
\end{equation}
The sequence $v_k$ is updated in the same way as before, but we now write the step sizes in \eqref{label_107} and \eqref{label_108} as
\begin{equation} \label{label_113}
c_1 \overset{\eqref{label_107}}{=} \frac{1}{2L}\left(\frac{A_1}{a_1} + 1 \right) = \frac{A_1 + a_1}{2 L a_{1}},\quad c_{k + 1} \overset{\eqref{label_108}}{=} \frac{A_{k + 1}}{L a_{k + 1}}, \quad k \inn{1}{T - 2}.
\end{equation}
Substituting the expressions for $A_0$ in \eqref{label_095}, the weights $a_{k + 1}$ in \eqref{label_096} as well as for $c_{k + 1}$ and $\bar{c}_{k + 1}$ in \eqref{label_113} and \eqref{label_112}, respectively, and extending the update in \eqref{label_054} to $k = 0$ yields the form listed in Algorithm~\ref{label_114}. Our Algorithm~\ref{label_114} notably differs from OGM-G-DW~\cite{ref_003} in the way the first iteration is handled, which explains why OGM-G has a worst-case rate that is higher by a factor of $4$ than OGM-G-DW.

Note that altering value of $A_{T - 1}$ does not change the behavior of Algorithm~\ref{label_114} in any way and it is best to set $A_{T - 1} = 1$. However, for compatibility with \cite{ref_003}, we leave $A_{T - 1}$ as an optional input parameter.

\begin{algorithm}[h]
\caption{Two auxiliary sequence form of Algorithm~\ref{label_099}}
\label{label_114}
\begin{algorithmic}[1]
\STATE \textbf{Input:} $x_0 \in \mathbb{E}$, $T \geq 2$, optionally $A_{T - 1} > 0$ or otherwise set $A_{T - 1} = 1$
\STATE \textbf{Output:} $y_T = v_{T - 1}$
\STATE Precompute the entire sequence $(\theta_{k, T})_{k \inn{0}{T}}$ using \eqref{label_093}
\STATE $A_0 = \frac{2 A_{T - 1}}{\theta_{0, T}^2}$
\STATE $v_0 = x_0$
\STATE $s_0 = 0$
\FOR{$k = 0, \ldots{}, T - 2$}
\STATE $a_{k + 1} = \frac{A_{T - 1}}{\theta_{k, T} \theta_{k + 1, T}^2}$
\STATE $A_{k + 1} = A_k + a_{k + 1}$
\STATE $y_{k + 1} = \frac{1}{A_{k + 1}} (A_k x_k + a_{k + 1} v_k) - \frac{1}{L a_{k + 1}} s_k $\label{label_114_y}
\STATE $g_{k + 1} = f'(y_{k + 1})$
\STATE $x_{k + 1} = y_{k + 1} - \frac{1}{L} B^{-1} g_{k + 1}$
\STATE $s_{k + 1} = s_k + a_{k + 1} g_{k + 1}$
\STATE $v_{k + 1} = \left\{\begin{array}{ll} v_0 - \frac{A_1 + a_1}{2 L a_1} g_{1}, & k = 0 \\ v_k - \frac{A_{k + 1}}{L a_{k + 1}} g_{k + 1}, & k \geq 1\end{array}\right.$
\ENDFOR
\end{algorithmic}
\end{algorithm}

\subsection{Convergence analysis} \label{label_116}

To establish a worst-case rate for Algorithm~\ref{label_099} as a simple non-recursive expression, we first need to establish some limits on the behavior of the sequence $(\theta_{k, T})_{k \inn{0}{T}}$.

\begin{lemma} \label{label_117}
The sequence $(\theta_{k, T})_{k \inn{0}{T}}$ can be bounded as follows:
\begin{gather}
\theta_{k, T} > \theta_{k + l, T} + \frac{l}{2}, \quad k \inn{0}{T - 1}, \quad l \inn{1}{T - k} \label{label_118} \\
\theta_{0, T} > \frac{T}{\sqrt{2}} + \frac{1}{2}, \label{label_119} \\
2 \theta_{k, T}^2 < \theta_{0, T} \left( \theta_{0, T} - \frac{k}{\sqrt{2}} \right), \quad k \inn{1}{T - 1}. \label{label_120}
\end{gather}
\end{lemma}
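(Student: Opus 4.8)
The plan is to prove the three bounds in the order listed, since \eqref{label_119} and \eqref{label_120} both rely on \eqref{label_118}. For \eqref{label_118}, I would first establish the single-step version $\theta_{k,T} > \theta_{k+1,T} + \tfrac12$ for $k \inn{1}{T-1}$ directly from the recursion $\theta_{k,T} = \tfrac12(1 + \sqrt{1 + 4\theta_{k+1,T}^2})$: indeed $\sqrt{1 + 4\theta_{k+1,T}^2} > \sqrt{4\theta_{k+1,T}^2} = 2\theta_{k+1,T}$ (using $\theta_{k+1,T} > 0$, which holds for $k+1 \le T-1$; the edge case $k = T-1$ gives $\theta_{T-1,T} = 1 > 0 + \tfrac12$ separately). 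For the $k = 0$ step, the recursion has an $8$ instead of a $4$, so $\theta_{0,T} = \tfrac12(1 + \sqrt{1 + 8\theta_{1,T}^2}) > \tfrac12(1 + \sqrt{4\theta_{1,T}^2}) = \theta_{1,T} + \tfrac12$ as well. Then the general multi-step bound \eqref{label_118} follows by summing the single-step inequalities telescopically (a trivial induction on $l$), noting strictness is preserved. The one subtlety is the $l = T-k$ case where the chain passes through $\theta_{T,T} = 0$, but this is fine since each individual step inequality $\theta_{j,T} > \theta_{j+1,T} + \tfrac12$ holds for every $j \inn{0}{T-1}$, including $j = T-1$.

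For \eqref{label_119}, I would apply \eqref{label_118} with $k = 0$ and $l = T$, giving $\theta_{0,T} > \theta_{T,T} + \tfrac{T}{2} = \tfrac{T}{2}$ — but this gives $\tfrac{T}{2}$, not $\tfrac{T}{\sqrt2} + \tfrac12$, so the crude telescoping is too weak. The fix is to exploit the fact that the first step contributes more: from the $k=0$ recursion, $\theta_{0,T} = \theta_{0,T}^2 - 2\theta_{1,T}^2$ (equation \eqref{label_094}), hence $\theta_{0,T}^2 = \theta_{0,T} + 2\theta_{1,T}^2 > 2\theta_{1,T}^2$. Combining $\theta_{0,T} > \sqrt{2}\,\theta_{1,T}$ with $\theta_{1,T} > \theta_{T,T} + \tfrac{T-1}{2} = \tfrac{T-1}{2}$ from \eqref{label_118} yields $\theta_{0,T} > \tfrac{T-1}{\sqrt2}$, still slightly off. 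A cleaner route: from $\theta_{0,T}^2 - \theta_{0,T} = 2\theta_{1,T}^2$ and $\theta_{1,T} > \tfrac{T-1}{2}$ we get $\theta_{0,T}(\theta_{0,T} - 1) > \tfrac{(T-1)^2}{2}$; solving the quadratic and simplifying should land exactly on $\theta_{0,T} > \tfrac{T}{\sqrt2} + \tfrac12$ (perhaps after also using $\theta_{1,T} > \tfrac12 + \theta_{2,T} > \dots$ more carefully, i.e. $\theta_{1,T} \ge \tfrac{T-1}{2} + \theta_{T,T}$ but one may need $\theta_{1,T} > \tfrac{T-1}{2}$ strictly plus a little room from $\theta_{1,T}$'s own recursion). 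I would verify the constant by checking that equality-type asymptotics $\theta_{k,T} \approx \tfrac{T-k}{\sqrt2}$ for the $4$-recursion and $\theta_{0,T} \approx \sqrt2 \cdot \tfrac{T-1}{\sqrt2} = T-1$... hmm, so actually the sharp leading behavior needs care — this pinning-down of the exact additive constant $\tfrac12$ is where I expect the bookkeeping to be most delicate.

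For \eqref{label_120}, the target is $2\theta_{k,T}^2 < \theta_{0,T}(\theta_{0,T} - \tfrac{k}{\sqrt2})$ for $k \inn{1}{T-1}$. I would use the telescoped consequence of \eqref{label_094}: summing $\theta_{j,T} = \theta_{j,T}^2 - \theta_{j+1,T}^2$ over $j = 1,\dots,k-1$ gives $\theta_{1,T}^2 - \theta_{k,T}^2 = \sum_{j=1}^{k-1}\theta_{j,T}$, and the $j=0$ term gives $\theta_{0,T}^2 - 2\theta_{1,T}^2 = \theta_{0,T}$, i.e. $2\theta_{1,T}^2 = \theta_{0,T}^2 - \theta_{0,T}$. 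Hence $2\theta_{k,T}^2 = 2\theta_{1,T}^2 - 2\sum_{j=1}^{k-1}\theta_{j,T} = \theta_{0,T}^2 - \theta_{0,T} - 2\sum_{j=1}^{k-1}\theta_{j,T}$. It then suffices to show $\theta_{0,T} + 2\sum_{j=1}^{k-1}\theta_{j,T} > \tfrac{k}{\sqrt2}\theta_{0,T}$, equivalently (for $k \ge 1$) $2\sum_{j=1}^{k-1}\theta_{j,T} > \big(\tfrac{k}{\sqrt2} - 1\big)\theta_{0,T}$. Using \eqref{label_118} in the form $\theta_{j,T} > \theta_{0,T} - \tfrac{j}{2}$... no, \eqref{label_118} reads $\theta_{0,T} > \theta_{j,T} + \tfrac{j}{2}$, i.e. $\theta_{j,T} < \theta_{0,T} - \tfrac{j}{2}$, the wrong direction — so I instead need a \emph{lower} bound on $\theta_{j,T}$ in terms of $\theta_{0,T}$. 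From \eqref{label_118} with base point $j$ and step $T-j$, $\theta_{j,T} > \tfrac{T-j}{2}$, combined with \eqref{label_119}'s type of bound, or better: since $\theta_{0,T} < \text{(something)}\cdot$ — actually the natural lower bound is $\theta_{j,T} \ge \theta_{0,T} \cdot c - (\text{const})$ obtained by inverting the near-linear decay. I would likely first prove an auxiliary upper bound $\theta_{j,T} < \tfrac{T-j}{\sqrt2} + C$ for a suitable small $C$ (mirroring \eqref{label_119}), then \eqref{label_120} drops out by substituting into $2\theta_{k,T}^2 = \theta_{0,T}^2 - \theta_{0,T} - 2\sum_{j=1}^{k-1}\theta_{j,T}$ and bounding the sum below via $\theta_{j,T} > \tfrac{T-j}{2}$. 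The main obstacle throughout is matching the precise constants $\tfrac{1}{\sqrt2}$ and $\tfrac12$ rather than just the orders — the $4$-recursion has $\theta_{k,T} \sim \tfrac{T-k}{\sqrt2}$ asymptotically while the single-step telescoping only certifies slope $\tfrac12$, so the sharp bounds must come from the quadratic relation $\theta^2 = \theta + \theta_{\text{next}}^2$ (or $+2\theta_{\text{next}}^2$), not from linear telescoping alone; reconciling these two rates is the crux.
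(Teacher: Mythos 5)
Your handling of \eqref{label_118} is exactly the paper's argument (single-step bound by dropping the $1$ under the square root, the $8$-recursion at $k=0$ only helping, the edge case $\theta_{T-1,T}=1$, then telescoping), and it is complete. The gap is in \eqref{label_119}: you correctly diagnose that $\theta_{0,T}>\sqrt2\,\theta_{1,T}$ combined with $\theta_{1,T}>\theta_{T,T}+\tfrac{T-1}{2}=\tfrac{T-1}{2}$ only yields $\tfrac{T-1}{\sqrt2}$, but the quadratic repair you propose does not close it: from $\theta_{0,T}(\theta_{0,T}-1)>\tfrac{(T-1)^2}{2}$ one gets $\theta_{0,T}>\tfrac12+\tfrac12\sqrt{1+2(T-1)^2}$, and $\sqrt{1+2(T-1)^2}>\sqrt2\,T$ would require $3>4T$, which is false. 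The two missing half-units are recovered as follows: (i) telescope only down to index $T-1$, where $\theta_{T-1,T}=1$ rather than $\tfrac12+\theta_{T,T}=\tfrac12$, giving the sharper bound $\theta_{1,T}\geq 1+\tfrac{T-2}{2}=\tfrac{T}{2}$ (with $T=2$ checked separately); and (ii) keep the $1$ under the square root in the $k=0$ recursion, so $\theta_{0,T}=\tfrac12\bigl(1+\sqrt{1+8\theta_{1,T}^2}\bigr)>\tfrac12+\sqrt2\,\theta_{1,T}\geq\tfrac12+\tfrac{T}{\sqrt2}$. You circle around both ideas ("a little room from $\theta_{1,T}$'s own recursion") but never assemble them.

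For \eqref{label_120} your telescoped identity $2\theta_{k,T}^2=\theta_{0,T}^2-\theta_{0,T}-2\sum_{j=1}^{k-1}\theta_{j,T}$ is a genuinely different starting point, but the plan is left as a direction rather than an argument: you must lower-bound $\sum_{j<k}\theta_{j,T}$ against $\bigl(\tfrac{k}{\sqrt2}-1\bigr)\theta_{0,T}$, the only available lower bound $\theta_{j,T}>\tfrac{T-j}{2}$ has slope $\tfrac12$ while the right-hand side grows like $\tfrac{T}{2\sqrt2}$ per unit of $k$ scaled by $\theta_{0,T}\approx\tfrac{T}{\sqrt2}$, and the comparison becomes borderline for $k$ near $T-1$ where the lower-order terms you have not tracked decide the outcome; you would also need the auxiliary upper bound $\theta_{0,T}<\tfrac{T}{\sqrt2}+C$ that you mention but do not establish. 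The paper sidesteps all of this with a short direct argument: combining the $k=0$ step with telescoping gives $\theta_{0,T}>\sqrt2\,\theta_{k,T}+\tfrac{k}{\sqrt2}-\delta$ with $\delta=\tfrac{\sqrt2-1}{2}$, and since both factors are positive one bounds $\theta_{0,T}\bigl(\theta_{0,T}-\tfrac{k}{\sqrt2}\bigr)>\bigl(\sqrt2\,\theta_{k,T}+\tfrac{k}{\sqrt2}-\delta\bigr)\bigl(\sqrt2\,\theta_{k,T}-\delta\bigr)=2\theta_{k,T}^2+(k-2\sqrt2\delta)\bigl(\theta_{k,T}-\tfrac{\delta}{\sqrt2}\bigr)-\delta^2$, whose remainder is positive because $k\geq1$ and $\theta_{k,T}\geq1$. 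To make your proposal a proof you would need either to adopt this product expansion or to actually carry out the summation bookkeeping you defer.
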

\begin{proof}
For all $k \inn{0}{T-2}$ we have
\begin{equation} \label{label_121}
\theta_{k, T} \overset{\eqref{label_093}}\geq \frac{1}{2}\left( 1 + \sqrt{1 + 4 \theta_{k + 1, T}^2} \right) > \frac{1}{2}\left( 1 + \sqrt{4 \theta_{k + 1, T}^2} \right) = \theta_{k + 1, T} + \frac{1}{2}.
\end{equation}
Because $\theta_{T-1,T} = \theta_{T,T} + 1$ we have that \eqref{label_121} holds also for $k = T - 1$.
Iterating \eqref{label_121} over this extended range yields \eqref{label_118}.

When $T \geq 3$, taking \eqref{label_118} with $k = 1$ and $l = T - 2$ we get
\begin{equation}
\theta_{1, T} \geq \theta_{T - 1, T} + \frac{T - 2}{2} = \frac{T}{2}.
\end{equation}
When $T = 2$ we have $\theta_{1, T} = 1 = \frac{T}{2}$. Hence for all $T \geq 2$ it holds that
\begin{equation} \label{label_122}
\theta_{0, T} = \frac{1}{2}\left( 1 + \sqrt{1 + 8 \theta_{1, T}^2} \right) > \frac{1}{2} + \sqrt{2} \theta_{1, T} \geq \frac{T}{\sqrt{2}} + \frac{1}{2}.
\end{equation}

Similarly, for all $k \geq 1$ we have
\begin{equation} \label{label_123}
\theta_{0, T} \overset{\eqref{label_122}}{>} \sqrt{2} \theta_{1, T} + \frac{1}{2} \overset{\eqref{label_118}}{\geq} \sqrt{2} \left( \theta_{k, T} + \frac{k - 1}{2} \right) + \frac{1}{2} = \sqrt{2} \theta_{k,T} + \frac{k}{\sqrt{2}} - \delta,
\end{equation}
where $\delta \defq \frac{\sqrt{2} - 1}{2} \approx 0.20711$.

For all $k \inn{1}{T - 1}$ we can ensure that
\begin{equation} \label{label_124}
\theta_{0, T} > \frac{T}{\sqrt{2}} > \frac{k}{\sqrt{2}}, \quad
k \geq 1 > (2 \sqrt{2} + 1) \delta, \quad
\theta_{k, T} \geq 1 > \left(\frac{1}{\sqrt{2}} + 1 \right) \delta.
\end{equation}
We finally obtain
\begin{equation}
\begin{gathered}
\theta_{0, T} \left( \theta_{0, T} - \frac{k}{\sqrt{2}} \right) \overset{\eqref{label_123}}{>}
\left( \sqrt{2} \theta_{k,T} + \frac{k}{\sqrt{2}} - \delta \right) \left( \sqrt{2} \theta_{k,T} - \delta \right) \\
= 2 \theta_{k, T}^2 + (k - 2 \sqrt{2} \delta)\left(\theta_{k,T} - \frac{\delta}{\sqrt{2}}\right) - \delta^2 \overset{\eqref{label_124}}{>} 2 \theta_{k, T}^2.
\end{gathered}
\end{equation}
\end{proof}
With the results in Lemma~\ref{label_117} we can analyze the worst-case behavior of OGM-G. Note that to produce $y_T$ we need to perform only $T-1$ gradient evaluations. Our notation thus differs from \cite{ref_011}, which explains why we have $T$ instead of $T + 1$ on the right-hand side of \eqref{label_119}. From \eqref{label_084} we obtain the worst-case rate
\begin{equation} \label{label_125}
\| g_T \|^2 \overset{\eqref{label_084}}{\leq} \frac{L A_0}{A_{T - 1}} (f(x_0) - f(x_T)) \overset{\eqref{label_095}}{=} \frac{2 L}{\theta_{0, T}^2} (f(x_0) - f(x_T)) \overset{\eqref{label_119}}{<} \frac{4 L}{T^2} (f(x_0) - f(x_T)).
\end{equation}
The result in \eqref{label_125} is consistent with the previous findings in \cite{ref_011}, confirming the equivalence between Algorithm~\ref{label_099} and OGM-G.

Our construction of Algorithm~\ref{label_099} relied on providing the best rate for the last iterate $y_T$ as given by \eqref{label_084}. It is worth inquiring how this method behaves in all the iterations leading up to the final result. Interestingly, the answer takes the following simple form.

\begin{proposition} \label{prop:midway-smooth}
The state variables of Algorithm~\ref{label_099} satisfy at runtime
\begin{equation} \label{label_044_B_k}
B_{k, T} \| s_{k} \|_*^2 \leq A_0 (f(x_0) - f(x_k)), \quad k \inn{1}{T-1}.
\end{equation}
\end{proposition}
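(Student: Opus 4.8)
The plan is to replay the telescoping derivation that produced \eqref{label_079} and \eqref{label_084}, but to cut the accumulation off at iteration $k$ and then invoke the index-shifted version of the master identity \eqref{label_072}. The case $k = 1$ is immediate: $s_1 = a_1 g_1$, so \eqref{label_080} gives $B_{1, T}\|s_1\|_*^2 = a_1^2 B_{1, T}\|g_1\|_*^2 = \frac{A_0}{2 L}\|g_1\|_*^2$, while the descent rule of \eqref{label_078}, read at the first iteration where $y_1 = x_0$, gives $\frac{1}{2 L}\|g_1\|_*^2 \le f(x_0) - f(x_1)$; multiplying by $A_0 > 0$ closes this case. Assume henceforth $k \inn{2}{T - 1}$.

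For such $k$, I would carry out verbatim the steps from \eqref{label_073} to \eqref{label_079}, with every occurrence of $T$ replaced by $k$: sum \eqref{label_073} weighted by $A_j$ over $j \inn{1}{k - 1}$; separately, apply \eqref{label_002} with $x = y_k$ and $y = y_{j + 1}$, multiply by $a_{j + 1}$, and sum over $j \inn{0}{k - 2}$; add the two inequalities, cancel the common $f(y_j)$ terms using $A_1 f(y_1) = A_0 f(x_0) + a_1 f(y_1)$, and fold in the descent inequality \eqref{label_078} at $x_k$ exactly as in the passage from \eqref{label_077} to \eqref{label_079}. The outcome is the $T \mapsto k$ image of \eqref{label_079},
\[
A_0 (f(x_0) - f(x_k)) \ge G^{(1)}_k + G^{(2)}_k + \frac{A_1 + a_1}{2 L}\|g_1\|_*^2 + \sum_{i = 2}^{k - 1}\frac{A_i}{L}\|g_i\|_*^2 + \frac{A_{k - 1}}{L}\|g_k\|_*^2,
\]
where $G^{(1)}_k$ and $G^{(2)}_k$ are \eqref{label_063} with the upper summation limit $T$ replaced by $k$. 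The computation \eqref{label_064}--\eqref{label_072} is purely algebraic, so it survives this substitution and yields $G^{(1)}_k + G^{(2)}_k = -\sum_{i = 1}^{k - 1} a_i \left(\frac{1}{L} + a_i B_{i, k}\right)\|g_i\|_*^2 + D_k$, where $D_k$ is the corresponding cross-term double sum built from the accumulators $B_{\cdot, k}$, carrying the boundary coefficient $A_{k - 1} b_{k - 1} - \frac{1}{L}$ on the terms $\langle g_i, B^{-1} g_k \rangle$.

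The heart of the argument is to show that, once we feed in the defining relations \eqref{label_080}, \eqref{label_081} and \eqref{label_082} of Algorithm~\ref{label_099}, all these coefficients collapse. From \eqref{label_082} we have $A_{j - 1} b_{j - 1} - \frac{1}{L} = 2 a_j B_{j, T}$ for every $j \ge 2$, and \eqref{label_028} supplies the telescoping identity $B_{j, T} - B_{j, k} = B_{k, T}$ for $j \le k$. Substituting, every coefficient of $D_k$ --- interior and boundary alike --- becomes $2 a_i a_j B_{k, T}$, so $D_k = B_{k, T}\bigl(\|s_k\|_*^2 - \sum_{i = 1}^{k} a_i^2 \|g_i\|_*^2\bigr)$ after identifying $s_k = \sum_{i = 1}^{k} a_i g_i$ through \eqref{label_044}. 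Inserting this $D_k$ into the displayed inequality and using $B_{i, k} + B_{k, T} = B_{i, T}$ once more, the coefficient of $\|g_1\|_*^2$ reduces to $\frac{A_0}{2 L} - a_1^2 B_{1, T}$, that of $\|g_i\|_*^2$ for $i \inn{2}{k - 1}$ to $\frac{A_{i - 1}}{L} - a_i^2 B_{i, T}$, and that of $\|g_k\|_*^2$ to $\frac{A_{k - 1}}{L} - a_k^2 B_{k, T}$; all three vanish by \eqref{label_080} and \eqref{label_081}. What remains on the right-hand side is exactly $B_{k, T}\|s_k\|_*^2$, giving \eqref{label_044_B_k}.

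I expect the only real difficulty to be bookkeeping: checking that the index ranges in the truncated forms of \eqref{label_079} and \eqref{label_072} are precisely the $T \mapsto k$ images of the originals --- in particular that the $j = 0$ summand of $G^{(1)}_k$ still vanishes because $y_1 = x_0$, that the $j = k$ boundary term of $D_k$ is governed by \eqref{label_082} read at index $k - 1$, and that $B_{j, T} = B_{j, k} + B_{k, T}$ is applied consistently wherever a $B_{\cdot, k}$ appears. Once this accounting is in place, the three cancellations are forced by \eqref{label_080}--\eqref{label_082}, which is precisely why those conditions were imposed when Algorithm~\ref{label_099} was built.
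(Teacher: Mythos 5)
Your proposal is correct and follows essentially the same route as the paper: the paper likewise disposes of $k=1$ via \eqref{label_080} and \eqref{label_029}, and for $k \inn{2}{T-1}$ substitutes $T \mapsto k$ in the master inequality \eqref{eq:ogm-g-main}, rewrites the truncated accumulators via $B_{i,k} = B_{i,T} - B_{k,T}$, and lets \eqref{label_080}--\eqref{label_082} collapse every coefficient so that only $B_{k,T}\|s_k\|_*^2$ survives. Your version merely re-derives the truncated inequality from \eqref{label_073}--\eqref{label_079} instead of citing the $T \mapsto k$ image of \eqref{eq:ogm-g-main} directly, but the content and the cancellation bookkeeping are identical.
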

\begin{proof}
When $k = 1$, \eqref{label_080} renders \eqref{label_044_B_k} equivalent to \eqref{label_029}. When $k \inn{2}{T-1}$, substituting $T$ with $k$ in \eqref{eq:ogm-g-main} and applying $B_{i, k} = B_{i, T} - B_{k, T}$ we get
\begin{equation} \label{eq:ocgm-101}
\begin{gathered}
A_0 (f(x_0) - f(x_k)) \geq \left(\frac{A_0}{2 L} - a_1^2 (B_{1, T} - B_{k, T}) \right) \| g_{1} \|_*^2 \\
+ \sum_{i = 2}^{k - 1} \left( \frac{A_{i - 1}}{L} - a_i^2 (B_{i, T} - B_{k, T}) \right) \| g_{i} \|_*^2
+ \left(\frac{A_{k - 1}}{L} - a_k^2 (B_{i, T} - B_{k, T}) \right)\| g_{k} \|_*^2
\\ + \sum_{i = 1}^{k - 2} \left( A_{i} b_{i} - \frac{1}{L} - 2 a_{i + 1} (B_{i + 1, T} - B_{k, T}) \right) \langle s_i, B^{-1} g_{i + 1} \rangle \\
+ \left( A_{k - 1} b_{k - 1} - \frac{1}{L} - 2 a_k B_{k, T} + 2 a_k B_{k, T} \right) \langle s_{k - 1}, B^{-1} g_{k} \rangle, \quad k \inn{2}{T-1}.
\end{gathered}
\end{equation}
Taking into account that the weights and step sizes satisfy the equalities \eqref{label_080}, \eqref{label_081} and \eqref{label_082}, we can simplify \eqref{eq:ocgm-101} to take for all $k \inn{2}{T-1}$ the form
\begin{equation} \label{eq:ocgm-102}
\begin{gathered}
A_0 (f(x_0) - f(x_k)) \geq B_{k, T} \left( \sum_{i = 1}^{k - 1} a_i^2 \| g_{k} \|_*^2 + a_k^2 \| g_k \|_*^2 \right) \\ + B_{k, T} \left( 2 \sum_{i = 1}^{k - 2} a_{i + 1} \langle s_i, B^{-1} g_{i + 1} \rangle + 2 a_k \langle s_{k - 1}, B^{-1} g_{k} \rangle \right) \overset{\eqref{label_044}}{=} B_{k, T} \| s_k \|_*^2.
\end{gathered}
\end{equation}
\end{proof}
The inequality \eqref{label_120} in Lemma~\ref{label_117} allows us to transform Proposition~\ref{prop:midway-smooth} into a runtime worst-case rate analysis with a simple expression.
\begin{proposition} \label{label_127}
The averaged gradient, defined as the normalized $s_k$, namely
\begin{equation}
\bar{s}_k \defq \frac{1}{A_k - A_0} \sum_{i = 1}^{k} a_i g_i, \quad k \geq 1,
\end{equation}
has a runtime guarantee in squared dual norm given by
\begin{equation} \label{label_044_ogm_rate}
\| \bar{s}_k \|_*^2 < \frac{4 L}{k^2} (f(x_0) - f(x_k)), \quad k \inn{1}{T - 1}.
\end{equation}
\end{proposition}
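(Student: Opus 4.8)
The plan is to combine Proposition~\ref{prop:midway-smooth} with the closed-form expressions for the weight sequences in \eqref{label_095} and \eqref{label_098}, and then invoke the bound \eqref{label_120} from Lemma~\ref{label_117} to control the normalizing factor $A_k - A_0$. First I would note that, by the unrolling \eqref{label_044}, the averaged gradient is simply $\bar{s}_k = \frac{1}{A_k - A_0} s_k$, so that $\|\bar{s}_k\|_*^2 = \frac{1}{(A_k - A_0)^2}\|s_k\|_*^2$; since Proposition~\ref{prop:midway-smooth} gives $\|s_k\|_*^2 \leq \frac{A_0}{B_{k, T}}(f(x_0) - f(x_k))$, we obtain
\[
\|\bar{s}_k\|_*^2 \leq \frac{A_0}{(A_k - A_0)^2 B_{k, T}}\,(f(x_0) - f(x_k)), \quad k \inn{1}{T - 1}.
\]
The task thus reduces to showing that the scalar prefactor is strictly below $4L/k^2$.

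Next I would substitute the explicit values $A_0 = 2 A_{T-1}/\theta_{0, T}^2$ and $A_k = A_{T-1}/\theta_{k, T}^2$ from \eqref{label_095}, together with $B_{k, T} = \theta_{k, T}^4/(L A_{T-1})$ from \eqref{label_098}. This gives $A_k - A_0 = A_{T-1}(\theta_{0, T}^2 - 2\theta_{k, T}^2)/(\theta_{k, T}^2 \theta_{0, T}^2)$, and since $k \geq 1$ the inequality \eqref{label_120} yields $\theta_{0, T}^2 - 2\theta_{k, T}^2 > k\,\theta_{0, T}/\sqrt{2} > 0$, hence $A_k - A_0 > A_{T-1}\,k/(\sqrt{2}\,\theta_{k, T}^2\,\theta_{0, T})$. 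Inserting this strict lower bound together with the expressions for $A_0$ and $B_{k, T}$ into the prefactor, every occurrence of $A_{T-1}$, $\theta_{0, T}$ and $\theta_{k, T}$ cancels and the prefactor is bounded by exactly $4L/k^2$, with strict inequality because the bound on $A_k - A_0$ is strict. There is no well-definedness concern, as $A_k - A_0 = \sum_{i = 1}^{k} a_i > 0$ for $k \geq 1$.

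The computation is routine once the substitutions are made; the only non-trivial ingredient is the bound \eqref{label_120}, which is precisely what produces the $k^2$ in the denominator of the rate, and it has already been established in Lemma~\ref{label_117}. I therefore expect the main (mild) difficulty to be simply tracking the cancellations cleanly so that the constant comes out to be exactly $4L$ rather than something slightly larger.
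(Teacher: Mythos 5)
Your proposal is correct and follows essentially the same route as the paper's proof: both start from Proposition~\ref{prop:midway-smooth}, substitute the closed forms \eqref{label_095} and \eqref{label_087}/\eqref{label_098} for $A_0$, $A_k$ and $B_{k,T}$, and invoke \eqref{label_120} to produce the $k^2$ in the denominator, with all $A_{T-1}$ and $\theta$ factors cancelling to leave exactly $4L/k^2$. The only cosmetic difference is that you lower-bound $A_k - A_0$ before substituting, whereas the paper rewrites the full prefactor in terms of $\theta_{0,T}$ and $\theta_{k,T}$ first and then applies \eqref{label_120} to the denominator.
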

\begin{proof}
The result in \eqref{label_044_ogm_rate} is directly implied by Proposition~\ref{prop:midway-smooth} and \eqref{label_120} as follows:
\begin{equation}
\begin{gathered}
\| \bar{s}_k \|_*^2 \overset{\eqref{label_044_B_k}}{\leq} \frac{A_0 (f(x_0) - f(x_k))}{(A_k - A_0)^2 B_{k, T}}
\overset{\eqref{label_087}}{=} \frac{L A_0 A_k^2 (f(x_0) - f(x_k))}{(A_k - A_0)^2 A_{T - 1}} \\
\overset{\eqref{label_095}}{=} \frac{2 L \theta_{0, T}^2 (f(x_0) - f(x_k))}{(\theta_{0, T}^2 - 2 \theta_{k, T}^2)^2}
\overset{\eqref{label_120}}{<} \frac{2 L \theta_{0, T}^2 (f(x_0) - f(x_k))}{\left(\theta_{0, T}^2 - \theta_{0, T} \left( \theta_{0, T} - \frac{k}{\sqrt{2}}\right) \right)^2} \\
= \frac{2 L \theta_{0, T}^2}{\theta_{0, T}^2 \frac{k^2}{2}} (f(x_0) - f(x_k)) = \frac{4 L}{k^2} (f(x_0) - f(x_k)), \quad k \inn{1}{T - 1}.
\end{gathered}
\end{equation}
\end{proof}
Looking at both \eqref{label_044_ogm_rate} and \eqref{label_125} we can now understand how Algorithm~\ref{label_099} and the equivalent OGM-G operate at runtime. When $k = 1$ we have the descent rule given by \eqref{label_029}, a result equivalent to Proposition~\ref{prop:midway-smooth} but twice stronger than Proposition~\ref{label_127}. For all subsequent $k \inn{2}{T - 1}$, Proposition~\ref{label_127} reveals that the method actually minimizes the norm of the averaged gradient $\bar{s}_k$. During the last iteration, Algorithm~\ref{label_099} shifts to providing the same rate for $g_T$, given by \eqref{label_125}. The weights $a_k$ increase sharply at we approach the last iteration, explaining the instant one iteration transition from a weighted average of gradients to the last gradient.

\section{Efficiently minimizing the gradient mapping norm}
We now turn our attention to applying the lessons learned in the smooth unconstrained case to the broader problem of composite minimization.

\subsection{Dealing with adaptivity}
In the context of composite problems, the bounds allow for adaptivity.
Specifically, if we obtain a Lipschitz estimate $L_{k + 1}$ by a line-search procedure that ensures the descent rule
\begin{equation} \label{label_129}
f(x_{k + 1}) \leq f(y_{k + 1}) + \langle g_{k + 1}, x_{k + 1} - y_{k + 1} \rangle + \frac{L_{k + 1}}{2} \| x_{k + 1} - y_{k + 1} \|^2, \quad k \inn{0}{T - 1}.
\end{equation}
then we can instantiate Proposition~\ref{label_007} at $x = x_k$ and $x = x_T$, respectively to yield
\begin{align}
F(x_k) &\geq F(x_{k + 1}) + \frac{1}{2 L_{k + 1}} \| g_{k + 1} \|_*^2 + \langle g_{k + 1}, x_k - y_{k + 1} \rangle, \quad k \inn{0}{T-1}, \label{label_130} \\
F(x_T) &\geq F(x_{k + 1}) + \frac{1}{2 L_{k + 1}} \| g_{k + 1} \|_*^2 + \langle g_{k + 1}, x_T - y_{k + 1} \rangle, \quad k \inn{0}{T-2}. \label{label_131}
\end{align}
Note that the range of \eqref{label_131} excludes the redundant $k = T - 1$.

Multiplying every \eqref{label_130} with $A_k$ for $k \inn{0}{T-1}$ and adding them together gives
\begin{equation} \label{label_132}
A_0 F(x_0) + \sum_{k = 1}^{T - 1} a_k F(x_k) \geq A_{T - 1} F(x_T) + \sum_{k = 0}^{T - 1} \frac{A_k}{2 L_{k + 1}} \| g_{k + 1} \|_*^2 + G^{(1)}_T.
\end{equation}
Likewise, multiplying every \eqref{label_131} with $a_{k + 1}$ for $k \inn{0}{T-2}$ and taking the summation results in
\begin{equation} \label{label_133}
(A_{T - 1} - A_0) F(x_T) \geq \sum_{k = 1}^{T - 1} a_k F(x_k) + \sum_{k = 0}^{T - 2} \frac{a_{k + 1}}{2 L_{k + 1}} \| g_{k + 1} \|_*^2 + G^{(2)}_T.
\end{equation}
Adding together \eqref{label_132} with \eqref{label_133}, canceling the matching terms, we obtain
\begin{equation} \label{label_134}
A_0 (F(x_0) - F(x_T) \geq \sum_{k = 0}^{T - 2} \frac{A_{k + 1}}{2 L_{k + 1}} \| g_{k + 1} \|_*^2 + \frac{A_{T - 1}}{2 L_{T}} \| g_{T} \|_*^2 + G^{(1)}_T + G^{(2)}_T.
\end{equation}
Expanding $G^{(1)}_T + G^{(2)}_T$ using \eqref{label_072} finally produces
\begin{equation} \label{eq:ocgm-g-main}
\begin{gathered}
A_0 (F(x_0) - F(x_T)) \geq \sum_{k = 1}^{T - 1} \left( \frac{A_k}{2 L_k} - \frac{a_k}{L_k} - a_k^2 B_{k, T} \right) \| g_{k} \|_*^2 + \frac{A_{T - 1}}{2 L_T} \| g_T \|_*^2 \\
+ \sum_{k = 1}^{T - 2} \left( A_{k} b_{k} - \frac{1}{L_{k + 1}} - 2 a_{k + 1} B_{k + 1, T} \right) \langle s_k, B^{-1} g_{k + 1} \rangle \\+ \left( A_{T - 1} b_{T - 1} - \frac{1}{L_{T}} \right) \langle s_{T - 1}, B^{-1} g_{T} \rangle.
\end{gathered}
\end{equation}
The inequality \eqref{eq:ocgm-g-main} contains all the information we need to construct an adaptive method with convergence guarantees. We list a sufficient condition that is a direct consequence of \eqref{eq:ocgm-g-main} in the sequel.
\begin{proposition} \label{label_135}
If the square matrix $C \in \mathbb{R}^{T} \times \mathbb{R}^{T}$ is positive semidefinite, where $C$ is defined as
\begin{equation} \label{label_136}
C_{i,j} \defq \left\{\begin{array}{ll}
\frac{A_i}{2 L_i} - \frac{a_i}{L_i} - a_i^2 B_{i, T}, & i = j \leq T - 1, \\
C_{T, T} \geq 0, & i = j = T, \\
a_i \left( A_{j - 1} b_{j - 1} - \frac{1}{L_j} - 2 a_j B_{j, T} \right), & i < j \leq T - 1, \\
a_i \left( A_{T - 1} b_{T - 1} - \frac{1}{L_{T}} \right), & i < j = T, \\
0, & i > j,
\end{array} \right.
\end{equation}
then we obtain a guarantee on the last iterate composite gradient norm, given by
\begin{equation} \label{label_137}
\| g_T \|_*^2 \leq \frac{A_0}{\frac{A_{T - 1}}{2 L_T} - C_{T,T}} ( F(x_0) - F(x_T) ).
\end{equation}
\end{proposition}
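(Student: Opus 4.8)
The plan is to rewrite the right-hand side of \eqref{eq:ocgm-g-main} as a single quadratic form in the gradient-like quantities $g_1,\dots,g_T$ whose coefficient matrix is exactly $C$, and then invoke positive semidefiniteness. First I would use the unrolled identity $s_k=\sum_{i=1}^{k}a_ig_i$ from \eqref{label_044} to expand each inner product $\langle s_k,B^{-1}g_{k+1}\rangle$ appearing in \eqref{eq:ocgm-g-main} as $\sum_{i=1}^{k}a_i\langle g_i,B^{-1}g_{k+1}\rangle$, and likewise $\langle s_{T-1},B^{-1}g_T\rangle=\sum_{i=1}^{T-1}a_i\langle g_i,B^{-1}g_T\rangle$. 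Relabelling $j=k+1$, the coefficient multiplying $\langle g_i,B^{-1}g_j\rangle$ with $i<j$ is then $a_i\bigl(A_{j-1}b_{j-1}-\tfrac1{L_j}-2a_jB_{j,T}\bigr)$ when $j\leq T-1$ and $a_i\bigl(A_{T-1}b_{T-1}-\tfrac1{L_T}\bigr)$ when $j=T$, which are precisely the off-diagonal entries $C_{i,j}$ of \eqref{label_136}; the coefficients of $\|g_k\|_*^2$ for $k \inn{1}{T-1}$ already coincide with $C_{k,k}$.

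The remaining discrepancy concerns $\|g_T\|_*^2$, whose coefficient in \eqref{eq:ocgm-g-main} is $\tfrac{A_{T-1}}{2L_T}$ rather than $C_{T,T}$. Splitting $\tfrac{A_{T-1}}{2L_T}\|g_T\|_*^2=\bigl(\tfrac{A_{T-1}}{2L_T}-C_{T,T}\bigr)\|g_T\|_*^2+C_{T,T}\|g_T\|_*^2$ and using $\langle g_i,B^{-1}g_i\rangle=\|g_i\|_*^2$, inequality \eqref{eq:ocgm-g-main} rewrites as
\[
A_0\bigl(F(x_0)-F(x_T)\bigr)\ \geq\ \Bigl(\tfrac{A_{T-1}}{2L_T}-C_{T,T}\Bigr)\|g_T\|_*^2+\sum_{i=1}^{T}\sum_{j=i}^{T}C_{i,j}\langle g_i,B^{-1}g_j\rangle .
\]
To see that the double sum is nonnegative, set $w_i\defq B^{-1/2}g_i$ with $B^{-1/2}$ the symmetric positive-definite square root of $B^{-1}$, so that $\langle g_i,B^{-1}g_j\rangle=\langle w_i,w_j\rangle$; then $\sum_{i\leq j}C_{i,j}\langle g_i,B^{-1}g_j\rangle=\sum_{l=1}^{n}(\mathbf w^{(l)})^{\top}C\,\mathbf w^{(l)}$, where $\mathbf w^{(l)}\in\mathbb R^{T}$ gathers the $l$-th coordinates of $w_1,\dots,w_T$ (the strictly lower-triangular part of $C$ being zero, restricting the sum to $i\leq j$ changes nothing). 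Each term equals $(\mathbf w^{(l)})^{\top}\tfrac12(C+C^{\top})\mathbf w^{(l)}\geq 0$ precisely when $C$, i.e.\ its symmetric part, is positive semidefinite. Discarding the nonnegative double sum and dividing by $\tfrac{A_{T-1}}{2L_T}-C_{T,T}$ — which the stated bound implicitly requires to be positive — yields \eqref{label_137}.

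The only place where care is genuinely needed is the index bookkeeping of the first step: the cross-term coefficient has two distinct regimes ($j\leq T-1$ versus $j=T$), and one must keep them separate while reindexing the nested sums and check that assembling the upper-triangular array $C$ introduces neither omissions nor double counting. The positive-semidefiniteness argument for the quadratic form and the concluding division are then routine.
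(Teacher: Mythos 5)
Your proposal is correct and follows essentially the same route as the paper: expand the $\langle s_k, B^{-1}g_{k+1}\rangle$ terms in \eqref{eq:ocgm-g-main} via $s_k=\sum_{i\le k}a_ig_i$ to identify the coefficient array with $C$, split off $\bigl(\tfrac{A_{T-1}}{2L_T}-C_{T,T}\bigr)\|g_T\|_*^2$, and discard the remaining quadratic form by positive semidefiniteness (the paper phrases this as $\Tr(CQ)\ge 0$ with $Q$ the Gram matrix of the $g_i$, which is exactly your coordinate-wise sum $\sum_l(\mathbf w^{(l)})^{\top}C\,\mathbf w^{(l)}$). Your write-up in fact supplies the bookkeeping the paper's proof leaves implicit, including the correct remark that "positive semidefinite" for the upper-triangular $C$ must be read as positivity of the quadratic form, i.e.\ of its symmetric part, and that the divisor must be positive for the conclusion to be meaningful.
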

\begin{proof}
The inequality \eqref{eq:ocgm-g-main} can be refactored using \eqref{label_136} as
\begin{equation} \label{label_138}
A_0 (F(x_0) - F(x_T)) \geq \sum_{i = 1}^{T} \sum_{j = 1}^{T} C_{i, j} \langle g_i, B^{-1} g_j \rangle + \left(\frac{A_{T - 1}}{2 L_T} - C_{T,T} \right) \| g_T \|_*^2.
\end{equation}
We define $Q \in \mathbb{R}^T \times \mathbb{R}^T$ with $Q_{i,j} \defq \langle g_i, B^{-1} g_j \rangle$, $i,j \inn{1}{T}$. The desired result in \eqref{label_137} follows readily from \eqref{label_138} and the fact that $Q$ is positive semidefinite whereby
\begin{equation}
\sum_{i = 1}^{T} \sum_{j = 1}^{T} C_{i, j} \langle g_i, B^{-1} g_j \rangle = \Tr(C Q) \geq 0.
\end{equation}
\end{proof}
Proposition~\ref{label_135} allows for limited adaptivity in the Lipschitz estimates. The implementation of line-search while retaining an optimal $\mathcal{O}(1/k^2)$ rate on $\|g_T\|_*^2$ in \eqref{label_137} remains a very interesting open problem that we leave for future research.

\subsection{An optimized method}
The most aggressive convergence guarantee update occurs when we force $C = 0$ in Proposition~\ref{label_135}. We thus have
\begin{equation} \label{label_139}
\| g_T \|_*^2 \leq \frac{2 A_0 L_T}{A_{T - 1}} ( F(x_0) - F(x_T) ) \leq \frac{2 A_0 L_T}{A_{T - 1}} ( F(x_0) - F(x^*) ).
\end{equation}

However, the presence of the reverse accumulator terms in \eqref{label_136} entails that, during every iteration $k \inn{0}{T - 2}$, the algorithm must have access to the output of the line-search procedure of \emph{all subsequent} iterations. Such knowledge violates our oracle model and hence we restrict ourselves to the scenario in which $L_k = L_0$, $k \inn{1}{T}$, with $L_0$ being an initial guess of the Lipschitz constant. The estimate $L_0$ must satisfy the line-search condition at \emph{every} iteration. If $L_0$ is found invalid at any one iteration $k \inn{0}{T - 1}$, then we need to start over and multiplicatively increase it. Eventually, the value of $L_0$ will equal or surpass $L$, at which stage the line-search condition will always be satisfied. In the sequel we address the situation when the line-search condition passes at every iteration. We thus have
\begin{gather}
A_k b_k = \frac{1}{L_0} + 2 a_{k + 1} B_{k + 1, T}, \quad k \inn{1}{T-2}, \label{label_140}\\
A_{T - 1} b_{T - 1} = \frac{1}{L_0}, \label{label_141} \\
a_k (1 + L_0 a_{k} B_{k, T}) = \frac{A_{k}}{2}, \quad k \inn{1}{T-1}. \label{label_142}
\end{gather}
We extend the weight sequences in Algorithm~\ref{label_021} by imposing the values
\begin{gather}
a_T = A_{T - 1}, \quad A_T = A_{T - 1} + a_T = 2 A_{T - 1}, \label{label_143} \\
b_0 = \frac{1}{L_0 a_1}, \quad \bar{c}_1 = \frac{A_1}{2 L_0 a_1}. \label{label_144}
\end{gather}
The above notation allows us to formulate simple unified expressions for the weight sequences $a_k$ and $b_k$ as well as their accumulators in the following result.
\begin{proposition} \label{label_145}
Equalities \eqref{label_140}, \eqref{label_141} and \eqref{label_142} along with their extensions in \eqref{label_143} and \eqref{label_144} imply
\begin{gather}
a_k = \frac{a_{k + 1}}{A_{k + 1}} \left( \sqrt{a_{k + 1}^2 + A_k A_{k + 1}} - a_{k + 1} \right), \quad k \inn{1}{T - 1}, \label{label_146} \\
b_k = \frac{1}{L_0 a_{k + 1}}, \quad k \inn{0}{T - 1}, \label{label_147}\\
B_{k, T} = \frac{A_k - 2 a_k}{2 L_0 a_k^2}, \quad k \inn{1}{T}, \label{label_148} \\
c_k = \bar{c}_{k} = \frac{A_k}{2 L_0 a_{k}}, \quad k \inn{1}{T - 1}. \label{label_149}
\end{gather}
\end{proposition}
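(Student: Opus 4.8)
The plan is to establish the four identities by working \emph{backwards} in the index $k$, exactly as in the OGM-G case, since the structure of \eqref{label_140}--\eqref{label_142} mirrors \eqref{label_082}, \eqref{label_083}, \eqref{label_081} (up to the extra factor of $2$ coming from the composite descent rule). First I would prove \eqref{label_148}. At the top index, combining \eqref{label_141} with $B_{T-1,T}=b_{T-1}$ gives $B_{T-1,T}=\frac{1}{L_0 A_{T-1}}$, and using $A_{T-1}=a_T$ and $A_{T-1}=\frac{a_T}{2a_T}(\cdots)$ one checks this matches $\frac{A_{T-1}-2a_{T-1}}{2L_0 a_{T-1}^2}$ only after invoking \eqref{label_146}; alternatively, at the extended index $k=T$ the claim $B_{T,T}=\frac{A_T-2a_T}{2L_0a_T^2}=\frac{2A_{T-1}-2A_{T-1}}{2L_0a_T^2}=0$ holds trivially by \eqref{label_143}, which gives a clean base case. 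I would then run a downward induction: assuming \eqref{label_148} at $k+1$, use the accumulator recursion $B_{k,T}=b_k+B_{k+1,T}$ together with \eqref{label_140} (or \eqref{label_141} at the boundary) to write $B_{k,T}=\frac{1}{L_0A_k}+2a_{k+1}B_{k+1,T}/A_k \cdot A_k/\cdots$; substituting the inductive form of $B_{k+1,T}$ and simplifying with $A_{k+1}=A_k+a_{k+1}$ should collapse to $\frac{A_k-2a_k}{2L_0a_k^2}$, where the last simplification step secretly \emph{is} equation \eqref{label_142} rearranged. So \eqref{label_142} and \eqref{label_148} are really two faces of the same identity, and I would prove them together.

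Next I would extract \eqref{label_146}. Equation \eqref{label_142}, namely $a_k + L_0 a_k^2 B_{k,T} = \frac{A_k}{2}$, combined with the just-established \eqref{label_148} for $B_{k,T}$, reduces to a relation purely among $a_k$, $A_k$, $A_{k+1}$ (eliminating $B$); but to get a forward-usable \emph{recursion} expressing $a_k$ in terms of $a_{k+1}$, $A_k$, $A_{k+1}$, I would instead feed \eqref{label_148} at index $k+1$ back into \eqref{label_142}-at-$k$ after substituting $b_k$ via \eqref{label_140}. Concretely: \eqref{label_140} gives $A_k b_k = \frac{1}{L_0}+2a_{k+1}B_{k+1,T}$; plug in $B_{k+1,T}=\frac{A_{k+1}-2a_{k+1}}{2L_0a_{k+1}^2}$ to get $A_k b_k = \frac{1}{L_0}\big(1+\frac{A_{k+1}-2a_{k+1}}{a_{k+1}}\big)=\frac{1}{L_0}\cdot\frac{A_{k+1}-a_{k+1}}{a_{k+1}}=\frac{A_k}{L_0 a_{k+1}}$, which immediately yields \eqref{label_147} for $k\inn{1}{T-2}$; the boundary cases $k=T-1$ and $k=0$ follow from \eqref{label_141} with $a_T=A_{T-1}$ and from the definition $b_0=\frac{1}{L_0a_1}$ in \eqref{label_144}, giving \eqref{label_147} on the full range $k\inn{0}{T-1}$. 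Having \eqref{label_147}, I substitute $b_k=\frac{1}{L_0a_{k+1}}$ and the formula for $B_{k,T}$ into \eqref{label_142}; clearing denominators produces a quadratic in $a_k$ whose positive root is exactly the right-hand side of \eqref{label_146} (one checks the discriminant is $a_{k+1}^2+A_kA_{k+1}$ and that the sign of the square root is forced by positivity of $a_k$).

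Finally, \eqref{label_149}: by \eqref{label_046} we have $c_k=\frac{1}{L_k}+a_kB_{k,T}=\frac{1}{L_0}+a_kB_{k,T}$ since $L_k=L_0$, and substituting \eqref{label_148} gives $c_k=\frac{1}{L_0}+a_k\cdot\frac{A_k-2a_k}{2L_0a_k^2}=\frac{1}{L_0}\big(1+\frac{A_k-2a_k}{2a_k}\big)=\frac{1}{L_0}\cdot\frac{A_k}{2a_k}$, which is \eqref{label_149}; the boundary value $c_1$ is reconciled with \eqref{label_144} by the same computation. For $\bar{c}_k$ I would use \eqref{label_057}, $\bar{c}_{k+1}=A_kb_k-a_{k+1}B_{k+1,T}$; the first term is $\frac{A_k}{L_0a_{k+1}}$ from the $b_k$ computation above, the second is $a_{k+1}\cdot\frac{A_{k+1}-2a_{k+1}}{2L_0a_{k+1}^2}=\frac{A_{k+1}-2a_{k+1}}{2L_0a_{k+1}}$, and the difference is $\frac{2A_k-A_{k+1}+2a_{k+1}}{2L_0a_{k+1}}=\frac{A_{k+1}}{2L_0a_{k+1}}$, matching $c_{k+1}$ and completing \eqref{label_149}, with the $k=1$ value handled by the extension $\bar{c}_1=\frac{A_1}{2L_0a_1}$ from \eqref{label_144}.

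\textbf{Main obstacle.} I expect the only real friction to be bookkeeping at the two boundaries, $k=T-1$ (where $B_{T-1,T}=b_{T-1}$ rather than $b_{T-1}+B_{T,T}$ unless one adopts the convention $B_{T,T}=0$, which the extensions \eqref{label_143} are precisely designed to make consistent) and $k=0,1$ (where \eqref{label_140} does not apply and one must invoke the \emph{definitions} in \eqref{label_144} instead of deriving them); getting every range endpoint to line up so that the four displayed identities hold on exactly the stated ranges is the part that needs care. The algebra itself — the downward induction for $B_{k,T}$, solving the quadratic for $a_k$, and the one-line substitutions for $b_k$, $c_k$, $\bar{c}_k$ — is routine once the base cases are correctly seeded, and mirrors the proof already given for the smooth case almost verbatim, with $\frac{A_{T-1}}{\cdots}$ patterns replaced by the $\frac{1}{2}$-weighted composite analogues.
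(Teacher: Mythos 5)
Your proposal is correct and follows essentially the same route as the paper: \eqref{label_148} is just \eqref{label_142} rearranged (plus the trivial $k=T$ check via \eqref{label_143}), \eqref{label_147} comes from feeding \eqref{label_148} at $k+1$ into \eqref{label_140} with the boundary cases supplied by \eqref{label_141}, \eqref{label_143} and \eqref{label_144}, \eqref{label_146} is the positive root of the quadratic obtained from the accumulator recursion $B_{k,T}=b_k+B_{k+1,T}$, and \eqref{label_149} is direct substitution. The only cosmetic difference is that your opening downward-induction framing for \eqref{label_148} is superfluous — as you yourself conclude, it is a one-line rearrangement of \eqref{label_142} — and your phrase ``substitute into \eqref{label_142}'' should be read as using the accumulator recursion to express $B_{k,T}$ through index $k+1$, which is exactly the paper's step.
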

\begin{proof}
Rearranging \eqref{label_142} produces \eqref{label_148} for $k \inn{1}{T - 1}$. For $k = T$ we likewise have $B_{T, T} = 0 \overset{\eqref{label_143}}{=} \frac{A_T - 2 a_T}{2 L_0 a_T^2}$.

Our assumption in \eqref{label_144} gives \eqref{label_147} for $k = 0$. Also, \eqref{label_141} and \eqref{label_143} imply \eqref{label_147} for $k = T - 1$. Substituting \eqref{label_148} in \eqref{label_140} implies for all $k \inn{1}{T-2}$ that
\begin{equation}
A_k b_k \overset{\eqref{label_148}}{=} \frac{1}{L_0} + 2 a_{k + 1} \frac{A_{k + 1} - 2 a_{k + 1}}{2 L_0 a_{k + 1}^2} \overset{\eqref{label_027}}{=} \frac{1}{L_0} + \frac{A_{k} - a_{k + 1}}{L_0 a_{k + 1}} = \frac{A_k}{L_0 a_{k + 1}}.
\end{equation}
We have thus proven \eqref{label_147} for the entire range $k \inn{0}{T - 1}$.

Expanding \eqref{label_147} and \eqref{label_148} in the reverse accumulator definition \eqref{label_028} produces an expression only in the weights $a_k$ and their accumulators for all $k \inn{1}{T - 1}$
\begin{equation} \label{label_150}
\frac{A_k - 2 a_k}{2 L_0 a_k^2} \overset{\eqref{label_148}}{=} B_{k, T} \overset{\eqref{label_028}}{=} B_{k + 1, T} - b_k \overset{\eqref{label_147}}{=} \frac{A_{k + 1} - 2 a_{k + 1}}{2 L_0 a_{k + 1}^2} - \frac{1}{L_0 a_{k + 1}}.
\end{equation}
Rearranging \eqref{label_150} gives the following second-order equation in $a_k$:
\begin{equation} \label{label_151}
\frac{A_{k + 1}}{2} a_k^2 + a_{k + 1}^2 a_k - \frac{a_{k + 1}^2 A_k}{2} = 0, \quad k \inn{1}{T - 1}.
\end{equation}
The equation \eqref{label_151} has only one positive solution given by \eqref{label_146}.

We can use \eqref{label_144} to extend the range of \eqref{label_057} to $k = 0$. Refactoring \eqref{label_140} using the extended \eqref{label_057} and \eqref{label_046} gives
\begin{equation}
\begin{gathered}
\bar{c}_{k + 1}\overset{\eqref{label_057}}{=} A_k b_k - a_{k + 1} B_{k + 1, T} \overset{\eqref{label_140}}{=} \frac{1}{L_0} + a_{k + 1} B_{k + 1, T} \overset{\eqref{label_046}}{=} c_{k + 1} \\ \overset{\eqref{label_148}}{=} \frac{1}{L_0} + a_{k + 1} \frac{A_{k + 1} - 2 a_{k + 1}}{2 L_0 a_{k + 1}^2} = \frac{A_k}{2 L_0 a_{k}}, \quad k \inn{0}{T - 2}.
\end{gathered}
\end{equation}
\end{proof}

Proposition~\ref{label_145} shows how the pairs $(a_k, A_k)$, $k \inn{1}{T-1}$ can be computed in reverse, by starting with $(a_T, A_T) = (A_{T - 1}, 2 A_{T - 1})$, recursing backwards using $A_k = A_{k + 1} - a_{k + 1}$ and \eqref{label_146}. We finally obtain $A_0$ from $A_0 = A_1 - a_1$.

We have now collected all the building blocks needed to construct a method. Substituting the weights from Proposition~\ref{label_145} into the template in Algorithm~\ref{label_021} allows us to formulate our Optimized Composite Gradient Method for minimizing the composite Gradient mapping norm (OCGM-G) listed in Algorithm~\ref{label_152}.

\begin{algorithm}[h]
\caption{An Optimized Composite Gradient Method for minimizing the composite Gradient mapping norm (OCGM-G)}
\label{label_152}
\begin{algorithmic}[1]
\STATE \textbf{Input:} $x_0 \in \mathbb{E}$, $L_0 > 0$, $T \geq 2$, optionally $A_{T - 1} > 0$ or default $A_{T - 1} = 1$
\STATE \textbf{Output:} $y_T$, $g_T$, $x_T$
\STATE $A_{T} = 2 A_{T - 1}$ \label{label_152_a_start}
\STATE $a_T = A_{T - 1}$
\STATE Precompute the pairs $(a_k, A_k)$, $k \inn{1}{T-1}$ and $A_0$ in reverse using \newline{} $A_k = A_{k + 1} - a_{k + 1}$, $a_k = \frac{a_{k + 1}}{A_{k + 1}} \left( \sqrt{a_{k + 1}^2 + A_k A_{k + 1}} - a_{k + 1} \right)$, $A_0 = A_1 - a_1$ \label{label_152_a_end}
\STATE $s_0 = 0$ \label{label_152_s0}
\FOR{$k = 0, \ldots{}, T - 1$}
\STATE $y_{k + 1} = \left\{\begin{array}{ll}x_0, & k = 0 \\ x_k - \frac{1}{L_0 a_{k + 1}} B^{-1} s_k, & k \geq 1 \end{array}\right.$ \label{label_152_y}
\STATE $x_{k + 1} = T_{L_0}(y_{k + 1})$
\STATE $g_{k + 1} = L_{0} B (y_{k + 1} - x_{k + 1})$ \label{label_152_L_new}
\IF {$f({x}_{k + 1}) > f(y_{k + 1}) + \langle f'(y_{k + 1}), {x}_{k + 1} - y_{k + 1} \rangle + \frac{L_{0}}{2} \| {x}_{k + 1} - y_{k + 1} \|^2$} \label{label_152_Ls_start}
\STATE $y_T = y_{k + 1}$, $g_T = g_{k + 1}$, $x_T = x_{k + 1}$
\STATE break and report line-search failure
\ENDIF \label{label_160_short}

\IF {$k = T - 1$}
\STATE break and report line-search success
\ENDIF \label{label_160}
\STATE $s_{k + 1} = s_k + a_{k + 1} g_{k + 1}$ \label{label_161}
\ENDFOR
\end{algorithmic}
\end{algorithm}

\subsection{Equivalent forms}
The closed form of the step sizes $b_k$ in Proposition~\ref{label_145} allows us to write Algorithm~\ref{label_152} in a very simple extrapolated form. For $y_2$ we have
\begin{equation} \label{label_162}
y_2 = x_1 - b_1 a_1 g_1 \overset{\eqref{label_147}}{=} x_1 - \frac{1}{L_0 a_2} a_1 L_0(x_0 - x_1) = x_1 + \frac{a_1}{a_2} (x_1 - x_0).
\end{equation}
The extrapolation and correction terms for subsequent oracle points are given for all $k \inn{2}{T - 1}$ by
\begin{equation} \label{label_163}
\mathcal{E}_k \overset{\eqref{label_033}}{=} \frac{b_k}{b_{k - 1}} \overset{\eqref{label_147}}{=} \frac{a_k}{a_{k + 1}}, \quad \mathcal{C}_k \overset{\eqref{label_033}}{=} b_k \left(L_0 a_k - \frac{1}{b_{k - 1}}\right) \overset{\eqref{label_147}}{=} 0.
\end{equation}
The rule in \eqref{label_162} is merely an extension of \eqref{label_163} for $k = 1$ so we can write the extrapolated form of OCGM-G very simply as shown in Algorithm~\ref{label_164}. All weights $a_k$, $k \inn{1}{T}$, are proportional to $A_{T - 1}$ and we can set this latter value to $1$.

\begin{algorithm}[h]
\caption{Extrapolation based OCGM-G}
\label{label_164}
\begin{algorithmic}[1]
\STATE \textbf{Input:} $x_0 \in \mathbb{E}$, $T \geq 2$
\STATE \textbf{Output:} $y_T$, $g_T$, $x_T$
\STATE Precompute $(a_k, A_k)$, $k \inn{1}{T}$, and $A_0$ using lines~\ref{label_152_a_start}-\ref{label_152_a_end} of Algorithm~\ref{label_152}
\FOR{$k = 0, \ldots{}, T - 1$}
\STATE $y_{k + 1} = \left\{\begin{array}{ll}x_0, &k = 0 \\
x_k + \frac{a_k}{a_{k + 1}}(x_k - x_{k - 1}), & k \geq 1 \end{array}\right.$
\STATE $x_{k + 1} = T_{L_{0}}(y_{k + 1})$
\STATE $g_{k + 1} = L_{0} B (y_{k + 1} - x_{k + 1})$
\STATE Check the termination criteria listed in lines~\ref{label_152_Ls_start}-\ref{label_160} of Algorithm~\ref{label_152}
\ENDFOR
\end{algorithmic}
\end{algorithm}

To write down an auxiliary sequence form of OCGM-G, we first notice that the second order equation in \eqref{label_151} can be refactored as
\begin{equation} \label{label_165}
A_{k + 1} a_k^2 = a_{k + 1}^2 (A_k - 2 a_k), \quad k \inn{1}{T - 1}.
\end{equation}
The multiplier in the oracle point update rule is then given for all $k \inn{1}{T - 2}$ by
\begin{equation} \label{label_166}
d_k \overset{\eqref{label_046}}{=} \frac{b_k}{B_{k, T}} \overset{\eqref{label_147}} = \frac{\frac{1}{L_0 a_{k + 1}}}{\frac{A_k - 2 a_k}{2 L_0 a_k^2}} = \frac{2 a_k^2}{a_{k + 1} (A_k - 2 a_k)} \overset{\eqref{label_165}}{=} \frac{2 a_{k + 1}}{A_{k + 1}}.
\end{equation}
Putting together \eqref{label_166} and \eqref{label_149}, extending \eqref{label_166} to $k = 0$, gives the concise formulation listed in Algorithm~\ref{label_167}.

\begin{algorithm}[h]
\caption{Auxiliary sequence form of OCGM-G}
\label{label_167}
\begin{algorithmic}[1]
\STATE \textbf{Input:} $x_0 \in \mathbb{E}$, $T \geq 2$
\STATE \textbf{Output:} $y_T = v_{T - 1}$
\STATE Precompute $(a_k, A_k)$, $k \inn{1}{T}$, and $A_0$ using lines~\ref{label_152_a_start}-\ref{label_152_a_end} of Algorithm~\ref{label_152}
\STATE $v_0 = x_0$
\FOR{$k = 0, \ldots{}, T - 2$}
\STATE $y_{k + 1} = \left(1 - \frac{2 a_{k + 1}}{A_{k + 1}} \right) x_k + \frac{2 a_{k + 1}}{A_{k + 1}} v_k$
\STATE $x_{k + 1} = T_{L_{k + 1}}(y_{k + 1})$
\STATE $g_{k + 1} = L_{k + 1} B (y_{k + 1} - x_{k + 1})$
\STATE Check the termination criteria listed in lines~\ref{label_152_Ls_start}-\ref{label_160_short} of Algorithm~\ref{label_152}
\STATE $v_{k + 1} = v_k - \frac{A_{k + 1}}{2 L_0 a_{k + 1}} B^{-1} g_{k + 1}$
\ENDFOR
\end{algorithmic}
\end{algorithm}

Likewise, substituting \eqref{label_149} and \eqref{label_147} into Algorithm~\ref{label_021_diak} gives a two auxiliary sequence formulation shown in Algorithm~\ref{label_168}.

\begin{algorithm}[h]
\caption{Two auxiliary sequence form of OCGM-G}
\label{label_168}
\begin{algorithmic}[1]
\STATE \textbf{Input:} $x_0 \in \mathbb{E}$, $T \geq 2$
\STATE \textbf{Output:} $y_T = v_{T - 1}$
\STATE Precompute $(a_k, A_k)$, $k \inn{1}{T}$, and $A_0$ using lines~\ref{label_152_a_start}-\ref{label_152_a_end} of Algorithm~\ref{label_152}
\STATE $v_0 = x_0$
\STATE $s_0 = 0$
\FOR{$k = 0, \ldots{}, T - 2$}
\STATE $y_{k + 1} = \frac{1}{A_{k + 1}} \left( A_k x_k + a_{k + 1} v_k \right) - \frac{1}{2 L_0 a_{k + 1}} s_k$
\STATE $x_{k + 1} = T_{L_{k + 1}}(y_{k + 1})$
\STATE $g_{k + 1} = L_{k + 1} B (y_{k + 1} - x_{k + 1})$
\STATE Check the termination criteria listed in lines~\ref{label_152_Ls_start}-\ref{label_160_short} of Algorithm~\ref{label_152}
\STATE $s_{k + 1} = s_k + a_{k + 1} g_{k + 1}$
\STATE $v_{k + 1} = v_k - \frac{A_{k + 1}}{2 L_0 a_{k + 1}} B^{-1} g_{k + 1}$
\ENDFOR
\end{algorithmic}
\end{algorithm}

\subsection{Relationship to FISTA-G} We have seen in the previous section that the extrapolation property of the auxiliary points is a \emph{consequence} of our template. Interestingly, allowing for adaptivity under the constraints in Proposition~\ref{label_135} preserves this feature. On the other hand, FISTA-G was designed using extrapolation as an \emph{assumption} raising the question as the whether the two approaches produce similar methods. We provide the answer in the following result.

\begin{proposition} \label{prop:ocgm_fista-g}
Supplying $L_0 = L$ to Algorithm~\ref{label_152} and removing the line-search condition checks reduces OCGM-G to the formulation of FISTA-G in \cite{ref_014}.
\end{proposition}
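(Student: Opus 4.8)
The plan is to verify that, with $L_k = L$ for all $k$, the update rules of OCGM-G (Algorithm~\ref{label_152}, or equivalently its auxiliary-sequence form Algorithm~\ref{label_167}) coincide with those stated for FISTA-G in \cite{ref_014}. Since both methods are already written in comparable auxiliary-sequence form, the cleanest route is to start from Algorithm~\ref{label_167}, set $L_0 = L$, and match three things: (i) the oracle-point blending coefficient $d_k = \frac{2 a_{k+1}}{A_{k+1}}$; (ii) the $v$-sequence step size $c_{k+1} = \frac{A_{k+1}}{2 L a_{k+1}}$; and (iii) the recursion \eqref{label_146} governing $(a_k, A_k)$. The first step is therefore to recall the FISTA-G update rules from \cite{ref_014} in the paper's own notation, so that the correspondence is a term-by-term identification rather than an appeal to an external convention.

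First I would translate the FISTA-G parameter sequence. FISTA-G in \cite{ref_014} is driven by a scalar sequence — call it $(\beta_k)$ or $(\gamma_k)$ in their notation — defined by a backward recursion analogous to \eqref{label_093} but with the composite-specific coefficients; one then shows that the substitution $a_k \leftrightarrow$ (their sequence) together with $A_k = A_{k+1} - a_{k+1}$ and the boundary data \eqref{label_143} turns \eqref{label_146} into their recursion, using the refactored quadratic \eqref{label_165} $A_{k+1} a_k^2 = a_{k+1}^2 (A_k - 2 a_k)$. Second, I would plug $L_0 = L$ into \eqref{label_166} and \eqref{label_149} to get the explicit blending coefficient and step size, and check these against the FISTA-G updates; the identity \eqref{label_165} is again the workhorse that converts the ``ratio of $a$'s'' form into the ``ratio involving $A$'s'' form, or vice versa, whichever matches \cite{ref_014}. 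Third, since with $L_0 = L$ the descent condition \eqref{label_008} is automatically satisfied (by $L$-smoothness of $f$), the line-search branch in lines~\ref{label_152_Ls_start}--\ref{label_160_short} of Algorithm~\ref{label_152} never triggers and can be removed without changing the iterates, so the stripped-down algorithm is exactly a fixed-step composite method with the proximal step $T_L$; this matches the structure of FISTA-G, whose gradient step is likewise $T_L$ with $L$ known. Finally, the output identity $y_T = v_{T-1}$ from \eqref{label_047} shows the reported solution point agrees.

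I expect the main obstacle to be purely notational bookkeeping: \cite{ref_014} parametrizes FISTA-G through a sequence satisfying a recursion that looks superficially different from \eqref{label_146}, and the proof must exhibit the explicit change of variables (most plausibly something like their sequence being $A_k/a_k$, or $2a_k/A_k$, given the shapes appearing in \eqref{label_166} and \eqref{label_149}) under which the two recursions, the two blending coefficients, and the two step sizes simultaneously align, including the boundary conditions at $k = T-1$ and the special first iterate $y_2 = x_1 + \frac{a_1}{a_2}(x_1 - x_0)$ from \eqref{label_162}. Once the dictionary between the two sets of scalar sequences is pinned down, every remaining verification is a one-line algebraic substitution using \eqref{label_027}, \eqref{label_146}, \eqref{label_147}, \eqref{label_149} and \eqref{label_165}, with no genuine mathematical content — the content is entirely in choosing the right correspondence and confirming it respects the recursion's initial data.
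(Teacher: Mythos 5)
Your outline correctly identifies the shape of the task (exhibit a dictionary between scalar sequences, verify the recursions and update coefficients align, note that the line-search never fires when $L_0 = L$), and the individual facts you cite — $d_k = \frac{2a_{k+1}}{A_{k+1}}$ from \eqref{label_166}, $c_{k+1} = \frac{A_{k+1}}{2L a_{k+1}}$ from \eqref{label_149}, $y_T = v_{T-1}$ from \eqref{label_047}, and the quadratic \eqref{label_165} — are all accurate. But the proposal stops exactly where the proof begins: the entire content of the proposition is the explicit correspondence with the FISTA-G parametrization, and you leave that as "most plausibly something like their sequence being $A_k/a_k$, or $2a_k/A_k$." That guess is wrong. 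The paper shows that FISTA-G's driving sequence $\varphi$ is the \emph{reverse accumulator of the step sizes}, $(\varphi_{\on{FISTA-G}})_{k+1} = B_{k,T}$, and the comparison is made against the \emph{extrapolated} form of FISTA-G (the form in which \cite{ref_014} states it), not an auxiliary-sequence form. (The sequence $t_k = A_k/a_k$ does correspond to a FISTA-G quantity, namely $(c_{\on{FISTA-G}})_k = t_{T-k-1}-1$ as noted in Section~\ref{label_179}, but that is not the sequence that parametrizes the FISTA-G updates.)

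The missing mathematical work is the elimination argument. Starting from \eqref{label_140}--\eqref{label_142}, one substitutes out $A_k$ using \eqref{label_142} and then $a_k = \frac{1}{L b_{k-1}}$ from \eqref{label_147} to obtain a relation purely in the $b_k$ and $B_{k,T}$ (eq.~\eqref{label_170}); replacing $b_k = B_{k,T} - B_{k+1,T}$ then yields a closed quadratic backward recursion \eqref{label_172} in the reverse accumulators alone, whose unique root exceeding $B_{k,T}$ is exactly the FISTA-G update \eqref{label_177}. The extrapolation coefficient is then read off as $\frac{B_{k,T}-B_{k+1,T}}{B_{k-1,T}-B_{k,T}}$ in \eqref{label_178}, matching \cite{ref_014} term by term. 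Without this elimination and the solution of the quadratic (including checking which root is admissible), the claimed equivalence is not established; your plan would have you verify a correspondence you have not identified, against a form of FISTA-G that is not the one in the cited reference.
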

\begin{proof}
We note that \eqref{label_141} extends the range of $k$ in \eqref{label_140} to $k \inn{1}{T - 1}$.
Substituting $A_k$ out of the extended \eqref{label_140} using \eqref{label_142} yields
\begin{equation} \label{label_169}
2 a_k (1 + L a_k B_{k, T}) b_k = \frac{1}{L} + 2 a_{k + 1} B_{k + 1, T}, \quad k \inn{1}{T - 1}.
\end{equation}
Refactoring \eqref{label_147} gives $a_k = \frac{1}{L b_{k - 1}}$ for all $k \inn{1}{T}$ that applied to \eqref{label_169} produces
\begin{equation} \label{label_170}
\frac{2}{b_{k - 1}} \left(1 + \frac{B_{k, T}}{b_{k - 1}} \right) b_k = 1 + 2 \frac{B_{k + 1, T}}{b_k}, \quad k \inn{1}{T - 1}.
\end{equation}
We retain the range $k \inn{1}{T - 1}$ throughout the remainder of the proof. Multiplying both sides of \eqref{label_170} by $b_{k - 1}^2 b_k$ gives
\begin{equation} \label{label_170_norm}
2 b_k^2 (b_{k - 1} + B_{k, T}) = b_{k - 1}^2 (b_k + 2 B_{k + 1, T}).
\end{equation}
Further substituting out the step sizes $b_k$ in \eqref{label_170_norm} using \eqref{label_028} results in a quadratic recursion for their accumulators only, namely
\begin{equation} \label{label_172}
2 (B_{k, T} - B_{k + 1, T})^2 B_{k - 1} = (B_{k - 1, T} - B_{k, T})^2 (B_{k, T} + B_{k + 1, T}).
\end{equation}
The equation \eqref{label_172} with unknown $B_{k - 1, T}$ can be written in canonical form as
\begin{equation} \label{label_173}
(B_{k, T} + B_{k + 1, T}) B_{k - 1, T}^2 - 2 \mathcal{Q}^{(1)}_{k + 1, T} B_{k - 1, T} + B_{k, T}^2 (B_{k, T} + B_{k + 1, T}) = 0,
\end{equation}
where
\begin{equation} \label{label_174}
\begin{aligned}
\mathcal{Q}^{(1)}_{k + 1, T} &\defq B_{k, T} (B_{k, T} + B_{k + 1, T}) + (B_{k, T} - B_{k + 1, T})^2 \\
&= 2 B_{k, T}^2 - B_{k, T} B_{k + 1, T} + B_{k + 1, T}^2.
\end{aligned}
\end{equation}
The quadratic equation in \eqref{label_173} admits only one solution strictly greater than $B_{k, T}$, given by
\begin{equation} \label{label_175}
B_{k - 1, T} = \frac{1}{B_{k, T} + B_{k + 1, T}} \left(\mathcal{Q}^{(1)}_{k + 1, T} + \sqrt{ \mathcal{Q}^{(2)}_{k + 1, T} } \right),
\end{equation}
where
\begin{equation} \label{label_176}
\begin{gathered}
\mathcal{Q}^{(2)}_{k + 1, T} \defq \left(\mathcal{Q}^{(1)}_{k + 1, T}\right)^2 - B_{k, T}^2 (B_{k, T} + B_{k + 1, T})^2 = (B_{k, T} - B_{k + 1, T})^4 \\
+ 2 (B_{k, T} - B_{k + 1, T})^2 B_{k, T} (B_{k, T} + B_{k + 1, T}) = (B_{k, T} - B_{k + 1, T})^2 \left(3 B_{k, T}^2 + B_{k + 1, T}^2 \right).
\end{gathered}
\end{equation}
Expanding \eqref{label_174} and \eqref{label_176} into \eqref{label_175} yields the update found in FISTA-G using the notation $(\varphi_{\on{FISTA-G}})_{k + 1} = B_{k, T}$, namely
\begin{equation} \label{label_177}
B_{k - 1, T} = \frac{2 B_{k, T}^2 - B_{k, T} B_{k + 1, T} + B_{k + 1, T}^2 + (B_{k, T} - B_{k + 1, T}) \sqrt{ 3 B_{k, T}^2 + B_{k + 1, T}^2 } }{B_{k, T} + B_{k + 1, T}}.
\end{equation}
Combining \eqref{label_162} and \eqref{label_147} allows us to extend the extrapolation factor in \eqref{label_033} to $k = 1$. Rewriting this extended expression in terms of the reverse accumulators using \eqref{label_028}, also extended to $k = 0$, gives the extrapolation rule
\begin{equation} \label{label_178}
y_{k + 1} = x_k + \frac{B_{k, T} - B_{k + 1,T}}{B_{k - 1, T} - B_{k,T}} (x_k - x_{k - 1}).
\end{equation}
Taking the updates in \eqref{label_178} and \eqref{label_177} with the notation $(K_{\on{FISTA-G}}) = T - 1$, $(x_{\on{FISTA-G}})_k = y_k$, $(x^{\oplus}_{\on{FISTA-G}}) = x_k$ and $(\varphi_{\on{FISTA-G}})_{k + 1} = B_{k, T}$ we recover FISTA-G in extrapolated form as described in \cite{ref_014}.
\end{proof}

\subsection{Convergence analysis} \label{label_179}

We have seen in Proposition~\ref{label_135} and in \eqref{label_139} that the convergence rate on the composite gradient norm is governed by the sequence $(A_k)_{k \inn{0}{T}}$. Its behavior can be more easily analyzed using the sequence $(t_k)_{k \inn{1}{T}}$, which we define as
\begin{equation} \label{label_180}
t_k \defq \frac{A_k}{a_k}, \quad k \inn{1}{T}.
\end{equation}
A related sequence, under the notation $(c_{\on{FISTA-G}})_k = t_{T - k - 1} - 1$, $k \inn{0}{T - 2}$, has been independently studied in \cite{ref_014}. However, as we will see in Section~\ref{label_223}, our notation establishes a clear link with the $\theta_{k, T}$ sequence in Section~\ref{label_116} as well as the $t_k$ sequence used in the FISTA~\cite{ref_002} variant of FGM, also satisfying \eqref{label_180}. We first study how $A_k/A_0$ can be lower bounded using this new object.
\begin{lemma} \label{label_181}
The sequence $(t_k)_{k \inn{1}{T}}$ satisfies
\begin{align}
t_k (t_k - 2) &= t_{k + 1} (t_{k + 1} - 1), & k &\inn{1}{T - 1} \label{label_182} \\
t_k &\geq t_{k + l} + \frac{l}{2}, &k &\inn{1}{T - 1}, \quad l \inn{1}{T - k} \label{label_183} \\
\frac{t_{k + 1}(t_{k + 1} - 1)}{t_{k}(t_{k} - 1)} &\geq \frac{t_{k + 2} - 1}{t_{k + 2}}, & k & \inn{1}{T - 2}, \label{label_184} \\
\frac{A_k}{A_0} &\geq \frac{t_1^2}{t_{k - 1}(t_{k - 1} - 1)}, & k & \inn{3}{T}. \label{label_185}
\end{align}
\end{lemma}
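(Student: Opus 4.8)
The plan is to establish the four claims essentially in the order they are stated, since each relies on the previous. First I would derive the quadratic recursion \eqref{label_182}. The natural starting point is the defining relation \eqref{label_142}, namely $a_k(1 + L_0 a_k B_{k,T}) = A_k/2$. Dividing through by $a_k^2$ and using $t_k = A_k/a_k$ gives $1 + L_0 a_k B_{k,T} = t_k/2$, so $L_0 a_k B_{k,T} = t_k/2 - 1$. Similarly, from \eqref{label_148} we have $L_0 a_k^2 B_{k,T} = (A_k - 2a_k)/2 = a_k(t_k - 2)/2$, so again $L_0 a_k B_{k,T} = (t_k-2)/2$, consistent. The key is to relate consecutive indices: using $B_{k,T} = B_{k+1,T} + b_k$ from \eqref{label_028} together with $b_k = 1/(L_0 a_{k+1})$ from \eqref{label_147}, I get $L_0 a_k B_{k,T} = L_0 a_k B_{k+1,T} + a_k/a_{k+1}$. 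Multiplying the relation $L_0 a_{k+1} B_{k+1,T} = (t_{k+1}-2)/2$ by $a_k/a_{k+1}$ yields $L_0 a_k B_{k+1,T} = \frac{a_k}{a_{k+1}}\cdot\frac{t_{k+1}-2}{2}$. Combining, $\frac{t_k - 2}{2} = \frac{a_k}{a_{k+1}}\left(\frac{t_{k+1}-2}{2} + 1\right) = \frac{a_k}{a_{k+1}}\cdot\frac{t_{k+1}}{2}$, i.e. $a_{k+1}(t_k-2) = a_k t_{k+1}$. Since $a_k t_{k+1} = a_k A_{k+1}/a_{k+1}$ and $A_{k+1} = A_k + a_{k+1}$, a short manipulation (multiply both sides by $t_k/a_k$ or equivalently track $A$'s) converts this into $t_k(t_k-2) = t_{k+1}(t_{k+1}-1)$; alternatively one can verify it directly from \eqref{label_165}, $A_{k+1}a_k^2 = a_{k+1}^2(A_k - 2a_k)$, by dividing by $a_k a_{k+1}$.

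With \eqref{label_182} in hand, the monotone spacing bound \eqref{label_183} follows by induction on $l$. It suffices to prove the case $l=1$: $t_k \geq t_{k+1} + 1/2$. Writing $u = t_{k+1}$, \eqref{label_182} says $t_k^2 - 2t_k - (u^2 - u) = 0$, so $t_k = 1 + \sqrt{1 + u^2 - u} = 1 + \sqrt{(u - 1/2)^2 + 3/4}$. Using $t_{k+1} = u \geq 1$ (which holds since $t_T = A_T/a_T = 2$ by \eqref{label_143} and the sequence increases as $k$ decreases — this should be checked, or one notes $t_k \geq 2$ for all $k$ from the recursion and the boundary value), we get $t_k > 1 + (u - 1/2) = u + 1/2$, as desired. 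Iterating gives \eqref{label_183}. I would be careful to first establish the boundary value $t_T = 2$ and the lower bound $t_k \geq 2$ needed to make the square-root estimate valid.

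For \eqref{label_184}, the inequality $\frac{t_{k+1}(t_{k+1}-1)}{t_k(t_k-1)} \geq \frac{t_{k+2}-1}{t_{k+2}}$, I would rewrite the numerator on the left using \eqref{label_182} at index $k+1$: $t_{k+1}(t_{k+1}-1) = t_{k+2}(t_{k+2}-1) + t_{k+1}$... actually more directly, \eqref{label_182} at index $k$ gives $t_k(t_k-1) = t_k(t_k-2) + t_k = t_{k+1}(t_{k+1}-1) + t_k$. So the left-hand side equals $\frac{t_{k+1}(t_{k+1}-1)}{t_{k+1}(t_{k+1}-1) + t_k}$. The claim becomes $t_{k+2}\, t_{k+1}(t_{k+1}-1) \geq (t_{k+2}-1)\bigl(t_{k+1}(t_{k+1}-1) + t_k\bigr)$, i.e. $t_{k+1}(t_{k+1}-1) \geq (t_{k+2}-1) t_k$. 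Using \eqref{label_182} at $k+1$ once more, $t_{k+1}(t_{k+1}-1) = t_{k+2}(t_{k+2}-1) \geq (t_{k+2}-1) t_{k+1}$ provided $t_{k+2} \geq t_{k+1}$ — but in fact the sequence decreases in $k$, so $t_{k+1} \geq t_{k+2}$ goes the wrong way. I would instead argue $t_{k+1}(t_{k+1}-1) \geq (t_{k+2}-1)t_k$ directly: since $t_k \geq t_{k+1} + 1/2$ and $t_{k+1} \geq t_{k+2} + 1/2$ from \eqref{label_183}, one checks $t_{k+1}(t_{k+1}-1) - (t_{k+2}-1)t_k \geq t_{k+1}(t_{k+1}-1) - (t_{k+1} - 3/2)(t_{k+1} + 1/2)$, and expanding the right side gives a positive constant plus a positive linear term in $t_{k+1}$; this is the routine verification I would relegate.

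Finally, for \eqref{label_185}, I would telescope. From \eqref{label_182} we have $A_{k+1}/A_k = 1 + a_{k+1}/A_k$; expressing $a_{k+1}/A_k$ via $t$'s: $a_{k+1} = A_{k+1}/t_{k+1}$ and $A_k = A_{k+1} - a_{k+1} = A_{k+1}(1 - 1/t_{k+1}) = A_{k+1}(t_{k+1}-1)/t_{k+1}$, so $A_{k+1}/A_k = t_{k+1}/(t_{k+1}-1)$. Therefore $A_k/A_0 = \prod_{j=1}^{k} \frac{A_j}{A_{j-1}} = \prod_{j=1}^{k} \frac{t_j}{t_j - 1}$. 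Now I would use \eqref{label_184} to show this product telescopes favorably: $\frac{t_j}{t_j-1}\cdot\frac{t_{j+1}}{t_{j+1}-1} = \frac{t_j}{t_j-1}\cdot\frac{1}{\,(t_{j+1}-1)/t_{j+1}\,} \geq \frac{t_j}{t_j-1}\cdot\frac{t_j(t_j-1)}{t_{j-1}(t_{j-1}-1)}$ by \eqref{label_184} applied with the index shifted appropriately, and pairs of factors collapse, leaving $\frac{t_1}{t_1-1}\cdot\frac{t_1(t_1-1)}{t_{k-1}(t_{k-1}-1)} = \frac{t_1^2}{t_{k-1}(t_{k-1}-1)}$ for $k \geq 3$. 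The exact bookkeeping of which factors survive the telescoping is where care is needed, but the skeleton is: write $A_k/A_0$ as a product of $t_j/(t_j-1)$, invoke \eqref{label_184} to lower-bound consecutive pairs, and collapse. The main obstacle across the whole lemma is not any single identity — each is a couple of lines — but rather keeping the index ranges and the direction of monotonicity straight, especially since the sequence $(t_k)$ is defined by a backward recursion from $t_T = 2$ and is decreasing in $k$, opposite to the FISTA convention; establishing $t_k \geq 2$ and the boundary values cleanly at the outset is what makes the rest routine.
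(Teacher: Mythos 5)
Your treatment of \eqref{label_182}, \eqref{label_183} and \eqref{label_185} matches the paper's proof in substance: the quadratic recursion comes from \eqref{label_165} (equivalently from \eqref{label_142}, \eqref{label_147} and \eqref{label_148}) combined with the identity $a_k/a_{k+1} = (t_{k+1}-1)/t_k$; the spacing bound \eqref{label_183} comes from solving the quadratic for its positive root and estimating $1+\sqrt{(t_{k+1}-\tfrac12)^2+\tfrac34}\geq t_{k+1}+\tfrac12$; and the telescoping of $A_k/A_0=\prod_j t_j/(t_j-1)$ via \eqref{label_184} is exactly the paper's computation.

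The gap is in \eqref{label_184}. Your reduction of the claim to $t_{k+1}(t_{k+1}-1)\geq(t_{k+2}-1)\,t_k$ is correct, but the ``routine verification'' you relegate does not go through: to upper-bound $(t_{k+2}-1)\,t_k$ by $(t_{k+1}-\tfrac32)(t_{k+1}+\tfrac12)$ you need $t_k\leq t_{k+1}+\tfrac12$, which is the \emph{reverse} of your own \eqref{label_183}. The valid upper bound is $t_k\leq t_{k+1}+1$ (from $t_k=1+\sqrt{t_{k+1}^2-t_{k+1}+1}\leq 1+t_{k+1}$), and with it the difference evaluates to $\tfrac32-\tfrac12 t_{k+1}$, which is negative once $t_{k+1}>3$, so this bounding strategy is too lossy to close. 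The fix is short and uses only what you already have: by \eqref{label_182} at index $k$, $t_{k+1}(t_{k+1}-1)=t_k(t_k-2)$, and by \eqref{label_183} with $l=2$, $t_k-2\geq t_{k+2}-1$; hence $t_k(t_k-2)\geq (t_{k+2}-1)\,t_k$, which is precisely your reduced inequality. (The paper's own proof is the same observation in another guise: $\frac{t_{k+2}-1}{t_{k+2}}=1-\frac{1}{t_{k+2}}\leq 1-\frac{1}{t_k-1}=\frac{t_k-2}{t_k-1}=\frac{t_{k+1}(t_{k+1}-1)}{t_k(t_k-1)}$, using $t_{k+2}\leq t_k-1$.) With that repair, the remainder of your argument is sound.
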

\begin{proof}
The forward accumulator expression \eqref{label_027} extends via \eqref{label_143} to $k = T - 1$ and we have
\begin{equation} \label{label_186}
\frac{t_{k + 1} - 1}{t_k} \overset{\eqref{label_180}}{=} \frac{\frac{A_{k + 1}}{a_{k + 1}} - 1}{\frac{A_k}{a_k}} \overset{\eqref{label_027}}{=} \frac{\frac{A_{k}}{a_{k + 1}}}{\frac{A_k}{a_k}} = \frac{a_k}{a_{k + 1}}, \quad k \inn{1}{T - 1}.
\end{equation}
Dividing both sides of \eqref{label_165} by $a_k a_{k + 1}^2$ we have
\begin{equation} \label{label_187}
\frac{A_{k + 1}}{a_{k + 1}} \frac{a_k}{a_{k + 1}} = \frac{A_k}{a_k} - 2, \quad k \inn{1}{T - 1}.
\end{equation}
Substituting the definition \eqref{label_180} as well as \eqref{label_186} into \eqref{label_187} yields
\begin{equation} \label{label_182_raw}
t_{k + 1} \frac{t_{k + 1} - 1}{t_k} = t_k - 2, \quad k \inn{1}{T - 1}.
\end{equation}
Multiplying both sides of \eqref{label_182_raw} with $t_k$ gives \eqref{label_182}.

Let us consider \eqref{label_182} to be a quadratic equation with unknown $t_k$. From the definition in \eqref{label_180} it follows that $t_k > 1$, $k \inn{1}{T}$. Hence, the only viable solution of \eqref{label_182} is given for all $k \inn{1}{T - 1}$ by
\begin{equation} \label{label_189}
t_k = 1 + \sqrt{t_{k + 1}^2 - t_{k + 1} + 1} = 1 + \sqrt{\left(t_{k + 1} - \frac{1}{2}\right)^2 + \frac{3}{4}} \geq t_{k + 1} + \frac{1}{2}.
\end{equation}
Iterating \eqref{label_189} we obtain \eqref{label_183}.

Consequently, we have for all $k \inn{1}{T - 2}$ that
\begin{equation}
\frac{t_{k + 2} - 1}{t_{k + 2}} = 1 - \frac{1}{t_{k + 2}} \overset{\eqref{label_183}}{\leq} 1 - \frac{1}{t_k - 1} = \frac{t_k - 2}{t_k - 1} \overset{\eqref{label_182}}{=} \frac{t_{k + 1} (t_{k + 1} - 1)}{t_k (t_k - 1)}.
\end{equation}
The convergence guarantee ratio satisfies for all $k \inn{3}{T}$
\begin{equation}
\begin{gathered}
\frac{A_k}{A_0} = \prod_{i = 1}^{k} \frac{A_i}{A_{i - 1}} = \prod_{i = 1}^{k} \frac{t_i}{t_i - 1}
= \frac{t_1}{t_1 - 1} \frac{t_2}{t_2 - 1} \prod_{i = 1}^{k - 2} \frac{t_{i + 2}}{t_{i + 2} - 1}
\overset{\eqref{label_184}}{\geq} \frac{t_1}{t_1 - 1} \frac{t_2}{t_2 - 1} \\ \prod_{i = 1}^{k - 2} \frac{t_i (t_i - 1)}{t_{i + 1} (t_{i + 1} - 1)}
= \frac{t_1}{t_1 - 1} \frac{t_2}{t_2 - 1} \frac{t_1 (t_1 - 1)}{t_{k - 1}(t_{k - 1} - 1)} \geq \frac{t_1^2}{t_{k - 1}(t_{k - 1} - 1)}.
\end{gathered}
\end{equation}
\end{proof}

\subsubsection{Last iterate guarantee}
Lemma~\ref{label_181} can be used to provide an accurate bound for the gradient mapping norm of the last iterate.
\begin{proposition} \label{label_190}
The composite gradient norm of the last iterate has a worst-case rate of
\begin{equation}
\| g_T \|_*^2 \leq \frac{\mathcal{G}_l L_0}{(T + \mathcal{T}_l)^2} ( F(x_0) - F(x_T) ), \quad l \inn{1}{T - 2},
\end{equation}
where the constants $\mathcal{G}_l$ and $\mathcal{T}_l$ do not depend on $T$ and are, respectively, given by
\begin{equation}
\mathcal{G}_l \defq 8 t_{T - l} (t_{T - l} - 1) \frac{A_{T - l + 1}}{A_{T - 1}}, \quad \mathcal{T}_l \defq 2 t_{T - l} - (l + 1).
\end{equation}
\end{proposition}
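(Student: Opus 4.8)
The plan is to obtain the rate by feeding the accumulator estimate \eqref{label_185} of Lemma~\ref{label_181} into the last-iterate bound \eqref{label_139}. Since OCGM-G (Algorithm~\ref{label_152}) is run with $L_k = L_0$ for all $k \inn{1}{T}$, inequality \eqref{label_139} specializes to $\| g_T \|_*^2 \leq \frac{2 A_0 L_0}{A_{T - 1}} ( F(x_0) - F(x_T) )$, so the whole task reduces to an upper bound on the ratio $A_0/A_{T-1}$ expressed through $T$, $t_{T-l}$ and $A_{T-l+1}/A_{T-1}$.

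First I would split $\frac{A_0}{A_{T-1}} = \frac{A_0}{A_{T-l+1}}\cdot\frac{A_{T-l+1}}{A_{T-1}}$ and apply \eqref{label_185} with $k = T - l + 1$, which is admissible because $l \inn{1}{T-2}$ forces $T - l + 1 \inn{3}{T}$; this gives $\frac{A_0}{A_{T-l+1}} \leq \frac{t_{T-l}(t_{T-l}-1)}{t_1^2}$, whence
\[
\| g_T \|_*^2 \leq \frac{2 L_0\, t_{T-l}(t_{T-l}-1)}{t_1^2}\,\frac{A_{T-l+1}}{A_{T-1}}\,( F(x_0) - F(x_T) ) = \frac{\mathcal{G}_l L_0}{4 t_1^2}\,( F(x_0) - F(x_T) ).
\]
It then suffices to verify $4 t_1^2 \geq (T + \mathcal{T}_l)^2$. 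For this I would invoke \eqref{label_183} with $k = 1$ and with the step there taken to be $T - l - 1 \inn{1}{T-2}$, giving $t_1 \geq t_{T-l} + \frac{T-l-1}{2}$ and hence $2 t_1 \geq 2 t_{T-l} + T - l - 1 = T + \mathcal{T}_l$. Squaring both sides is legitimate because $T + \mathcal{T}_l > T - l + 1 \geq 3 > 0$ (using $t_{T-l} > 1$ and $l \leq T-2$), and combining the two displays closes the argument.

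The analysis is light; the one point needing care is bookkeeping. One must confirm that the index windows demanded by \eqref{label_185} and by \eqref{label_183} are simultaneously valid for every $l \inn{1}{T-2}$ — which holds as checked above — and that the constants $\mathcal{G}_l$ and $\mathcal{T}_l$ genuinely do not depend on $T$. The latter is true because the backward recursion \eqref{label_182} is seeded by $t_T = A_T/a_T = 2$ (from \eqref{label_143}), so each $t_{T-j}$ is a function of $j$ alone, and likewise $\frac{A_{T-l+1}}{A_{T-1}} = \prod_{j} \frac{t_{T-j}-1}{t_{T-j}}$ over an index set depending only on $l$. Beyond this there is no real obstacle: the proof is a short chain of substitutions with one use each of \eqref{label_139}, \eqref{label_185} and \eqref{label_183}.
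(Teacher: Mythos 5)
Your proof is correct and follows essentially the same route as the paper: both start from \eqref{label_139}, split $A_0/A_{T-1}$ through $A_{T-l+1}$, apply \eqref{label_185} with $k = T-l+1$, and then use \eqref{label_183} to replace $t_1$ by $t_{T-l} + \frac{T-l-1}{2}$, yielding $(T+\mathcal{T}_l)^2$ in the denominator. The only difference is cosmetic — you verify $4t_1^2 \geq (T+\mathcal{T}_l)^2$ as a separate step rather than chaining the inequalities — and your index bookkeeping is sound.
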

\begin{proof}
We can upper bound the final convergence guarantee ratio as
\begin{equation} \label{label_191}
\begin{gathered}
\frac{A_0}{A_{T - 1}} = \frac{A_0}{A_{T - l + 1}} \frac{A_{T - l + 1}}{A_{T - 1}} \overset{\eqref{label_185}}{\leq} \frac{t_{T - l} (t_{T - l} - 1)}{t_1^2} \frac{A_{T - l + 1}}{A_{T - 1}} \\
\overset{\eqref{label_183}}{\leq} \frac{t_{T - l} (t_{T - l} - 1)}{(t_{T - l} + \frac{T - l -1}{2})^2} \frac{A_{T - l + 1}}{A_{T - 1}}
= 4 \frac{t_{T - l} (t_{T - l} - 1)}{(T + \mathcal{T}_l)^2} \frac{A_{T - l + 1}}{A_{T - 1}}, \quad l \inn{1}{T - 2}.
\end{gathered}
\end{equation}
From \eqref{label_139} we have for all $l \inn{1}{T - 2}$ that
\begin{equation}
\| g_T \|_*^2 \leq \frac{2 A_0 L_0}{A_{T - 1}} ( F(x_0) - F(x_T) ) \overset{\eqref{label_191}}{\leq} \frac{\mathcal{G}_l L_0}{(T + \mathcal{T}_l)^2} ( F(x_0) - F(x_T) ).
\end{equation}
\end{proof}
By increasing the value of $l$, we obtain via Proposition~\ref{label_190} progressively more accurate estimates of the rate, as listed in Table~\ref{label_192}.
\begin{table}[h]
\caption{Convergence guarantee coefficients $\mathcal{G}_l $ and $\mathcal{T}_l$ for exponentially increasing $l$}
\label{label_192}
\setlength{\tabcolsep}{4.5pt}
\centering \footnotesize
\begin{tabular}{rrr} \toprule
\multicolumn{1}{c}{$l$} & \multicolumn{1}{c}{$\mathcal{G}_l$} & \multicolumn{1}{c}{$\mathcal{T}_l$} \\ \midrule
1 & 75.7128129 & 3.4641016 \\
2 & 65.0097678 & 3.7883403 \\
5 & 59.1019986 & 4.4316284 \\
10 & 57.5220421 & 5.0803315 \\
100 & 56.6821551 & 7.9500002 \\
1000 & 56.6675000 & 11.2936222 \\
10000 & 56.6673352 & 14.7315296 \\
100000 & 56.6673335 & 18.1833371 \\ \bottomrule
\end{tabular}
\end{table}

However, the rates in Table~\ref{label_192} are only valid when $T \geq l + 2$. For completeness, we also provide a single unified rate for all $T \geq 1$, given by
\begin{equation} \label{label_193}
\| g_T \|_*^2 \leq \frac{56.67 L_0}{(T + 4)^2} ( F(x_0) - F(x_T) ) \leq \frac{56.67 L_0}{(T + 4)^2} ( F(x_0) - F(x^*) ).
\end{equation}
Proposition~\ref{label_190} implies \eqref{label_193} for $T \geq 1000$. It can be verified numerically that \eqref{label_139} gives \eqref{label_193} over of the range $T \inn{2}{999}$. Finally, \eqref{label_029} strictly yields \eqref{label_193} for $T = 1$.

If we choose to run ACGM starting at $x_0$ for $T$ iterations, followed by Algorithm~\ref{label_152} for $T$ iterations, we obtain a convergence guarantee with respect to the initial distance to optimum as
\begin{equation}
\| g_T \|_* \overset{\eqref{label_193}}{\leq} \frac{10.65 L_u}{(T + 1)(T + 4)} d(x_0),
\end{equation}
where $L_u = \max\{\gamma_d L_0, \gamma_u L\}$ (see Section~\ref{label_010}) is the worst-case estimate of $L$ encountered by ACGM, which is assumed to pass the line-search condition for Algorithm~\ref{label_152} at every iteration.

\subsubsection{Averaged gradient mapping rate}

Seeing how for OGM-G minimizes an averaged gradient for all $k \inn{1}{T - 1}$ with a rate of $\mathcal{O}(1/k^2)$ before providing a last iterate with the desired guarantee, the question arises whether OCGM-G exhibits the same behavior. To this end, we first state a counterpart to Proposition~\ref{prop:midway-smooth}.
\begin{proposition} \label{prop:midway-comp}
The state variables of OCGM-G satisfy
\begin{equation} \label{eq:midway-comp}
\frac{a_{k}}{2 L_0} \| g_k \|_*^2 + B_{k, T} \| s_{k} \|_*^2 \leq A_0 (F(x_0) - F(x_k)), \quad k \inn{1}{T}.
\end{equation}
\end{proposition}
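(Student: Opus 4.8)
The plan is to replicate the argument of Proposition~\ref{prop:midway-smooth}, with the composite master inequality \eqref{eq:ocgm-g-main} in place of its smooth analogue and the OCGM-G weight identities \eqref{label_140}, \eqref{label_141}, \eqref{label_142} (together with the extension \eqref{label_143}) in place of \eqref{label_080}--\eqref{label_083}. I would treat the base case $k = 1$ separately and then the range $k \inn{2}{T}$, throughout assuming, as the statement implicitly does, that the line-search condition of OCGM-G passes at every iteration.

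For $k = 1$ the left-hand side of \eqref{eq:midway-comp} simplifies directly: from \eqref{label_044} we have $s_1 = a_1 g_1$, so $B_{1,T}\|s_1\|_*^2 = a_1^2 B_{1,T}\|g_1\|_*^2$, and \eqref{label_142} at $k = 1$, which gives $\frac{a_1}{L_0} + a_1^2 B_{1,T} = \frac{A_1}{2 L_0}$, together with $A_1 = A_0 + a_1$, turns $\frac{a_1}{2 L_0}\|g_1\|_*^2 + B_{1,T}\|s_1\|_*^2$ into $\frac{A_0}{2 L_0}\|g_1\|_*^2$. The inequality is then exactly the descent bound \eqref{label_130} at its $k = 0$ instance --- where $y_1 = x_0$ and $L_1 = L_0$, so that it reduces to $F(x_0) - F(x_1) \geq \frac{1}{2 L_0}\|g_1\|_*^2$ --- multiplied by $A_0 > 0$.

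For $k \inn{2}{T}$ I would apply \eqref{eq:ocgm-g-main} with the horizon $k$ substituted for $T$; this is legitimate because the reasoning leading from \eqref{label_130} to \eqref{eq:ocgm-g-main}, relying on \eqref{label_072}, is valid for every horizon $\geq 2$ once the line-search passes, and the OCGM-G weights obey \eqref{label_140}--\eqref{label_142} at all indices. In that instance every reverse accumulator reads $B_{i,k} = B_{i,T} - B_{k,T}$, respectively $B_{i+1,k} = B_{i+1,T} - B_{k,T}$. Relation \eqref{label_142} then annihilates the $\frac{A_i}{2 L_0} - \frac{a_i}{L_0} - a_i^2 B_{i,T}$ part of each diagonal coefficient, leaving $a_i^2 B_{k,T}$ on $\|g_i\|_*^2$ for $i \inn{1}{k-1}$ and, since $A_{k-1} = A_k - a_k$, the coefficient $\frac{a_k}{2 L_0} + a_k^2 B_{k,T}$ on $\|g_k\|_*^2$; relations \eqref{label_140} and \eqref{label_141} annihilate the $A_i b_i - \frac{1}{L_0} - 2 a_{i+1} B_{i+1,T}$ part of each cross coefficient, leaving $2 a_{i+1} B_{k,T}$ on $\langle s_i, B^{-1} g_{i+1}\rangle$ and $2 a_k B_{k,T}$ on $\langle s_{k-1}, B^{-1} g_k\rangle$. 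Collecting terms, the factor $B_{k,T}$ multiplies $\sum_{i=1}^{k} a_i^2\|g_i\|_*^2 + 2\sum_{i=1}^{k-1} a_{i+1}\langle s_i, B^{-1} g_{i+1}\rangle$, which by \eqref{label_044} equals $\langle s_k, B^{-1} s_k\rangle = \|s_k\|_*^2$ --- the recombination already carried out in \eqref{eq:ocgm-102} --- while the residual $\frac{a_k}{2 L_0}\|g_k\|_*^2$ survives, yielding \eqref{eq:midway-comp}. The endpoint $k = T$ is consistent: there $B_{T,T} = 0$ and $a_T = A_{T-1}$ by \eqref{label_143}, so \eqref{eq:midway-comp} collapses to \eqref{label_139}, with \eqref{label_141} taking the role of \eqref{label_140} at index $T - 1$.

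I expect the only real obstacle to be bookkeeping: confirming that the ``horizon $k$'' copy of \eqref{eq:ocgm-g-main} is assembled from the \emph{same} OCGM-G weight sequences --- so that \eqref{label_140}--\eqref{label_142} may be invoked at all indices up to $k$ --- with the telescoping identity $B_{i,k} + B_{k,T} = B_{i,T}$ as the sole structural ingredient, and tracking the one genuinely new feature relative to the smooth case, the residual $\frac{a_k}{2 L_0}\|g_k\|_*^2$. That term appears precisely because here the $\|g_k\|_*^2$ coefficient is $\frac{A_{k-1}}{2 L_0}$, and \eqref{label_142} forces $\frac{A_{k-1}}{2 L_0} = \frac{a_k}{2 L_0} + a_k^2 B_{k,T}$, whereas in the smooth analysis the corresponding coefficient was $\frac{A_{k-1}}{L}$ and coincided exactly with $a_k^2 B_{k,T}$.
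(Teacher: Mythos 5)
Your proposal is correct and follows essentially the same route as the paper's proof: the base case $k=1$ via the descent rule and the identity $\frac{a_1}{2L_0} + a_1^2 B_{1,T} = \frac{A_0}{2L_0}$, the endpoint $k=T$ collapsing to \eqref{label_139}, and the intermediate range by substituting the horizon $k$ into \eqref{eq:ocgm-g-main}, translating $B_{i,k} = B_{i,T} - B_{k,T}$, cancelling via \eqref{label_140}--\eqref{label_142}, and recombining the surviving $B_{k,T}$ terms into $\|s_k\|_*^2$. The residual $\frac{a_k}{2L_0}\|g_k\|_*^2$ you isolate is exactly the paper's identity \eqref{label_194}.
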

\begin{proof}
Refactoring \eqref{label_148} produces
\begin{equation} \label{label_194}
\frac{A_{k - 1}}{2 L_0} - a_k^2 B_{k, T} = \frac{a_k}{2 L_0}, \quad k \inn{1}{T}.
\end{equation}

When $k = 1$, combining $s_1 = a_1 g_1$, \eqref{label_194} as well as \eqref{label_029} with $L_1 = L_0$ produces \eqref{eq:midway-comp}. Likewise, when $k = T$, $B_{T,T} = 0$, \eqref{label_143} and \eqref{label_139} imply \eqref{eq:midway-comp}.

Throughout the remainder of this proof we consider the range $k \inn{2}{T - 1}$. We substitute $T$ with $k$ in \eqref{eq:ocgm-g-main} and set $L_i = L_0$ for all $i \inn{1}{k}$. Noting that for all $i \inn{1}{k}$ it holds that $B_{i, k} = B_{i, T} - B_{k, T}$ we obtain
\begin{equation} \label{eq:ocgm-gk}
\begin{gathered}
A_0 (F(x_0) - F(x_k)) \geq \sum_{i = 1}^{k - 1} \left( \frac{A_i}{2 L_0} - \frac{a_i}{L_0} - a_i^2 (B_{i, T} - B_{k, T}) \right) \| g_{i} \|_*^2 \\
+ a_k^2 B_{k, T} \| g_k \|_*^2 + \left(\frac{A_{k - 1}}{2 L_0} - a_k^2 B_{k, T}\right)\| g_k \|_*^2 \\
+ \sum_{i = 1}^{k - 2} \left( A_{i} b_{i} - \frac{1}{L_0} - 2 a_{i + 1} (B_{i + 1, T} - B_{k, T}) \right) \langle s_i, B^{-1} g_{i + 1} \rangle \\
+ \left( A_{k - 1} b_{k - 1} - \frac{1}{L_0} - 2 a_k B_{k, T} + 2 a_k B_{k, T} \right) \langle s_{k - 1}, B^{-1} g_{k} \rangle.
\end{gathered}
\end{equation}
Applying \eqref{label_140} and \eqref{label_142} to \eqref{eq:ocgm-gk} we obtain
\begin{equation} \label{eq:ocgm-gred}
\begin{gathered}
A_0 (F(x_0) - F(x_k)) \geq \left(\frac{A_{k - 1}}{2 L_0} - a_k^2 B_{k, T}\right) \| g_k \|_*^2 \\
+ B_{k, T} \left( \sum_{i = 1}^{k - 1} a_i^2 \| g_{k} \|_*^2 + a_k^2 \| g_k \|_*^2 + 2 \sum_{i = 1}^{k - 2} a_{i + 1} \langle s_i, g_{i + 1} \rangle + 2 a_k \langle s_{k - 1}, g_{k} \rangle \right).
\end{gathered}
\end{equation}
Simplifying the coefficient of the $\| g_k \|_*^2$ term in \eqref{eq:ocgm-gred} using \eqref{label_194} and rearranging terms using \eqref{label_044} completes the proof.
\end{proof}

Proposition~\ref{prop:midway-comp} describes the convergence behavior for all iterations. The current gradient mapping $g_k$ has a rate governed by $a_k$, which increases sharply during the last iterations to reach the final guarantee in \eqref{label_139} and the rate of the weighted sum of the gradients $s_k$ has the same expression as in Proposition~\ref{prop:midway-smooth}.

We next seek to determine a simple rate for the averaged gradient mapping. To compute a lower bound on this rate, we first need several technical results concerning three functions of one variable. The proof can be found in Appendix~\ref{label_224}.
\begin{lemma} \label{label_195}
The scalar functions $q(x)$, $r(x, \kappa)$ and $\bar{r}(x, \kappa)$, respectively defined for all $x > 1$ and $\kappa \geq 2$ with $\kappa \in \mathbb{R}$ as
\begin{equation} \label{label_196}
\begin{gathered}
q(x) \defq \frac{(x - 1)^2}{x}, \quad r(x, \kappa) \defq \frac{1}{\kappa^2}q\left( \frac{\left(\frac{\kappa}{2} + x - 1\right)^2}{x (x - 1)} \right), \\
\bar{r} \defq (x - 1) (x - 3) r(x, \kappa),
\end{gathered}
\end{equation}
satisfy
\begin{gather}
q(x) > 0, \quad q'(x) > 0, \quad x > 1, \label{label_197} \\
r(x, 2) = \lim_{\kappa \rightarrow \infty} r(x, \kappa) = \frac{1}{4 x (x - 1)}, \quad x > 1, \label{label_198} \\
r(x, \kappa) \geq r(x, 2), \quad x > 1, \quad \kappa \geq 2, \label{label_199} \\
\bar{r}(x, \kappa) > \bar{r}(y, \kappa), \quad x > y \geq 3, \quad \kappa \in \{2\} \cup [3, \infty). \label{label_200}
\end{gather}
\end{lemma}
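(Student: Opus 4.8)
The plan is to reduce all four assertions to elementary calculus on explicit rational/radical expressions, the crucial device being the factorization $q(u) = \left(\sqrt{u} - 1/\sqrt{u}\right)^2$. Applying this with $u = \frac{\left(\frac{\kappa}{2} + x - 1\right)^2}{x(x-1)}$, the argument of $q$ inside the definition of $r$ in \eqref{label_196}, and using the elementary identity $\left(\frac{\kappa}{2} + x - 1\right)^2 - x(x-1) = \frac{\kappa^2}{4} + (\kappa-1)(x-1)$, I would first derive the closed form
\begin{equation*}
r(x,\kappa) = \frac{1}{\kappa^2}\left( \frac{\frac{\kappa^2}{4} + (\kappa-1)(x-1)}{\left(\frac{\kappa}{2} + x - 1\right)\sqrt{x(x-1)}} \right)^2, \qquad x > 1,\ \kappa \geq 2,
\end{equation*}
which serves as the workhorse for \eqref{label_198}, \eqref{label_199} and \eqref{label_200}. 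Claim \eqref{label_197} is immediate, since $q(x) = (x-1)^2/x > 0$ and $q'(x) = (x^2-1)/x^2 > 0$ for $x > 1$. For \eqref{label_198}, substituting $\kappa = 2$ into the closed form makes the numerator equal to $x$ and cancels every $x$-dependent factor except $x(x-1)$, yielding $r(x,2) = \frac{1}{4x(x-1)}$; letting $\kappa \to \infty$, the numerator is asymptotic to $\kappa^2/4$ and the bracketed denominator to $\frac{\kappa}{2}\sqrt{x(x-1)}$, so a short estimate of the lower-order terms gives $r(x,\kappa) \to \frac{1}{4x(x-1)}$ as well.

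For \eqref{label_199} I would first note that $u > 1$, because $\kappa \geq 2$ forces $\frac{\kappa}{2} + x - 1 \geq x > \sqrt{x(x-1)}$; hence $\sqrt{u} - 1/\sqrt{u} > 0$, and the inequality $r(x,\kappa) \geq r(x,2)$, being between squares of positive quantities, is equivalent to $\frac{\frac{\kappa^2}{4} + (\kappa-1)(x-1)}{\kappa\left(\frac{\kappa}{2} + x - 1\right)} \geq \frac{1}{2}$. Clearing denominators and cancelling the common $\frac{\kappa^2}{4}$ reduces this to $2(\kappa-1)(x-1) \geq \kappa(x-1)$, i.e. $(x-1)(\kappa - 2) \geq 0$, which holds for all $x > 1$ and $\kappa \geq 2$.

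For \eqref{label_200} the cleanest route is to establish strict monotonicity of $x \mapsto \bar r(x,\kappa)$ on $[3,\infty)$. Writing $P(x) \defq \frac{\kappa^2}{4} + (\kappa-1)(x-1)$ and $R(x) \defq \frac{\kappa}{2} + x - 1$, the closed form together with the cancellation of the factor $x-1$ gives $\bar r(x,\kappa) = \frac{1}{\kappa^2}\cdot\frac{(x-3)\,P(x)^2}{x\,R(x)^2}$. On $(3,\infty)$, where $\bar r > 0$, its logarithmic derivative in $x$ equals
\begin{equation*}
\frac{1}{x-3} - \frac{1}{x} + \frac{2(\kappa-1)}{P(x)} - \frac{2}{R(x)} = \frac{3}{x(x-3)} + \frac{\kappa(\kappa-2)}{2\,P(x)R(x)},
\end{equation*}
the last simplification resting on the identity $(\kappa-1)R(x) - P(x) = \frac{\kappa(\kappa-2)}{4}$. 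For $\kappa \geq 2$ and $x > 3$ both summands are nonnegative and the first is strictly positive, so $\bar r(\cdot,\kappa)$ is strictly increasing on $(3,\infty)$; being continuous and vanishing at $x=3$, it is then strictly increasing on all of $[3,\infty)$, which yields \eqref{label_200} — in fact for every $\kappa \geq 2$, hence a fortiori on $\{2\}\cup[3,\infty)$. The computations are essentially mechanical; the places that demand the most care are the two algebraic cancellations just used, the control of the lower-order terms in the $\kappa \to \infty$ limit of \eqref{label_198}, and the endpoint at $x = 3$ in \eqref{label_200}, where the logarithmic derivative is singular but $\bar r$ itself vanishes — I expect this last point to be the main technical subtlety.
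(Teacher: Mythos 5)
Your proposal is correct, and for the substantive part \eqref{label_200} it takes a genuinely different route from the paper. For \eqref{label_197}--\eqref{label_199} the two arguments are essentially the same in different clothing: the paper works from $q(x) = x + \tfrac{1}{x} - 2$ and a pair of substitution variables $x_1 = 2(x-1)$, $x_2 = 4x(x-1)$, reducing \eqref{label_199} to a chain of equivalent inequalities that it then squares, whereas you factor $q(u) = \bigl(\sqrt{u} - 1/\sqrt{u}\bigr)^2$ and take square roots directly; both reduce \eqref{label_199} to $(x-1)(\kappa-2) \geq 0$, and your identity $\bigl(\tfrac{\kappa}{2}+x-1\bigr)^2 - x(x-1) = \tfrac{\kappa^2}{4} + (\kappa-1)(x-1)$ checks out. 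The real divergence is in \eqref{label_200}: the paper differentiates $\bar{r}$ as a rational function, isolates a quadratic $\bar{q}(x,\kappa)$ in the numerator, and argues via the sign of its discriminant, which is what forces the case split $\kappa = 2$ versus $\kappa \geq 3$ and is the reason the lemma excludes $\kappa \in (2,3)$. Your logarithmic-derivative computation, resting on the identity $(\kappa-1)R(x) - P(x) = \tfrac{\kappa(\kappa-2)}{4}$ (which is correct), exhibits the derivative as $\tfrac{3}{x(x-3)} + \tfrac{\kappa(\kappa-2)}{2P(x)R(x)}$, a sum of a strictly positive and a nonnegative term for every $\kappa \geq 2$ and $x > 3$, with no case analysis and no discriminant; together with $\bar{r}(3,\kappa) = 0$ and $\bar{r} > 0$ on $(3,\infty)$ this handles the endpoint cleanly. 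What your approach buys is a strictly stronger conclusion --- monotonicity of $\bar{r}(\cdot,\kappa)$ on $[3,\infty)$ for \emph{all} $\kappa \geq 2$, not just $\kappa \in \{2\} \cup [3,\infty)$ --- so the lemma's hypothesis on $\kappa$ could be relaxed; the cost is only that one must note $r > 0$ on $x > 3$ to justify taking logarithms, which you do.
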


We can now provide the main rate result.
\begin{proposition} \label{prop:avgrad-comp}
The dual norm of the averaged composite gradient mapping satisfies at runtime
\begin{equation} \label{eq:avg-mid-comp}
\frac{a_{k}}{2 L_0 A_0} \| g_k \|_*^2 + \mathcal{S}_k \| \bar{s}_{k} \|_*^2 \leq F(x_0) - F(x_k), \quad k \inn{2}{T - 1},
\end{equation}
with a rate
\begin{equation} \label{label_201}
\mathcal{S}_k \defq B_{k, T} \frac{(A_k - A_0)^2}{A_0} \geq \frac{\mathcal{R}_k}{L_0} k^2, \quad \mathcal{R}_k \defq \frac{t_k (t_k - 2)}{2} r(t_{k - 1}, k),
\end{equation}
where
\begin{equation} \label{label_202}
\mathcal{R}_k > 0.05, \quad k \inn{2}{T - 3}, \quad \mathcal{R}_{T - 2} > 0.048, \quad \mathcal{R}_{T - 1} > 0.03.
\end{equation}
\end{proposition}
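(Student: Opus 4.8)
The plan is to split the statement into three parts, handled in turn: the inequality \eqref{eq:avg-mid-comp}, the closed form and lower bound for the rate $\mathcal{S}_k$ in \eqref{label_201}, and the numerical estimates \eqref{label_202}. The inequality \eqref{eq:avg-mid-comp} I would derive directly from Proposition~\ref{prop:midway-comp}. By the definition of the averaged gradient mapping, $\bar{s}_k = \frac{1}{A_k - A_0} s_k$, so $\| s_k \|_*^2 = (A_k - A_0)^2 \| \bar{s}_k \|_*^2$; dividing the inequality of Proposition~\ref{prop:midway-comp} by $A_0$ turns the coefficient of $\| \bar{s}_k \|_*^2$ into $B_{k, T} (A_k - A_0)^2 / A_0 = \mathcal{S}_k$, while the $\frac{a_k}{2 L_0 A_0} \| g_k \|_*^2$ term carries over verbatim, giving \eqref{eq:avg-mid-comp} on $k \inn{2}{T - 1}$ (indeed on all of $k \inn{1}{T}$).

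For the rate I would first establish the closed form $\mathcal{S}_k = \frac{t_k (t_k - 2)}{2 L_0}\, q\!\left( \frac{A_k}{A_0} \right)$: using \eqref{label_148} together with $A_k = t_k a_k$ gives $B_{k, T} = \frac{t_k - 2}{2 L_0 a_k}$, and substituting $a_k = A_k / t_k$ into $B_{k,T}(A_k - A_0)^2/A_0$ and recognizing $\frac{(A_k - A_0)^2}{A_0 A_k} = q(A_k/A_0)$ yields the claim. To bound $A_k/A_0$ from below, for $k \inn{3}{T}$ I would combine \eqref{label_185} with $t_1 \geq \frac{k}{2} + t_{k-1} - 1$ (which is \eqref{label_183} with $l = k - 2$), and for $k = 2$ use $A_2/A_0 = \frac{t_2}{t_2 - 1}\frac{t_1}{t_1 - 1} \geq \frac{t_1}{t_1 - 1}$; in all cases $\frac{A_k}{A_0} \geq \frac{(\frac{k}{2} + t_{k-1} - 1)^2}{t_{k-1}(t_{k-1} - 1)} > 1$. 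Since $q$ is increasing on $(1,\infty)$ by \eqref{label_197}, and since $q$ evaluated at this bound is exactly $k^2 r(t_{k-1}, k)$ by the definition of $r$ in \eqref{label_196}, I obtain $\mathcal{S}_k \geq \frac{t_k (t_k - 2)}{2 L_0} k^2 r(t_{k-1}, k) = \frac{\mathcal{R}_k}{L_0} k^2$, which is \eqref{label_201}.

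For the numerical bounds \eqref{label_202} the key point is that a $T$-independent lower bound on $\mathcal{R}_k$ suffices. Applying \eqref{label_199} and \eqref{label_198} gives $r(t_{k-1}, k) \geq r(t_{k-1}, 2) = \frac{1}{4 t_{k-1}(t_{k-1} - 1)}$, hence $\mathcal{R}_k \geq \frac{t_k(t_k - 2)}{8 t_{k-1}(t_{k-1} - 1)}$. Since the $t$-values obey the backward recursion \eqref{label_189} anchored at $t_T = 2$ (from \eqref{label_143}), both $t_k$ and $t_{k-1}$ depend only on $j := T - k$, so this lower bound is a function $\phi(j)$ of $j$ alone; eliminating $t_{k-1}(t_{k-1}-1) = t_k(t_k - 1) + t_{k-1}$ via \eqref{label_182} and substituting $z = 2 t_{T - j} - 1$ (which increases with $j$ because $t_{T-j}$ grows as $j$ grows, by \eqref{label_183}), $\phi$ becomes an explicit function of $z$ and $\sqrt{z^2 + 3}$ whose monotonicity for $z \geq 3$ is precisely the type of one-variable fact packaged in Lemma~\ref{label_195}, cf. \eqref{label_200} (and \eqref{label_199} for the uniformity in $\kappa$). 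It then remains to evaluate $\phi$ at the first indices: from $t_T = 2$, $t_{T-1} = 1 + \sqrt{3}$ and \eqref{label_189} one gets $t_{T-2}, t_{T-3}$ in closed algebraic form and checks $\phi(1) = \frac{2}{8\, t_{T-2}(t_{T-2} - 1)} > 0.03$, $\phi(2) > 0.048$, $\phi(3) > 0.05$; monotonicity upgrades the last to $\phi(j) > 0.05$ for all $j \geq 3$, and translating back via $j = T - k$ gives \eqref{label_202}.

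The main obstacle is this final monotonicity. The naive estimate $t_k(t_k - 2) \geq (t_{k-1} - 1)(t_{k-1} - 3)$ is too lossy — it only delivers a bound near $0.044$, below the required $0.05$ — so one cannot afford to weaken the ratio $\frac{t_k(t_k-2)}{8 t_{k-1}(t_{k-1}-1)}$ and must prove monotonicity of the exact expression along the recursion. This is exactly what the auxiliary functions $q$, $r$, $\bar{r}$ of Lemma~\ref{label_195} are designed for; the remaining steps, including the closed-form evaluations at $t_{T-1} = 1 + \sqrt{3}$ and its successors, are routine algebra.
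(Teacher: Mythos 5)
Your treatment of \eqref{eq:avg-mid-comp} and of the identity/lower bound \eqref{label_201} matches the paper's proof essentially step for step: the same reduction $\mathcal{S}_k = \frac{t_k(t_k-2)}{2L_0}\,q(A_k/A_0)$ via \eqref{label_148} and \eqref{label_180}, the same use of \eqref{label_197}, \eqref{label_185} and \eqref{label_183} to recognize $q$ of the lower bound as $k^2 r(t_{k-1},k)$. (Your separate patch for $k=2$, where \eqref{label_185} is only stated from $k=3$, is a small improvement in care over the paper, which applies \eqref{label_203} on $k\inn{2}{T-1}$ without comment.)

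Where you diverge is \eqref{label_202}, and there your route has a gap. The paper does \emph{not} need the "naive" chain $t_k(t_k-2)\geq(t_{k-1}-1)(t_{k-1}-3)$ to hold for all $k$: it uses it only for $k\inn{2}{T-4}$, where $t_{k-1}\geq t_{T-5}>5$ gives $\bar r(t_{k-1},k)>\bar r(5,k)\geq\bar r(5,2)=0.1$ via \eqref{label_200} and \eqref{label_199}, hence $\mathcal{R}_k>0.05$ exactly at the threshold; the three tail indices $k\in\{T-3,T-2,T-1\}$ are then evaluated directly (your $\phi(3),\phi(2),\phi(1)$, with the same numerical values). Your observation that the lossy chain yields only $\approx 0.044$ is correct precisely at $k=T-3$, but that index is outside the range where the paper invokes it. To avoid the case split you instead claim monotonicity in $j=T-k$ of the exact ratio $\phi(j)=\frac{t_{T-j}(t_{T-j}-2)}{8\,t_{T-j-1}(t_{T-j-1}-1)}$, and you attribute this to Lemma~\ref{label_195}. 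That attribution is wrong: \eqref{label_200} asserts monotonicity in $x$ of $\bar r(x,\kappa)=(x-1)(x-3)r(x,\kappa)$, which is a different function (it is exactly the object needed for the paper's lossy chain, not for your $\phi$). Your monotonicity claim is in fact true --- writing $z=2t_k-1$ one gets $\phi=\frac{(z-3)(z+1)}{8(z^2+3+2\sqrt{z^2+3})}$, which one can check is increasing for the relevant $z$ --- but it requires its own derivative computation that neither the lemma nor your sketch supplies. So either carry out that one-variable monotonicity proof explicitly, or adopt the paper's simpler two-regime split, which needs nothing beyond the stated lemma.
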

\begin{proof}
We first eliminate the reverse accumulator $B_{k, T}$ from the rate expression of the averaged gradient $\bar{s}_k$ by replacing it with the almost linearly decreasing $t_k$ sequence:
\begin{equation}
\mathcal{S}_k \overset{\eqref{label_148}}{=} \frac{A_k - 2 a_k}{2 L_0 a_k^2} \frac{(A_k - A_0)^2}{A_0} \overset{\eqref{label_180}}{=} \frac{1}{2 L_0} t_k (t_k - 2) q\left( \frac{A_k}{A_0} \right), \quad k \inn{2}{T - 1}.
\end{equation}
We have $A_k > A_0$. Combining the increasing property of $q$ in \eqref{label_197} with the lower bound in \eqref{label_185} we get for all $k \inn{2}{T - 1}$
\begin{equation} \label{label_203}
\mathcal{S}_k \geq \frac{1}{2 L_0} t_k (t_k - 2) q\left( \frac{t_1^2}{t_{k - 1}(t_{k - 1} - 1)} \right) \overset{\eqref{label_183}}{\geq} \frac{k^2}{2 L_0} t_k (t_k - 2) r(t_{k - 1}, k).
\end{equation}
Rearranging \eqref{label_203} gives \eqref{label_201}.

The values $t_T = 2$ and $t_{T - 5} \approx 5.21581 > 5$ imply by \eqref{label_183} that
\begin{equation} \label{label_204}
t_k \geq 2, \quad k \inn{1}{T}, \quad t_k > 5, \quad k \inn{1}{T - 5}.
\end{equation}

From \eqref{label_189} we have
\begin{equation} \label{label_205}
t_k - 1 = \sqrt{t_{k + 1}^2 - t_{k + 1} + 1} < t_{k + 1}, \quad k \inn{1}{T - 1}.
\end{equation}
Lower bounding $t_k$ in the definition of $\mathcal{R}_k$ in \eqref{label_201} using \eqref{label_205} we obtain
\begin{equation} \label{label_206}
\mathcal{R}_k \geq \frac{(t_{k - 1} - 1) (t_{k - 1} - 3)}{2} r(t_{k - 1}, k) = \frac{1}{2} \bar{r}(t_{k - 1}, k), \quad k \inn{2}{T - 1}.
\end{equation}
We can use \eqref{label_204} to provide a rough lower bound for $\bar{r}(t_{k - 1}, k)$ as
\begin{equation} \label{label_207}
\bar{r}(t_{k - 1}, k) \overset{\eqref{label_200}}{>} \bar{r}(5, k) \overset{\eqref{label_199}}{\geq} \bar{r}(5, 2) \overset{\eqref{label_198}}{=} 0.1, \quad k \inn{2}{T - 4}.
\end{equation}
Putting together \eqref{label_206} and \eqref{label_207} we obtain \eqref{label_202} for $k \inn{2}{T - 4}$.

We also have $\mathcal{R}_{T - 3} \approx 0.06055 > 0.05$, $\mathcal{R}_{T - 2} \approx 0.04869 > 0.048$, and $\mathcal{R}_{T - 1} \approx 0.03076 > 0.03$ yielding \eqref{label_202} for $k \inn{T-3}{T-1}$.
\end{proof}

\subsection{Relationship to FGM}

The introduction of the $t_k$ sequence in Section~\ref{label_179} has revealed a startling similarity between OCGM-G and the original form of FGM introduced in \cite{ref_018}, later extended to composite objectives as FISTA in \cite{ref_002}. To make the correspondence more clear, we limit ourselves in this section to studying methods that know the number of iterations $T \geq 2$ in advance and do not attempt to estimate any problem parameters, being fully aware of $L$. We denote such schemes as reduced.

In FGM, the sequence $t_k = A_k / a_k$ increases almost linearly whereas in OCGM-G the sequence has the same expression \eqref{label_180} in terms of convergence guarantees and \emph{decreases} almost in the same way, namely
\begin{equation}
(t_{\on{OCGM-G}})_k \geq \frac{T + 4 - k}{2}, \quad (t_{\on{FGM}})_k \geq \frac{k + 1}{2}, \quad k \inn{1}{T}.
\end{equation}
Furthermore, the extrapolation rule in Algorithm~\ref{label_164} can be expressed in terms of the $t_k$ sequence using \eqref{label_186}, used in the proof of Lemma~\ref{label_181}. We can thus write down the reduced variant of Algorithm~\ref{label_164} in a form that closely resembles reduced FGM, here shown in Figure~\ref{label_208}. In the reduced setup, the algorithm parameters for FGM can be precomputed as well. Also for FGM we have $(t_{\on{FGM}})_0 = \sqrt{L A_0}$~\cite{ref_005} but we have chosen $t_0 = A_0 = 0$ to eliminate the initial function residual from the convergence analysis.

\begin{figure}[h]
\begin{minipage}[t]{0.47\linewidth}
\raggedright Reduced OCGM-G / FISTA-G

\begin{algorithmic}[1]
\STATE \textbf{Input:} $x_0 \in \mathbb{E}$, $T \geq 2$
\STATE Precompute $t_k > t_T$, $k \inn{0}{T-1}$ \emph{in reverse} starting at $t_T = 2$ using $t_{k + 1} (t_{k + 1} - 1) = t_k (t_k - 2)$.
\STATE $x_{-1} = x_0$
\FOR{$k = 0, \ldots{}, T - 1$}
\STATE $y_{k + 1} = x_k + \frac{t_{k + 1} - 1}{t_k}(x_k - x_{k - 1})$
\STATE $x_{k + 1} = T_{L}(y_{k + 1})$
\ENDFOR
\STATE \textbf{Output:} $g_T = L (y_T - x_T)$
\end{algorithmic}

\end{minipage}
\begin{minipage}[t]{0.47\linewidth}
\raggedright Reduced FGM / FISTA

\begin{algorithmic}[1]
\STATE \textbf{Input:} $x_0 \in \mathbb{E}$, $T \geq 2$
\STATE Precompute $t_k > t_0$, $k \inn{1}{T}$ \emph{forward} starting at $t_0 = 0$ using $t_{k + 1} (t_{k + 1} - 1) = t_k$.
\STATE $x_{-1} = x_0$
\FOR{$k = 0, \ldots{}, T - 1$}
\STATE $y_{k + 1} = x_k + \frac{t_{k} - 1}{t_{k + 1}}(x_k - x_{k - 1})$
\STATE $x_{k + 1} = T_{L}(y_{k + 1})$
\ENDFOR
\STATE \textbf{Output:} $x_T$
\end{algorithmic}
\end{minipage}
\caption{Side-by-side comparison between the reduced variants of OCGM-G and FGM}
\label{label_208}
\end{figure}

\subsection{A complete approach}

Our OCGM-G formulation in Algorithm~\ref{label_152} lacks several desirable characteristics. Its guarantees are expressed in terms of the initial function residual, the number of iterations $T \geq 2$ has to be specified in advance and if the line-search criterion fails at any point the algorithm does not provide an adequate guarantee for its output gradient mapping. All these shortcomings can be alleviated by running OCGM-G within the meta-scheme listed in Algorithm~\ref{label_209}.

\begin{algorithm}[h]
\caption{Quasi-online parameter-free ACGM + OCGM-G}
\label{label_209}
\begin{algorithmic}[1]
\STATE \textbf{Input:} $x_0 \in \mathbb{E}$, $L_0 > 0$, $\gamma_d \leq 1$, $\gamma_u > 1$
\STATE $r_0 = x_0$
\STATE $\Lmax = \bar{L}_0 = L_0$
\STATE $T_0 = 2$
\FOR {$j = 0, 1, ...$}
\STATE $(\bar{r}_{j + 1,0}, \bar{L}_{j + 1}) = \mbox{ACGM}(r_j, \bar{L}_j, \gamma_d, \gamma_u, T_j)$ \label{label_209_acgm}
\STATE $\Lmax := \max\{\Lmax, \mbox{maximal } L_{k + 1} \mbox{ within ACGM}\}$ \label{label_209_lmax}
\FOR {$i = 0, 1, ...$}
\STATE $(\bar{r}_{j + 1, i + 1}, \bar{g}_{j + 1, i + 1}) = \mbox{OCGM-G}(\bar{r}_{j + 1, i}, \Lmax, T_j)$ \label{label_209_ocgm}
\IF {OCGM-G line-search success}
\STATE $r_{j + 1} = \bar{r}_{j + 1, i + 1}$ \label{label_209_r}
\STATE $\bar{g}_{j + 1} = \bar{g}_{j + 1, i + 1}$ \label{label_209_g}
\STATE $n_{j + 1} = i + 1$
\STATE break
\ELSE
\STATE $\Lmax = \gamma_u \Lmax$
\ENDIF
\ENDFOR
\STATE $T_{j + 1} = 2 T_j$
\ENDFOR
\end{algorithmic}
\end{algorithm}

The meta-algorithm runs in cycles, each cycle calling one instance of ACGM followed by one or more instances of OCGM-G. Starting at $r_0 = x_0$, Algorithm~\ref{label_209} runs ACGM for $T_0 = 2$ iterations with an initial Lipschitz estimate of $L_0$. The output $\bar{r}_{1, 0}$ as well as the largest Lipschitz estimate $\Lmax$ so far encountered are supplied to OCGM-G, which is also run with $T = T_0$. If OCGM-G reports a line-search failure, then it is re-run starting at the last point outputted by its previous instance, multiplicatively increasing $\Lmax$ until the line-search criterion passes at every internal iteration. When OCGM-G terminates successfully, the first cycle $j = 0$ is completed. The number of iterations $T$ is doubled and the combination ACGM with looped OCGM-G is called again in a new cycle $j = 1$ and so on. Throughout all cycles, the value of $\Lmax$ increases either when ACGM encounters a higher Lipschitz estimate, or when the line-search criterion of OCGM-G fails. This helps keep the overall number of unfinished instances of OCGM-G to a minimum. No effort is wasted because the output of every ACGM or OCGM-G instance is supplied as input to the next. In this respect our meta-algorithm is a restarting schedule.

Let us review the variables used within Algorithm~\ref{label_209}. Our meta-scheme calls ACGM and OCGM at every cycle $j \geq 0$ with several input and output parameters. ACGM is supplied with $r_j$ and $\bar{L}_j$, which become the starting point $x_0$ and initial Lipschitz estimate $L_0$ internally. The line-search parameters $\gamma_d$ and $\gamma_u$ are passed as-is whereas $T_j = 2^{j + 1}$ denotes the predetermined number of iterations of the ACGM instance. Upon termination, the ACGM state parameters $x_{T_j}$ and $L_{T_j}$ are outputted as $\bar{r}_{j + 1,0}$ and $\bar{L}_{j + 1}$, respectively, with $\bar{L}_{j + 1}$ to be taken up by the next instance of ACGM, which will be called in the next cycle. The variable $\Lmax$ is global and updated by ACGM internally at every iteration $k$ as $\Lmax := \max\{\Lmax, L_{k + 1}\}$ ensuring that no Lipschitz estimate of ACGM exceeds it.

OCGM-G is looped $n_{j + 1}$ times, each call indexed by $i \inn{0}{n_{j + 1} - 1}$ is supplied with $\bar{r}_{j + 1, i}$, $\Lmax$ and $T_j$ becoming $x_0$, $L_0$ and $T$, respectively, within the OCGM-G instance. Every output consists of $\bar{r}_{j + 1, i + 1}$ and $\bar{g}_{j + 1, i + 1}$, which represent $x_T$ and $g_T$ in Algorithm~\ref{label_152}, even when terminated early. Upon line-search success, the cycle ends and outputs the valid point $r_{j + 1} = \bar{r}_{j + 1, n_{j + 1}}$ and gradient mapping $\bar{g}_{j + 1} = \bar{g}_{j + 1, n_{j + 1}}$. The whole meta scheme terminates when either a predetermined computational budget is expended, the composite gradient norm decreases below a predetermined threshold or on any other verifiable criterion is triggered.

Our scheme is parameter-free and, unlike the approach in \cite{ref_013}, it is quasi-online and the optional termination parameters do not influence its operation.

\subsubsection{Convergence analysis}

Next, we study the guarantees provided by our meta-scheme. The called ACGM instances always make progress in function residual with respect to their \emph{input} state whereas Proposition~\ref{prop:midway-comp} ensures the same kind of behavior for OCGM-G, even when terminated early. We thus have
\begin{equation} \label{label_215}
\begin{gathered}
F(r_{j + 1}) = F(\bar{r}_{j + 1, n_{j + 1}}) \leq... F(\bar{r}_{j + 1, i + 1}) \leq F(\bar{r}_{j + 1, i}) \leq ... \leq F(\bar{r}_{j + 1, 0}) \\
\leq F(r_j) \leq ... \leq F(x_0), \quad i \inn{0}{n_{j + 1} - 1}, \quad j \geq 0.
\end{gathered}
\end{equation}
At the end of each cycle $j \geq 0$, we have
\begin{equation} \label{label_216}
\begin{gathered}
\| \bar{g}_{j + 1} \|_*^2 \overset{\eqref{label_193}}{\leq} \frac{56.67 L_u}{T_j^2}\left( F\left(\bar{r}_{j + 1, n_{j + 1} - 1}\right) - F^* \right) \\
\overset{\eqref{label_215}}{\leq} \frac{56.67 L_u}{T_j^2}\left( F(\bar{r}_{j + 1}, 0) - F^* \right)
\leq \frac{113.34 L_u^2}{T_j^4} d(r_j)^2, \quad j \geq 0,
\end{gathered}
\end{equation}
where $T_j = 2^{j + 1}$, $j \geq 0$, and $L_u = \max\{\gamma_d L_0, \gamma_u L\}$, defined in Section~\ref{label_010}, is the worst-case value of $L_{\on{max}}$.

Assuming no line-search failures are reported by OCGM-G, the number of ACGM and OCGM-G iterations needed to produce $\bar{g}_{j + 1}$ is $N = 2 \sum_{i = 1}^{j} T_j = 4 T_j - 4$. Let the total number of OCGM-G line-search failures be designated by $b$. Because $L_{\on{max}}$ is multiplicatively updated after every OCGM-G failure and no failures occur once $L_{\on{max}} \geq L$, it follows that $b$ is upper bounded as $b \leq \left\lceil \frac{log(L) - log(L_0)}{log(\gamma_u)} \right\rceil$.
The upper bound on $N$ is reached when ACGM fails to increase $L_{\max}$ at every instance and all line-search failures of OCGM-G take place during the last cycle, every time during the last OCGM-G iteration. We thus have $N \leq (4 + b) T_j - 4$ implying $T_j \geq N / (4 + b)$ which, combined with \eqref{label_216}, gives
\begin{equation} \label{label_217}
\| g_N \|_* \leq \frac{10.65 (4 + b)^2 L_u}{N^2} d(r_j) \leq \frac{10.65 \left(4 + \left\lceil \frac{log(L) - log(L_0)}{log(\gamma_u)} \right\rceil\right)^2 L_u}{N^2} d(r_j),
\end{equation}
where $g_N$ is the composite gradient mapping obtained after $N$ iterations of ACGM and OCGM-G, $r_j$ is the input of the last cycle $j \geq 0$ and $N = (4 + b) 2^{j + 1} - 4$.

When dealing with elongated level sets, it is possible to express the worst-case bound in \eqref{label_217} using $d(r_0)$ by discarding the progress made during OCGM-G instances while resuming ACGM instances instead of restarting them. Specifically, we can replace line~\ref{label_209_acgm} in Algorithm~\ref{label_209} with
\begin{equation}
(r_{j + 1}, \bar{L}_{j + 1}) = \mbox{resume-ACGM}(r_j, r_0, \bar{L}_j, \gamma_d, \gamma_u, T_{j - 1}),
\end{equation}
where $r_j$ is the point from where ACGM resumes its operation, originally started from $r_0$, with all other internal parameters unchanged from the previous ACGM instance. We also must replace line~\ref{label_209_ocgm} in Algorithm~\ref{label_209} with
\begin{equation}
(\bar{r}_{j + 1, i + 1}, \bar{g}_{j + 1, i + 1}) = \mbox{OCGM-G}(r_{j + 1}, \Lmax, T_j),
\end{equation}
as well as \emph{remove} line~\ref{label_209_r} to discard the primal progress made by OCGM-G instances. The resuming of ACGM halves the number of iterations needed to reach a desired convergence guarantee because this guarantee is centered on $r_0$ and not $r_j$. We can interpret this resuming approach as having ACGM run continuously from $r_0$ with OCGM-G instances branching off whenever the ACGM internal iteration counter reaches $T_j$, with each OCGM instance also being run for $T_j$ iterations. Once an OCGM-G instance successfully terminates and outputs the composite gradient mapping, its branch is discarded. Resuming ACGM lowers the bound on $N$ to $\sum_{i = 1}^{j} T_j + T_0 + \sum_{i = 0}^{j - 1} T_j = 3 T_j - 2$ and worst-case result on $g_N$ becomes
\begin{equation} \label{label_218}
\| g_N \|_* \leq \frac{10.65 (3 + b)^2 L_u}{N^2} d(r_0), \quad N = (3 + b) 2^{j + 1} - 2, \quad j \geq 0.
\end{equation}
This approach has better guarantees on problems where the level sets become elongated as we proceed and $d(r_j)$ increases. However, it is wasteful on well-behaved problems because it discards the progress in function residual made by all but the last instance of OCGM-G. Even assuming OCGM-G does not fail, the waste amounts to at least around half the total computational budget.

\section{Simulations} \label{label_219}
With the guarantees stated, we next verify the effectiveness of our complete approach with preliminary simulations. We have tested our quasi-online parameter-free scheme in Algorithm~\ref{label_209} on instances of two popular composite optimization problems found in machine learning and inverse problems: least absolute shrinkage and selection operator (LASSO)~\cite{ref_025} and non-negative least squares (NNLS). The oracle functions for the two problems are listed in Table~\ref{label_220}.

\begin{table}[h]
\label{label_220}
\caption{Oracle functions of the two tested problems}
\begin{center}
\begin{tabular}{lllll} \toprule
& $f({x})$ & $\Psi({x})$ & $f'({x})$ & $\prox_{\tau}({x})$ \\ \midrule
LASSO & $\frac{1}{2}\|{A} {x} - {b} \|_2^2$ & $\lambda_1 \|{x}\|_1$ & ${A}^T({A}{x} - {b})$ & $\mathcal{T}_{\tau \lambda_1} ({x})$ \\[1mm]
NNLS & $\frac{1}{2}\|{A} {x} - {b} \|_2^2$ & $\sigma_{\mathbb{R}_{+}^n}({x})$ & ${A}^T({A}{x} - {b})$ & $({x})_{+}$ \\ \bottomrule
\end{tabular}
\end{center}
\end{table}

The auxiliary functions that appear in Table~\ref{label_220} are the characteristic function of the positive orthant $\sigma_{\mathbb{R}_{+}^n}(x)$, the rectified linear function $(x)_+$ and the shrinkage operator $\mathcal{T}_{\tau}(x)$, respectively defined for all $x \in \mathbb{E}$ and $\tau > 0$ as
\begin{equation}
\begin{gathered}
\sigma_{\mathbb{R}_{+}^n}(x) \defq \left\{ \begin{array}{ll} 0, & x_j \geq 0, \quad j \inn{1}{n} \\ +\infty, & \mbox{otherwise} \end{array} \right., \\
((x)_+)_j \defq \max\{0, x_j\}, \quad j \inn{1}{n}, \\
\mathcal{T}_{\tau}(x)_j \defq \left\{
\begin{array}{ll}
x_j - \tau, & x_j > \tau \\
0, & |x_j| \leq \tau \\
x_j + \tau, & x_j < -\tau
\end{array} \right., \quad j \inn{1}{n}.
\end{gathered}
\end{equation}
Throughout our experiments we have used the standard Euclidean norm and all random variables were sampled independently identically distributed, unless stated otherwise.

For LASSO, matrix $A$ is $m = 500$ by $n = 500$ with entries sampled from $\mathcal{N}(0, 1)$, the standard normal distribution with mean $0$ and variance $1$. Vector $b$ has entries drawn from $\mathcal{N}(0, 9)$, the regularization parameter is $\lambda_1 = 4$ and the starting point $x_0 \in \mathbb{R}^n$ has all entries sampled from $\mathcal{N}(0, 1)$.

For NNLS, matrix ${A}$ is $m = 1000$ by $n = 10000$ and sparse having $10\%$ of entries non-zero, with the locations chosen uniformly random and with values drawn from $\mathcal{N}(0, 1)$. The starting point $x_0$ is sparse and has $10\%$ of entries equal to $4$ at uniformly random locations. Vector $b$ was generated using $b = A x_0 + e$, where $e$ is dense with entries drawn from $\mathcal{N}(0, 1)$.

We have tested the original ACGM in \cite{ref_009}, the Guess and Check Accumulative Regularization approach in \cite{ref_013} (denoted as GCAR) calling instances of ACGM on the appropriately regularized objective, our meta-scheme in Algorithm~\ref{label_209} employing FISTA~\cite{ref_002} and FISTA-G~\cite{ref_014} instead of ACGM and OCGM-G (denoted by FISTA + FISTA-G) as well as our Algorithm~\ref{label_209} calling ACGM and OCGM-G as per original specification (referred to as ACGM + OCGM-G). We left out from our simulations the resuming variant of ACGM + OCGM-G because it performed worse than standalone ACGM in all our experiments.

To produce a fair comparison between the tested approaches, wherein FISTA + FISTA-G is problem parameter dependent, we have supplied the correct value of the Lipschitz constant to all schemes. The schemes that feature adaptivity, either stand-alone or part of a meta-scheme use the line-search parameters $\gamma_d = 0.9$ and $\gamma_u = 2.0$, as employed and argued in \cite{ref_009}. The target gradient mapping norm was set to $\epsilon = 10^{-8} \|g_1\|_2$, where $g_1 = L (x_0 - T_L(x_0))$. The parameter $\epsilon$ was supplied to GCAR because its implementation cannot function without it whereas all other tested schemes only used $\epsilon$ as the termination criterion. The convergence behavior of the tested methods is shown on LASSO in Figure~\ref{label_221} and on NNLS in Figure~\ref{label_222}. For each problem, we have plotted the gradient mapping norm as well as the function residual.

\begin{figure}[h]
\subfigure[Convergence in composite gradient norm]{\includegraphics[width=0.48\linewidth]{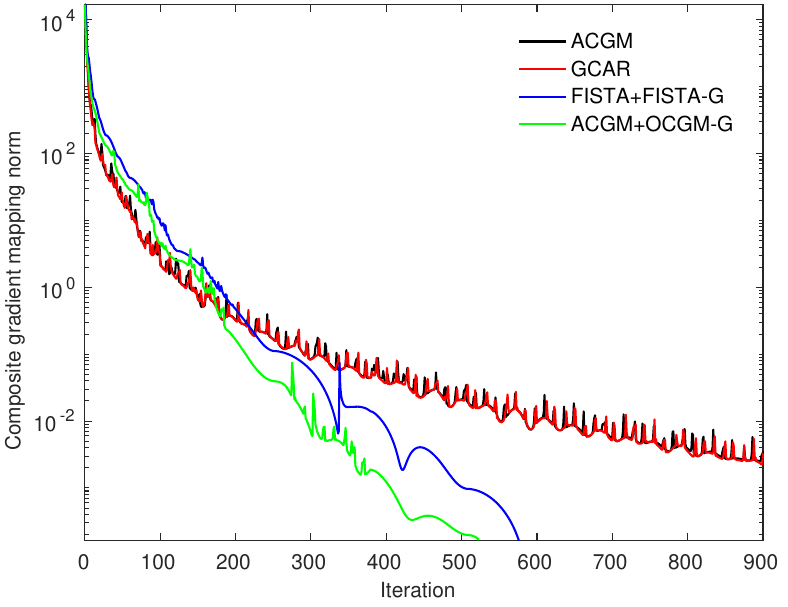}}
\subfigure[Convergence in function residual]{\includegraphics[width=0.48\linewidth]{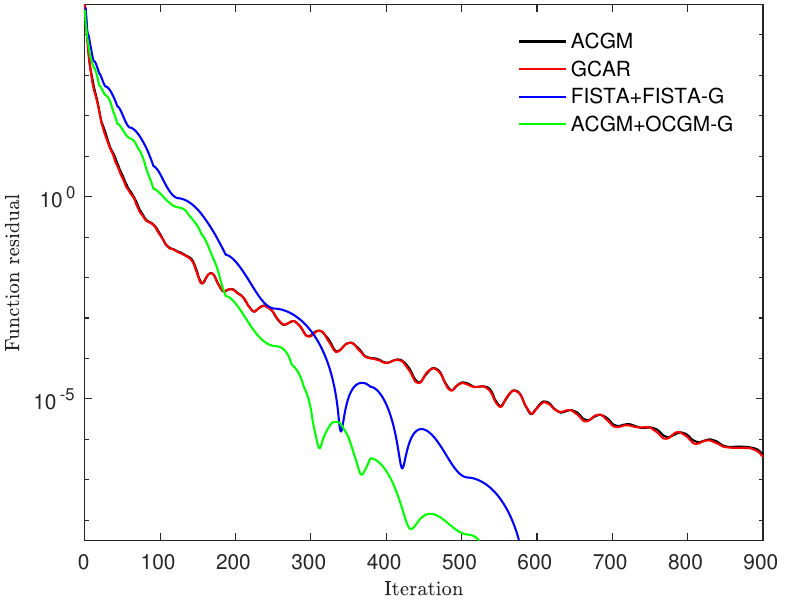}}
\vspace{-0.5em}

\caption{Simulation results on LASSO}
\label{label_221}
\end{figure}

\begin{figure}[h]
\subfigure[Convergence in composite gradient norm]{\includegraphics[width=0.48\linewidth]{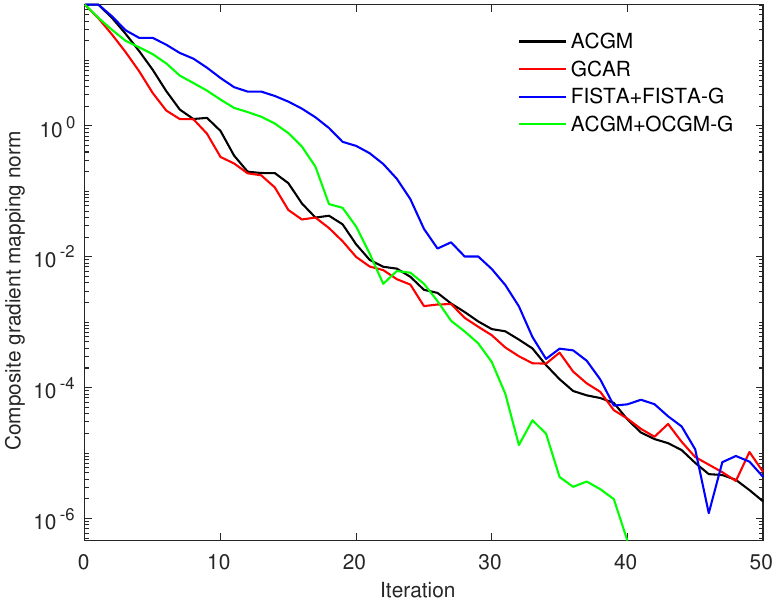}}
\subfigure[Convergence in function residual]{\includegraphics[width=0.48\linewidth]{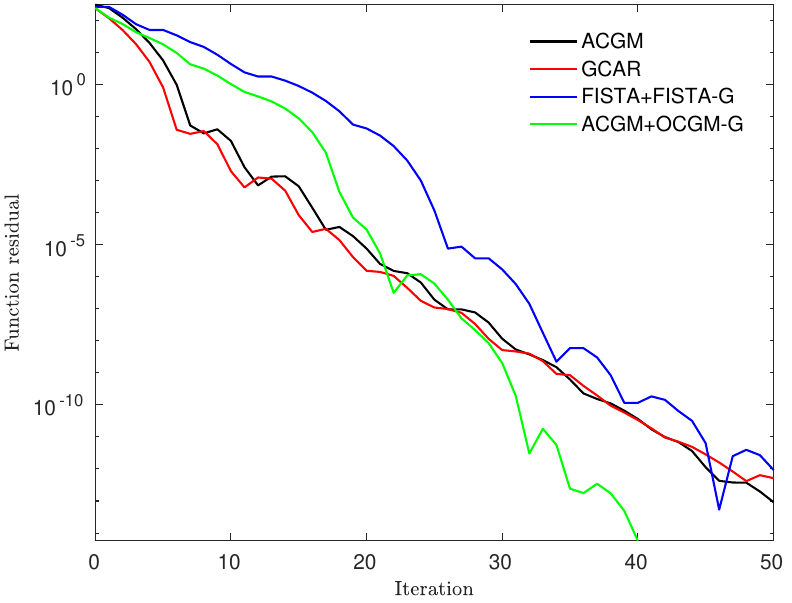}}
\vspace{-0.5em}

\caption{Simulation results on NNLS}
\label{label_222}
\end{figure}

On both problems, the performance of GCAR is almost indistinguishable from ACGM. FISTA + FISTA-G performs worse than ACGM initially on both LASSO and NNLS but manages to overtake ACGM in the higher accuracy regime on LASSO and to match ACGM during the last iterations on NNLS. The adaptivity in our ACGM + OCGM-G ensures that the initial lag with respect to ACGM on both problems is overcome in the high accuracy regime where our approach outperforms all others. The reasons for the poor initial progress made by the approaches based on our meta-scheme is the frequent restarting as well as the fact that OCGM-G and FISTA-G focus on the last iterate. There is no major difference between the convergence behavior for gradient mapping norm and the one for function residual for any tested method on either problem, aside from the function residual convergence begin slightly more stable. The gradient mapping norm jitter is to be expected because ACGM was designed to minimize the function residual whereas the optimized schemes focus on minimizing an averaged gradient mapping most of the time, which allows for large variations in successively outputted gradient mappings.

\section{Discussion} \label{label_223}

In this work we have introduced a simple template for gradient norm minimization given an initial function residual in Algorithm~\ref{label_021}. Unlike Performance Estimation~\cite{ref_004,ref_024} derived approaches, our template is simple and human readable. Simplicity lies in the small number of arithmetic operations performed during each iteration, with division notably lacking. We only use two stand-alone positive weight sequences $(a_k)_{k \inn{1}{T-1}}$ and $(b_k)_{k \inn{1}{T-1}}$. Interestingly, unlike traditional accelerated schemes such as FGM, our template does not employ weight accumulators in its proposed form. Aside from the mandatory oracle point sequence $(y_k)_{k \inn{1}{T}}$, we use the closely related x-point sequence $(x_k)_{k \inn{0}{T}}$ and a memory sequence $(s_k)_{k \inn{0}{T - 1}}$, both objects commonly found in accelerated first-order methods.

The human readability stems not just from the simplicity of the updates, but also from the clear interpretation of all quantities involved. The weights $a_k$ accumulate gradients in $s_k$, and themselves accumulate forward into $A_k$, $k \geq 1$, starting from $A_0 > 0$ to form the final guarantee $A_{T - 1}$ on $\|g_T\|_*^2$ (see \eqref{label_084} and \eqref{label_139}). The direction in which the algorithm progresses is given by $s_k$ with step size $b_k$. The weights $b_k$ also accumulate in reverse, starting with $B_{T, T} = 0$, into the guarantees $B_{k, T}$, $k \inn{1}{T - 1}$, on $\|s_k\|_*^2$ at \emph{runtime}, as given by Propositions \ref{prop:midway-smooth} and \ref{prop:midway-comp}.

The template itself does not employ any inequalities. Even so, without any knowledge of the problem we were able to obtain the sequence $(v_k)_{k \inn{0}{T - 1}}$, initially proposed in \cite{ref_014}, that is updated additively and whose final term matches the last iterate (see \eqref{label_047}). Combining inequalities that describe the problem class under the assumptions of our template is sufficient for the design of optimal schemes for gradient norm minimization. Summing up the inequalities using the weights $a_k$, a technique previously applied to the problem of function residual minimization (see \cite{ref_005} and references therein), followed by the application of the problem-independent properties of our template in Section~\ref{label_062} \emph{directly} yields the optimal scheme for each problem class studied in this work.

We have first shown on smooth unconstrained minimization that applying the inequality describing this problem class \eqref{label_002} under our framework recovers OGM-G~\cite{ref_011}, in the process providing a human readable analysis and pointing out the results missing from the previous attempt in \cite{ref_003}. Our template provides four distinct but equivalent formulations. Instantiating each with the our OGM-G parameters proves the equivalence between the previous formulations of OGM-G in \cite{ref_011}, \cite{ref_014} and \cite{ref_003}, with the necessary corrections in the case of the latter.

Applying the well-known inequality for composite minimization in the proximal gradient setup in \eqref{label_009} produces OCGM-G, an algorithm that allows for adaptivity at runtime. However, the implementation of line-search at runtime while maintaining an optimal rate remains an interesting open problem.
We have therefore limited ourselves to implementing a guess-and-check parameter-free approach in OCGM-G. Removing parameter estimation from OCGM-G recovers FISTA-G in \cite{ref_014}. We provide a means of determining the worst-case rate of OCGM-G with progressively increasing accuracy in Proposition~\ref{label_190} and the highest known global constant on this class in \eqref{label_193}. The four equivalent forms of the template also apply to the composite setup and we provide extrapolated as well as one and two auxiliary sequence reformulations of OCGM-G. Oracle points in OCGM-G can be obtained by extrapolating x-points, a fact derived from our framework, as opposed to FISTA-G whose derivation relies on this being assumed beforehand.

The formulation and convergence analysis of the optimized methods OGM-G and OCGM-G is greatly simplified by the introduction of the sequence $(t_k)_{k \inn{1}{T}}$, defined as $t_k = A_k / a_k$ (see \eqref{label_180}). This is unsurprising, seeing how the first accelerated method ever proposed~\cite{ref_018} was written and analyzed using this sequence exclusively. Our template reveals that the $(\theta_{k,T})_{k \inn{0}{T-1}}$ sequence originally described in \cite{ref_011} is merely a shifted version of $t_k$. Indeed, \eqref{label_095} and \eqref{label_096} imply that $t_k = \theta_{k - 1, T}$, $k \inn{1}{T}$. Whereas in the case of function residual minimization, the sequence increases by at least $1/2$ per term, when minimizing the gradient norm, this sequence \emph{decreases} in the same manner. Writing OCGM-G in extrapolated form using the $t_k$ exclusively produces a simple formulation that closely resembles that of the FGM in \cite{ref_018} and FISTA in \cite{ref_002}, as shown in Figure~\ref{label_208}. The similarity extends to the auxiliary sequence form in Algorithm~\ref{label_167}.

Previous attempts at explaining optimized schemes for minimizing the gradient norm were only able to formulate a guarantee on the last iterate. It was unclear what exactly these methods minimize at runtime. In this work we finally provide an answer by introducing the averaged gradient $\bar{s}_k$, obtained directly from $s_k$ through normalization. We have shown that $\|\bar{s}_k\|^2_*$ converges with a provable $\mathcal{O}(1/k^2)$ rate at every iteration except the last one for both OGM-G in Proposition~\ref{label_127} and OCGM-G in Proposition~\ref{prop:avgrad-comp}. Most of the time, the rate constant higher for the averaged gradient and decreases to that of the last iterate gradient towards the last iterations.

The runtime guarantees ensure that, just like FGM, the optimized methods produce function residuals no worse than at the starting state, even if terminated early. This fact has allowed us to formulate a meta-scheme calling OCGM-G. Algorithm~\ref{label_209} cycles through one instance of ACGM followed by one more instances of OCGM-G, the output of every call being fed as input to the next. When OCGM-G is successful, a new cycle begins, providing our meta-scheme with an optimal rate on the composite gradient norm expressed in terms of the distance to optimum of the cycle starting point.

When the level sets are elongated, it is possible to modify Algorithm~\ref{label_209} to have exponentially longer instances of OCGM-G branch off a main ACGM process at similarly increasing intervals. While desirable on pathological problems, ignoring the primal progress made by OCGM-G may hinder practical performance.

Notwithstanding, the original formulation of Algorithm~\ref{label_209} calling ACGM and OCGM-G (ACGM + OCGM-G) is currently the most advanced methodology for minimizing the composite gradient norm.
It is superior to GCAR because it has a much better proportionality constant and only estimates one parameter (the Lipschitz constant $L$) instead of two ($L$ and the initial distance to optimum $d(r_0)$), despite both approaches spending around half the computational effort minimizing the function residual of the original objective. Moreover, unlike both GCAR and FISTA-G, ACGM + OCGM-G is a truly parameter-free scheme that doesn't rely on a target accuracy or a number of iterations being specified in advance, thus also exhibiting a quasi-online mode of operation.

Preliminary simulation results confirm the superiority of ACGM + OCGM-G, suggesting that our approach is competitive with existing schemes even when the latter are supplied with accurate values of the problem constants.

\section*{Acknowledgements}

This project has received funding from the European Research Council (ERC) under the European Union's Horizon 2020 research and innovation programme (grant agreement No. 788368). We thank prof. Yurii Nesterov for providing insightful suggestions and comments.

\appendix

\section{Proof of Lemma~\ref{label_195}} \label{label_224}

First, \eqref{label_197} directly follows from the definition of $q$, namely
\begin{equation}
q(x) = \frac{(x - 1)^2}{x} > 0, \quad q'(x) = 1 - \frac{1}{x^2} > 0, \quad x > 1.
\end{equation}
Also from the definition of $q$ we have
\begin{equation} \label{label_225}
q(x) = \frac{(x - 1)^2}{x} = x + \frac{1}{x} - 2, \quad x > 1.
\end{equation}
Using \eqref{label_225} we obtain for all $x > 1$
\begin{equation} \label{label_198_k}
\lim_{\kappa \rightarrow \infty} r(x, \kappa) = \lim_{\kappa \rightarrow \infty} \frac{\left(\frac{\kappa}{2} + x - 1\right)^2}{\kappa^2 x (x - 1)} + \lim_{\kappa \rightarrow \infty} \frac{x (x - 1)}{\kappa^2 \left(\frac{\kappa}{2} + x - 1\right)^2} - \lim_{\kappa \rightarrow \infty} \frac{2}{\kappa^2} = \frac{1}{4 x (x - 1)}.
\end{equation}
Likewise,
\begin{equation} \label{label_227}
r(\kappa, 2) = \frac{1}{4} q\left(\frac{x^2}{x (x-1)}\right) \overset{\eqref{label_225}}{=}\frac{1}{4} \left( \frac{x}{x - 1} + \frac{x - 1}{x} - 2 \right) = \frac{1}{4 x (x - 1)}, \quad x > 1.
\end{equation}
Combining \eqref{label_198_k} and \eqref{label_227} gives \eqref{label_198}.

We define the variables $x_1 \defq 2 (x - 1)$ and $x_2 \defq 2 x x_1 = 4 x (x - 1)$ for all $x > 1$. Our assumption $\kappa \geq 2$ is equivalent to $\kappa \geq \frac{x_2}{x_1} - x_1$ which is in turn equivalent to $(\kappa + x_1)^2 - x_2 \geq \kappa (\kappa + x_1)$. The right-hand side is positive and dividing both sides by $\kappa(\kappa + x_1)$ and taking the square we get $\frac{((\kappa + x_1)^2 - x_2)^2}{\kappa^2 (\kappa + x_1)^2} \geq 1$. We thus have
\begin{equation}
r(x, 2) \overset{\eqref{label_198}}{=} \frac{1}{x_2} \leq \frac{\left(\frac{(\kappa + x_1)^2}{x_2} - 1\right)^2}{\kappa^2 \frac{(\kappa + x_1)^2}{x_2}} \overset{\eqref{label_196}}{=} \frac{1}{\kappa^2} q\left( \frac{(\kappa + x_1)^2}{x_2} \right) \overset{\eqref{label_196}}{=} r(x, \kappa), \quad x > 1.
\end{equation}

From now on we consider the range $x \geq 3$ and $\kappa \geq 2$ unless stated otherwise.
We further introduce the variables $\kappa_1 \defq \frac{\kappa}{2} - 1$ and $\kappa_2 \defq \frac{\kappa_1^2}{\kappa - 1}$.
Refactoring the expression for $\bar{r}(x, \kappa)$ in \eqref{label_196} using $\kappa_1$ and $\kappa_2$ yields
\begin{equation} \label{label_228}
\bar{r}(x, \kappa) = (x - 1) (x - 3) \frac{\left(\frac{(x + \kappa_1)^2}{x (x - 1)} - 1 \right)^2}{\frac{(x + \kappa_1)^2}{x (x - 1)}}
= \frac{(\kappa - 1)^2}{\kappa^2} \frac{(x - 3)(x + \kappa_2)^2}{x (x + \kappa_1)^2}.
\end{equation}
The partial derivative w.r.t. $x$ in \eqref{label_228} is given by
\begin{equation} \label{label_229}
\frac{\partial}{\partial x} \bar{r}(x, \kappa) = \frac{(\kappa - 1)^2}{\kappa^2} \frac{x + \kappa_2}{x^2 (x + \kappa_1)^3} \bar{q}(x, \kappa).
\end{equation}
where $\bar{q}(x, \kappa) \defq (2 \kappa_1 - 2 \kappa_2 + 3) x^2 + (9 \kappa_2 - 3 \kappa_1) x + 3 \kappa_1 \kappa_2$. The positivity of $\kappa$ implies $\kappa_1 > \kappa_2$ and hence $2 \kappa_1 - 2 \kappa_2 + 3 > 0$.
The discriminant of the equation $\bar{q}(x, \kappa) = 0$ in $x$ is given by
\begin{equation} \label{label_230}
\mathcal{D}(\kappa) \defq (9 \kappa_2 - 3 \kappa_1)^2 - 12 \kappa_1 \kappa_2 (2 \kappa_1 - 2 \kappa_2 + 3).
\end{equation}
We can substitute $\kappa_2$ out of \eqref{label_230} thereby obtaining
\begin{equation} \label{label_231}
\mathcal{D}(\kappa) = \frac{3 \kappa_1^2}{(2 \kappa_1 + 1)^2} \bar{\mathcal{D}}(\kappa_1), \quad \bar{\mathcal{D}}(\kappa_1) \defq 3 - 18 \kappa_1 - 29 \kappa_1^2 - 8 \kappa_1^3.
\end{equation}
The derivative $\bar{\mathcal{D}}'(\kappa_1) = -18 - 58 \kappa_1 - 24 \kappa_1^2$ is negative for all $\kappa_1 \geq 0$ implying that $\bar{\mathcal{D}}$ is strictly decreasing on this range.

We distinguish two cases. When $\kappa = 2$, \eqref{label_228} implies $\bar{r}(x, \kappa) = \frac{(\kappa - 1)^2}{\kappa^2} \frac{(x - 3)}{x}$, a strictly increasing function in $x$.
When $\kappa \geq 3$, we have $\kappa_1 \geq 0.5$ and $\bar{\mathcal{D}}(0.5) \approx -2.6719 < 0$ and hence the discriminant is negative. The partial derivative in \eqref{label_229} is positive, completing the proof of \eqref{label_200}.
\end{document}